\def\@abssec#1{\vspace{.05in}\footnotesize \parindent .2in
{\bf #1. }\ignorespaces}
\newtheorem{theorem}{Theorem}[section]
\newtheorem{lemma}[theorem]{Lemma}
\newtheorem{proposition}[theorem]{Proposition}
\newtheorem{definition}[theorem]{Definition}
\newtheorem{remark}[theorem]{Remark}
\newcounter{hypo}
\DeclareMathOperator{\divg}{div}
\newcommand{\R}{\ensuremath{\mathbb{R}}}
\newcommand{\RR}{\ensuremath{\mathcal{R}}}
\newcommand{\N}{\ensuremath{\mathbb{N}}}
\newcommand{\LL}{\ensuremath{\mathcal{L}}}
\newcommand{\T}{\ensuremath{\mathbb{T}}}
\newcommand{\beq}[1]{\begin{equation}\label{#1}}
\newcommand{\eeq}{\end{equation}}
\newcommand{\beqs}{\begin{equation*}}
\newcommand{\eeqs}{\end{equation*}}
\newcommand{\set}[1]{\left\{#1\right\}}
\newcommand{\tend}{\bibliographystyle{plain}\bibliography{ccrituniq}\end{document}}
\newcommand{\dd}{\mathrm{d}}
\allowdisplaybreaks \numberwithin{equation}{section}
\begin{document}

\title[Regularity issues of the drift-diffusion equation with nonlocal diffusion]{On the regularity issues of a class of drift-diffusion equations with nonlocal diffusion}
\author{Changxing Miao}
\address{Institute of Applied Physics and Computational Mathematics, P.O. Box 8009, Beijing 100088, P.R. China.}
\email{miao\_{}changxing@iapcm.ac.cn}
\author{Liutang Xue}
\address{Laboratory of Mathematics and Complex Systems (MOE), School of Mathematical Sciences, Beijing Normal University, Beijing 100875, P.R. China}
\email{xuelt@bnu.edu.cn}
\subjclass[2010]{76D03, 35Q35, 35Q86.}
\keywords{Drift-diffusion equation, surface quasi-geostrophic equation, nonlocal maximum principle, L\'evy operator, regularity.}
\date{}
\maketitle

\begin{abstract}
  In this paper we address the regularity issues of drift-diffusion equation with nonlocal diffusion,
where the diffusion operator is in the realm of stable-type L\'evy operator and the velocity field is defined from the considered quantity by a zero-order pseudo-differential operator.
Through using the method of nonlocal maximum principle in a unified way, we prove the eventual regularity result in the supercritical type cases and the global regularity at some logarithmically supercritical cases.
The feature of these results is that the time after which the solution is smoothly regular in the supercritical type cases can be evaluated explicitly.
\end{abstract}

\section{Introduction}
We consider the Cauchy problem of the following drift-diffusion equation with nonlocal diffusion
\begin{equation}\label{DDeq}
  \partial_t \theta + u\cdot\nabla \theta +\mathcal{L}\theta = 0,\qquad \theta|_{t=0}(x)=\theta_0(x),
\end{equation}
where $x\in \R^d$ (or $\T^d$), $d\in \N^+$, $t\in\R^+$, $\theta:\R^+\times\R^d\rightarrow \R$ is a scalar-valued quantity, and the velocity field $u=\mathcal{P}(\theta):\R^+\times\R^d\rightarrow \R^d$ is defined from $\theta$
by the zero-order pseudo-differential operator:
\begin{equation}\label{uexp}
  u(x)=\mathcal{P}(\theta)(x)= a \,\theta(x) + \; \mathrm{p.v.}\int_{\R^d} S(y)\, \theta(x+y)\,\mathrm{d}y,
\end{equation}
with $a=(a_1,\cdots,a_d)\in \R^d$, and $S(x)=\frac{\Psi( x/|x|)}{|x|^d}=\left(\frac{\Psi_1(x/|x|)}{|x|^d},\cdots,\frac{\Psi_d(x/|x|)}{|x|^d}\right)\in C\left( \R^d\setminus \{0\}; \R^d\right)$
composed of Calder\'on-Zygmund kernels (\cite{Stein}).
The nonlocal diffusion operator $\mathcal{L}$ is given by
\begin{equation}\label{Lexp}
  \mathcal{L}f(x) =  \, \mathrm{p.v.}\int_{\R^d} \big(f(x)-f(x+y)\big)\, K(y)\,\mathrm{d}y,
\end{equation}
where the radially symmetric kernel function $K(y)=K(|y|)$ defined on $\R^d\setminus\{0\}$ satisfies that there exist some $\alpha\in ]0,1]$, $\tilde\alpha>0$
and $c_0>0$ ($c_0$ may be dependent on $\alpha$ and $\sigma$), $c_1\geq 1$ such that
\begin{equation}\label{Kcond1}
  c_1^{-1}\frac{m(|y|^{-1})}{|y|^d}\leq K(y) \leq c_1 \frac{m(|y|^{-1})}{|y|^d},\quad \forall 0 <|y| \leq c_0,\quad \textrm{and}
\end{equation}
\begin{equation}\label{Kcond2}
  \;\;0\leq K(y) \leq \frac{ c_1 }{|y|^{d+\tilde\alpha}},\qquad \;\; \forall |y|\geq c_0,
\end{equation}
with $m(y)= m(|y|)$ a radially symmetric function satisfying the following assumptions
\begin{enumerate}[(\textbf{A}1)]
\item
  \textrm{$m(|y|)$ is smooth away from zero, non-decreasing, with $m(0)=0$, $\lim_{|y|\rightarrow \infty} m(|y|)=\infty$};
\item there exists $\sigma\in [0,\alpha[$ such that
\begin{equation}\label{mDec}
  (\alpha-\sigma)\frac{m(|y|)}{|y|} \leq m'(|y|) \leq \alpha \frac{m(|y|)}{|y|},\quad \forall |y|\geq c_0^{-1}.
\end{equation}
\end{enumerate}
In some cases concerned, the condition \eqref{Kcond2} can be replaced by a more general condition
\begin{equation}\label{Kcond5}
  - \frac{c_1}{|y|^{d+\tilde\alpha}}\leq K(y) \leq \frac{c_1}{|y|^{d+\tilde\alpha}},\quad \forall |y|\geq c_0.
\end{equation}
Besides, we also consider the nonlocal operator $\mathcal{L}$ defined by \eqref{Lexp}-\eqref{mDec} with ``$c_0= \infty$", i.e., the kernel $K(y)=K(|y|)$ is given by
\begin{equation}\label{Kcond3}
  c_1^{-1}\frac{m(|y|^{-1})}{|y|^d}\leq K(y) \leq c_1 \frac{m(|y|^{-1})}{|y|^d},\quad \forall\, |y|>0,
\end{equation}
with $c_1\geq 1$ and $m(y)=m(|y|)$ satisfying (\textbf{A}1) and
\begin{enumerate}[(\textbf{A}3$)$]
\item there exists a constant $\sigma\in [0,\alpha[$ such that
\begin{equation}\label{mDec2}
  (\alpha-\sigma)\frac{m(|y|)}{|y|} \leq m'(|y|) \leq \alpha \frac{m(|y|)}{|y|},\quad \forall |y|> 0.
\end{equation}
\end{enumerate}

The diffusion operator \eqref{Lexp} defined above is in the realm of L\'evy operator; indeed, according to \eqref{mDec} and Lemma \ref{lem-mfact} below,
we deduce that for $\alpha\in ]0,1]$ and $\sigma\in [0,\alpha[$,
\begin{equation}
  c_0^{\alpha-\sigma}m(c_0^{-1})\, \frac{1}{|y|^{\alpha-\sigma}} \leq m(|y|^{-1})\leq c_0^{\alpha}m(c_0^{-1})\, \frac{1}{|y|^{\alpha}},\quad \forall 0<|y|\leq c_0,
\end{equation}
which leads to
\begin{equation}\label{Kcond4}
  c_1^{-1}c_0^{\alpha-\sigma}m(c_0^{-1})\, \frac{1}{|y|^{d+\alpha-\sigma}}\leq K(y)\leq c_1 c_0^{\alpha}m(c_0^{-1})\, \frac{1}{|y|^{d+\alpha}},\quad \forall 0<|y|\leq c_0,
\end{equation}
and we know that the operator given by \eqref{Lexp} satisfying \eqref{Kcond4} and
$\int_{\R^d} \left( \min\{1,|y|^2\}\right) K(y)\dd y\leq C$
corresponds to the infinitesimal generator of the stable-type L\'evy process (\cite{CSZ}).
By taking the Fourier transform on $\mathcal{L}$, we get
\begin{equation}\label{Lsymb}
  \widehat{\LL \, f} (\zeta) = A(\zeta)  \widehat{f}(\zeta),\quad \forall \zeta\in\R^d,
\end{equation}
where the symbol $A(\zeta)$ is given by the following L\'evy-Khintchine formula (\cite[Eq. 3.217]{Jacob})
\begin{equation}\label{LKf}
  A(\zeta) := \int_{\R^d\setminus \{0\}}\left( 1- \cos(\zeta\cdot y)\right) K(y)\dd y.
\end{equation}

The diffusion operator $\LL$ defined by \eqref{Lexp} under \eqref{Kcond1}-\eqref{Kcond2} or \eqref{Kcond1}, \eqref{Kcond5}
contains a large class of multiplier operators $\LL= m(D)$ such as
$$\LL= |D|^\beta,\; (\beta\in [\alpha-\sigma,\alpha]),\quad\textrm{and}\quad \LL= \frac{|D|^\alpha}{\left(\log(\lambda + |D|)\right)^\mu},\;\left(\alpha\in ]0,1],\mu> 0,\lambda\geq 1\right),$$
which we shall explain in the subsection \ref{subsec-pre1} below.
Among them, an important case, which is also a particular case of $\LL$ under \eqref{Kcond3}-\eqref{mDec2},
is the fractional Laplacian operator $ |D|^\alpha:= (-\Delta)^{\frac{\alpha}{2}}$ with $\alpha\in ]0,1]$, which has the following representation formula
\begin{equation}\label{}
  |D|^\alpha f(x) = c_{d,\alpha}\,\mathrm{p.v.}\int_{\R^d}\frac{f(x) - f(x+y)}{|y|^{d+\alpha}}\dd y,
\end{equation}
with $c_{d,\alpha}>0$. The operator $\LL =|D|^\alpha$ corresponds to the infinitesimal generator of the symmetric stable L\'evy process,
and recently has been intensely studied in many theoretical problems.
For the drift-diffusion equation \eqref{DDeq}-\eqref{uexp} with $\LL=|D|^\alpha$,
we conventionally call the cases $\alpha<1$, $\alpha=1$ and $\alpha>1$
as supercritical, critical and subcritical cases, respectively.

The drift-diffusion equation \eqref{DDeq}-\eqref{uexp} has various physical background from the geophysics, fluid dynamics, dislocation theory and other fields.
The typical examples are the surface quasi-geostrophic equation, the Burgers equation, the C\'ordoba-C\'ordoba-Fontelos equation and the incompressible porous media equation,
and below we will specifically review some noticeable results related to these models. For other interesting models expressed as \eqref{DDeq}-\eqref{uexp},
one can also to \cite{BalC,DL,LMX} etc.

The surface quasi-geostrophic (SQG) equation writes as the equation \eqref{DDeq} with
\begin{equation}\label{vel-SQG}
  d=2 \quad\textrm{and}\quad u= \mathcal{R}^\perp \theta= (-\RR_2,\RR_1)\theta,
\end{equation}
where $\RR_i=\partial_i |D|^{-1}$ ($i=1,2$) is the usual Riesz transform (\cite{Stein}).
The inviscid model (i.e. $\LL=0$) arises from the geostrophic study of the highly rotating fluid (\cite{Ped}),
and partially due to the formal analogue with 3D Euler/Navier-Stokes equations (\cite{CMT}) and its simple form, the SQG equation has received much attention.
For the SQG equation with fractional operator $\LL=|D|^\alpha$, the subcritical case (i.e. $\alpha\in ]1,2]$)
has been known for decades
 that it is globally well-posed for suitably regular data (e.g. \cite{Resn});
while for the subtle critical case (i.e. $\alpha=1$), the issue of global regularity was independently settled by \cite{KNV} and \cite{CV}:
Kiselev et al \cite{KNV} developed an original method
called the ``nonlocal maximum principle"; 
and Caffarelli et al \cite{CV} exploited the De Giorgi's iteration method and a novel extention.
For other different proofs resolving the critical problem, one can refer to \cite{KisN} which uses the duality method,
and \cite{ConV,CTV} which apply the ``nonlinear maximum principle" method.
However, the global regularity issue in the supercritical case remains to be an outstanding open problem.
So far, for the SQG equation with supercritical diffusion (i.e. $\alpha\in ]0,1[$), we only know some partial results:
the local well-posedness result for large data and global well-posedness result under some smallness condition (e.g. \cite{ChenMZ}),
the conditional regularity criterion (e.g. \cite{ConW}), and the eventual regularity of the global weak solution (\cite{Dab,Kis,CzV}).
More precisely, for the eventual regularity issue, which means the global weak solution is smoothly regular after some finite time,
the progress was first made by Dabkowski \cite{Dab} by adapting the duality method of \cite{KisN}
and later achieved by Kiselev \cite{Kis} by using the nonlocal maximum principle method, and one refer to \cite{CzV} for a third proof by applying the method of \cite{CTV}.
Notice that Coti Zelati and Vicol \cite{CzV} also proved a somewhat global result that for $\theta_0\in H^2$ with
$\|\theta_0\|_{L^2}^{\alpha/2}\|\theta_0\|_{\dot H^2}^{1-\alpha/2}\leq R$,
the supercritical SQG equation has a unique global solution as long as $\alpha$ depending on $R$ sufficiently close to $1$.
For the SQG equation with general diffusion operator $\LL$, Dabkowski et al \cite{DKSV} considered \textit{the slightly supercritical case},
where the operator $\LL$ defined by \eqref{Lexp} and \eqref{Kcond3} satisfies \eqref{m-int} below,
and they obtained the global well-posedness of smooth solution by applying the method of nonlocal maximum principle.
They also showed the global result for the multiplier operator $\LL=m(D)$ under some suitable assumptions on $m(\zeta)=m(|\zeta|)$.

The Burgers equation is just the equation \eqref{DDeq} with
\begin{equation}\label{}
d=1,\quad\textrm{and}\quad u=\theta,
\end{equation}
which was studied by Burgers in 1940s as a 1D equation modeling the nonlinearity of 3D Euler/Navier-Stokes equations.
It is known that the inviscid Burgers equation with some smooth data forms the shock singularity at finite time.
For the Burgers equation with fractional diffusion,
the subcritical and critical cases can be treated as the corresponding cases of SQG equation to obtain the global results; while for the supercritical case,
Kiselev et al \cite{KNS} proved that the shock singularity similar to the inviscid case occurs in the supercritical case (see also \cite{DDL,ADV}).
For the Burgers equation with a general $\LL$ defined by \eqref{Lexp} and \eqref{Kcond3}, the authors in \cite{DKSV} proved that under \eqref{m-int} below and other mild conditions on $m$,
the equation is globally well-posed for smooth data; whereas under $\lim_{\nu\rightarrow 0+}\int_\nu^1 m(r^{-1})\dd r <\infty$,
finite time blowup will also happen for some smooth data.

The C\'ordoba-C\'ordoba-Fontelos (CCF) equation corresponds to the equation \eqref{DDeq} with
\begin{equation}\label{}
  d=1,\quad \textrm{and}\quad u=H\theta,
\end{equation}
and $H$ is the usual 1D Hilbert transform.
C\'ordoba et al \cite{CCF} introduced this model as a 1D simple equation of 3D Euler/Navier-Stokes equations which has the nonlocal velocity;
and they proved there exists smooth data so that the inviscid CCF equation forms singularity at finite time.
For the CCF equation with fractional diffusion, Dong \cite{Dong} considered the subcritical and critical cases, and showed the global results,
while in the supercritical case with $\alpha\in ]0,1/2[$, Li et al \cite{LiR}
showed there is an occurrence of finite-time blowup similar to the inviscid case.
Up to now, the problem concerning the global regularity of solution for the supercritical CCF equation with $\alpha\in [1/2,1[$ is still open.
We mention that Do \cite{Do} proved the eventual regularity of the limit function of regularized solution $\theta^\epsilon$ at the supercritical case $\alpha\in ]0,1[$
by applying the method of \cite{Kis}, and also obtained the global well-posedness result of CCF equation at slightly supercritical cases equipped with smooth data.

The incompressible porous media equation is the equation \eqref{DDeq} with the following velocity field
\begin{equation}\label{}
  u=\nabla p + \theta e_d,\qquad \divg u =0,
\end{equation}
where $p$ is a scalar quantity and $e_d$ is the last canonical vector of $\R^d$. By a direct computation, we can show that the velocity $u$ can be exactly expressed as \eqref{uexp},
e.g., for $d=2$ (\cite{CGO}),
\begin{equation*}
  a=\left(0,-\frac{1}{2}\right),\quad S(x)=\frac{1}{2\pi}\left(\frac{2 x_1x_2}{|x|^4}, \frac{x_2^2 -x_1^2}{|x|^4} \right),
\end{equation*}
and for $d=3$ (\cite{CCGO}),
\begin{equation*}
  a=\left(0,0,-\frac{2}{3}\right),\quad S(x)=\frac{1}{4\pi}\left(\frac{3 x_1x_3}{|x|^5}, \frac{3 x_2 x_3}{|x|^5},\frac{2x_3^2 -x_1^2 -x_2^2}{|x|^5} \right).
\end{equation*}
In \cite{CCGO,CGO}, C\'ordoba et al, among other issues, proved the global well-posedness result for the equation in the subcritical and critical cases.
Similarly as the SQG equation, the issue of global regularity in the supercritical case remains unsolved.

In this paper we focus on the drift-diffusion equation \eqref{DDeq}-\eqref{uexp} with general $\LL$ defined by \eqref{Lexp},
and we mainly are concerned with the following cases
\begin{align}
  &\textrm{Case (I): $\big(K(y),m(y)\big)$ satisfies \eqref{Kcond3} and (\textbf{A}1), (\textbf{A}3)}; \label{case3} \\
  &\textrm{Case (II): $\big(K(y),m(y)\big)$ satisfies \eqref{Kcond1}-\eqref{Kcond2} and (\textbf{A}1)-(\textbf{A}2)};\label{case2}  \\
  &\textrm{Case (III): $\big(K(y),m(y)\big)$ satisfies \eqref{Kcond1}, \eqref{Kcond5} and (\textbf{A}1)-(\textbf{A}2), symbol $A(\zeta)\geq 0$, $\divg u=0$}. \label{case1}
\end{align}
By applying the method of nonlocal maximum principle in a unified way,
we show the eventual regularity of global weak solution for the supercritical type equation \eqref{DDeq}-\eqref{uexp} at \textrm{Case (I)}.
Compared with the eventual result obtained in \cite{Kis} for the supercritical SQG equation, we have an explicit control on the eventual regularity time (i.e., the time after which the solution is regular)
which is small enough as $\sigma\rightarrow 0$, $\alpha=1$.
In accordance with this point, we further prove the global regularity result for the logarithmically supercritical drift-diffusion equation \eqref{DDeq}-\eqref{uexp} at either \textrm{Case (II)} or \textrm{Case (III)}.

More precisely, our first main result is the eventual regularity of the vanishing viscosity weak solution for the drift-diffusion equation \eqref{DDeq}-\eqref{uexp}.
\begin{theorem}\label{thmevRe}
  Assume that \textrm{Case (I)} is considered with $\alpha\in ]0,1]$, $\sigma\in[0,1[$, $\theta_0\in L^2(\R^d)$ and $\divg u=0$.
Then for every $T>0$ large, the drift-diffusion equation \eqref{DDeq}-\eqref{uexp} admits a weak solution $\theta \in L^\infty([0,T]; L^2(\R^d))\cap L^2([0,T];\dot H^{\frac{\alpha-\sigma}{2}}(\R^d))$,
which satisfies $\theta\in C^\infty(]t_0+t_1,T]\times \R^d)$,
where $t_0>0$ can be chosen arbitrarily small and $t_1>0$ is a number depending only on
$\alpha$, $\sigma$, $d$, $t_0$ and $\|\theta_0\|_{L^2}$.

Moreover, if $\alpha\in ]0,1[$ and $\sigma=0$ in condition (\textbf{A}3), i.e., $m(y)\equiv C_0 |y|^\alpha$, $\forall y\neq 0$, we can set $T=\infty$, and we explicitly have
\begin{equation}\label{t1-bdd}
  t_1\leq \frac{C}{\alpha} \left( C 2^{d/\alpha} t_0^{-1} \right)^{\frac{d}{2(1-\alpha)}} \Big( \frac{C(1-\alpha)}{\alpha^5}\Big)^{\frac{\alpha}{1-\alpha}}
  \|\theta_0\|_{L^2}^{\frac{\alpha}{1-\alpha}},
\end{equation}
with $C>0$ some constant depending only on $d$.
\end{theorem}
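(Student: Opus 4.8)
The plan is to derive $\THM{thmevRe}$ from $\PROP{prop-evRe}$ in four steps: regularize the $L^2$ datum to $L^\infty$ in an arbitrarily short time and \emph{uniformly in} $\epsilon$; feed the resulting $L^\infty$ bound into $\PROP{prop-evRe}$ to obtain a uniform-in-$\epsilon$ eventual $\dot C^\beta$ estimate; bootstrap this to $C^{1,\nu}_x$; and pass to the vanishing viscosity limit. To start, fix $\epsilon>0$ and observe that the parabolic regularized equation \eqref{reDDeq} with datum $\theta_0\in L^2$ has a unique solution $\theta^\epsilon\in C([0,\infty[;L^2(\R^d))\cap C(]0,\infty[;H^\infty(\R^d))$: local existence in $H^s$, $s>1+\tfrac d2$, is routine once one notes that $u=\mathcal P(\theta)$ is a zero-order operator bounded on $H^s$, while the $-\epsilon\Delta$ term makes the flow instantaneously smooth and therefore global; and since $\divg u=0$ and $A(\zeta)\ge 0$ in \textrm{Case (III)}, the energy identity yields $\|\theta^\epsilon(t)\|_{L^2}\le\|\theta_0\|_{L^2}$ for all $t\ge 0$.

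The crux is the uniform-in-$\epsilon$ smoothing from $L^2$ to $L^\infty$. Here the gain must be extracted from the genuine nonlocal diffusion $\LL$ — whose symbol obeys $A(\zeta)\gtrsim m(|\zeta|)\gtrsim|\zeta|^{\alpha-\sigma}$ for $|\zeta|$ large by \eqref{mDec2} — and \emph{not} from the auxiliary term $-\epsilon\Delta$, so that the estimate does not deteriorate as $\epsilon\to 0$. Combining this coercivity with $\divg u=0$ and iterating an $L^p$-energy estimate in $p$ (equivalently, running a De Giorgi iteration) on \eqref{reDDeq}, one obtains, for any fixed $t_0>0$, a bound
\beq{aux-evre-Linfty}
  \|\theta^\epsilon(t_0)\|_{L^\infty(\R^d)}\le \Phi(t_0,\|\theta_0\|_{L^2},d,\alpha,\sigma)
\eeq
independent of $\epsilon$; when $\sigma=0$ and $m(|y|)\equiv C_0|y|^\alpha$ one tracks every constant and arrives at a $\Phi$ of the form $(C\,2^{d/\alpha}t_0^{-1})^{d/(2\alpha)}\|\theta_0\|_{L^2}$. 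Applying $\PROP{prop-evRe}$ to $\theta^\epsilon$, with the initial time shifted to $t_0$ and the initial datum taken to be $\theta^\epsilon(t_0)\in L^\infty\cap H^s$, then produces a time $t_1$ — depending only on $\|\theta^\epsilon(t_0)\|_{L^\infty}$ and $d,\alpha,\sigma$, hence by \eqref{aux-evre-Linfty} only on $t_0,\|\theta_0\|_{L^2},d,\alpha,\sigma$ — such that for every $\beta\in ]1-\alpha+\sigma,1[$,
\beq{aux-evre-Cbeta}
  \sup_{t\ge t_0+t_1}\|\theta^\epsilon(t)\|_{\dot C^\beta(\R^d)}\le C(t_0,\|\theta_0\|_{L^2},d,\alpha,\beta,\sigma),
\eeq
again uniformly in $\epsilon$.

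Next I would bootstrap and pass to the limit. Since $u=\mathcal P(\theta)$ is built from zero-order Calder\'on--Zygmund operators, one has $\|u(t)\|_{\dot C^\beta}\lesssim\|\theta(t)\|_{\dot C^\beta}$; as $\beta>1-\alpha+\sigma$, i.e.\ $\beta+(\alpha-\sigma)>1$, the drift $u\cdot\nabla\theta$ is of lower order than the effective diffusion, so nonlocal parabolic Schauder (De Giorgi--Nash--Moser) estimates improve \eqref{aux-evre-Cbeta} in finitely many steps to a uniform-in-$\epsilon$ bound $\sup_{t\in[t_0+t_1,T]}\|\theta^\epsilon(t)\|_{C^{1,\nu}(\R^d)}\le C$ for some $\nu>0$ (any short initial delay being absorbed into $t_1$). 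Together with the uniform $L^2$ and $L^\infty$ bounds and the bound on $\partial_t\theta^\epsilon$ read off from the equation, the Aubin--Lions lemma yields a subsequence converging, as $\epsilon\to 0$, to a weak solution $\theta$ of \eqref{DDeq}--\eqref{uexp} on $[0,T]$: the nonlinear term passes to the limit because $\mathcal P(\theta^\epsilon)\to\mathcal P(\theta)$ in $L^2_{\mathrm{loc}}$, and $\epsilon\Delta\theta^\epsilon\to 0$ in $\mathcal D'$ thanks to the uniform $C^{1,\nu}$ bound, while $\theta$ inherits the $C^{1,\nu}_x$ regularity on $]t_0+t_1,T]$. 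As $t_0>0$ is arbitrary this is the assertion, with $T$ any number $>t_0+t_1$. In the pure-power case $\PROP{prop-evRe}$ furnishes the $\dot C^\beta$ control on all of $[t_1,\infty[$, so one may take $T=\infty$; and inserting the explicit $\Phi$ above into the explicit estimates \eqref{t1-bdd2}--\eqref{Cbet-bdd} of $\PROP{prop-evRe}$ — in which $t_1$ scales like the $L^\infty$ datum to the power $\tfrac{\alpha}{1-\alpha}$ — reproduces \eqref{t1-bdd}, the exponent $\tfrac{d}{2(1-\alpha)}$ appearing as $\tfrac{d}{2\alpha}\cdot\tfrac{\alpha}{1-\alpha}$.

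I expect the main obstacle to be precisely the uniform-in-$\epsilon$ estimate \eqref{aux-evre-Linfty}: one has to obtain the $L^2\to L^\infty$ gain from $\LL$ alone — whose coercivity is only of the degraded order $\alpha-\sigma$ for a general admissible $m$ — and, in the pure-power case, to do so with fully explicit constants so as to recover the quantitative bound \eqref{t1-bdd}. By comparison the Schauder bootstrap and the passage to the limit are comparatively standard.
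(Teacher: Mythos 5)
Your proposal follows essentially the same route as the paper's proof: a uniform $L^2$ energy estimate exploiting the coercivity of $\LL$ (Lemma \ref{lem-symb}), a De Giorgi iteration for the uniform-in-$\epsilon$ $L^\infty$ bound at time $t_0$, Proposition \ref{prop-evRe} applied to the time-shifted regularized solution for the eventual H\"older estimate, the regularity criterion (Lemma \ref{lem-RC}) to bootstrap to $C^{1,\nu}_x$, compactness to pass to the vanishing-viscosity limit, and in the pure-power case the same composition of $\|\theta(t_0)\|_{L^\infty}\le (C2^{d/\alpha}t_0^{-1})^{d/(2\alpha)}\|\theta_0\|_{L^2}$ with \eqref{t1-bdd2} yielding \eqref{t1-bdd}. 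The one adjustment worth making is at the very start: rather than solving \eqref{reDDeq} directly from the raw $L^2$ datum (for which local well-posedness of the $\epsilon$-regularized system is not routine when $d\ge 3$, since $L^2$ is supercritical for the heat scaling), the paper also mollifies and truncates the data, $\theta_0^\epsilon=\phi_\epsilon*(\theta_0 1_{B_{1/\epsilon}})\in H^s$, which costs nothing in the uniform estimates since $\|\theta_0^\epsilon\|_{L^2}\le\|\theta_0\|_{L^2}$.
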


Our second result is the global regularity of the solution for some logarithmically supercritical drift-diffusion equations \eqref{DDeq}-\eqref{uexp}.

\begin{theorem}\label{thm:glb2}
  Assume that either \textrm{Case (II)} or \textrm{Case (III)} is considered for $\alpha=1$ and $\sigma\in [0,1[$ with some constant $c_0=c_0(\sigma)>0$.
Additionally suppose that there exist $\mu\in [0,1]$ and $c_2\geq 1$ such that
\begin{equation}\label{mcd1}
  \frac{1}{c_2} \frac{|y|}{(\log |y|)^\mu}\leq m(y) \leq c_2 |y|,\qquad \forall |y| \geq c_2.
\end{equation}
We have the following two statements.
\begin{enumerate}[(1)]
\item
If \textrm{Case (III)} is considered and $\theta_0\in L^2\cap L^\infty(\R^d)$. Then for any $t_*>0$ small and $T>0$ large,
the vanishing viscosity solution $\theta\in L^\infty([0,T];L^2(\R^d))\cap L^2([0,T]; \dot H^{\frac{1-\sigma}{2}}(\R^d))$ of the drift-diffusion equation \eqref{DDeq}-\eqref{uexp} satisfies $\theta\in C^\infty([t_*, T]\times \R^d)$.
\item
If \textrm{Case (II)} is considered, $\theta_0\in C_0(\R^d)$ (i.e., the space composed of continuous functions which decay to zero at infinity) and let $\theta$ be the limit function of $\theta^\epsilon$ which solves the regularized drift-diffusion equation
\begin{equation}\label{reDDeq}
  \partial_t \theta^\epsilon + u^\epsilon\cdot\nabla \theta^\epsilon + \LL \theta^\epsilon - \epsilon \Delta \theta^\epsilon = 0,\quad u^\epsilon = \mathcal{P}(\theta^\epsilon),\quad  \theta^\epsilon|_{t=0}= \phi_\epsilon*\big(\theta_0 1_{B_{1/\epsilon}}\big),
\end{equation}
where $\epsilon>0$, $\phi\in C_c^\infty(\R^d)$ is the standard mollifier, $\phi_\epsilon(x)= \epsilon^{-d}\phi(x/\epsilon)$, and $1_{B_{1/\epsilon}}$ is the indicator function on the ball $B_{1/\epsilon}$.
Then for any $t_*>0$ small, we have $\theta\in C^\infty([t_*,\infty[\times \R^d)$ and $\theta$ on the time period $[t_*,\infty[$ satisfies the drift-diffusion equation \eqref{DDeq}-\eqref{uexp}.
\end{enumerate}
\end{theorem}

The main method in proving the above results is the nonlocal maximum principle originated from \cite{KNV,Kis},
whose basic idea is to show the evolution strictly preserves some appropriate modulus of continuity (abbr. MOC, see Subsection \ref{sec-MOC} below).

In the proof of Theorems \ref{thmevRe} and \ref{thm:glb2}, the following proposition concerned with the uniform-in-$\epsilon$ improvement of the eventual H\"older regularity from the $L^\infty$-estimate plays a core role.

\begin{proposition}\label{prop-evRe}
  Assume that \textrm{Case (I)} is considered with $\alpha\in ]0,1]$, $\sigma\in[0,1[$, and $\theta^\epsilon\in C([0,\infty[; H^s(\R^d))$, $s>1+\frac{d}{2}$
is a smooth solution for the regularized drift-diffusion equation \eqref{reDDeq} with $\epsilon>0$, $\theta_0\in L^\infty(\R^d)$.
Then there exists a time $t_1>0$ independent of $\epsilon$ 
such that for every $\beta \in ]1-\alpha+\sigma,1[$,
\begin{equation}\label{Cbetaest0}
  \sup_{t\in [t_1,\infty[}\|\theta^\epsilon(t)\|_{\dot C^\beta(\R^d)}\leq C(\|\theta_0\|_{L^\infty},d,\alpha,\beta,\sigma),
\end{equation}
with $C $ independent of $\epsilon$.
Moreover, if $\alpha\in ]0,1[$ and $\sigma=0$ in the condition (A3), we have the explicit estimates on $t_1$ and $\sup_{t\in [t_1,\infty[}\|\theta^\epsilon(t)\|_{\dot C^\beta}$
as \eqref{t1-bdd2}-\eqref{Cbet-bdd} below.
\end{proposition}

For the proof of Proposition \ref{prop-evRe}, the new ingredient is the MOC  $\omega(\xi,\xi_0)$ given by \eqref{MOC3.1}-\eqref{MOC3.2},
which is derived from suitably modifying the MOC $\omega(\xi)$ defined by \eqref{MOC2}, 
and by virtue of a careful analysis according to the values of $\xi$ and $\xi_0$,
we manage to show that
the solution $\theta^\epsilon(x,t)$ of the regularized equation \eqref{reDDeq} uniformly-in-$\epsilon$ strictly obeys the MOC $\omega(\xi,\xi_0(t))$,
which combined with the regularity preservation criterion in terms of MOC \eqref{MOC2} (see Lemma \ref{lem-HoldRC}) further guarantees the desired uniform-in-$\epsilon$ H\"older regularity estimate after some time.
We stress that there is no factor like $1-\alpha+\sigma$ or $1-\alpha$
in the conditions of $\kappa,\gamma,\rho$ (see \eqref{rkg-cdsum}) appearing in $\omega(\xi,\xi_0)$,
so that we can estimate the eventual regularity time $t_1$ as \eqref{t1-bdd} in the case $\alpha\in ]0,1[$, $\sigma=0$,
which has the property that $t_1\rightarrow 0$ as $\alpha\rightarrow 1$ for the fixed data $\theta_0$.

For the proof of Theorem \ref{thmevRe}, we first prove the global existence of a vanishing viscosity solution satisfying the $L^2$-energy estimate,
then by using De Giorgi's method we show the crucial $L^\infty_x$-improvement for all $t\geq t_0$ with any $t_0>0$,
and then Proposition \ref{prop-evRe} ensures the eventual H\"older regularity of this weak solution for every $t\geq t_0+t_1$ with some $t_1>0$,
which in combination with the regularity criterion Lemma \ref{lem-RC} further leads to the desired eventual regularity result.

For Theorem \ref{thm:glb2}, we observe that under the condition \eqref{mcd1}, the eventual regularity time $t_1$ can be arbitrarily small,
and thus by applying Proposition \ref{prop-evRe} and by appropriately choosing the coefficients in the MOC $\omega(\xi,\xi_0)$ and $\xi_0=\xi_0(t)$, we can show the desired global regularity result.
Notice that in the considered cases it suffices to justify the criterion \eqref{Targ4} for small $\xi$ and $\xi_0(t)$,
so that we can treat the more general diffusion operator $\LL$ than that in Proposition \ref{prop-evRe}.

Next we list some remarks as follows.

\begin{remark}\label{rmk-glb2}
Since $m(y)=\frac{|y|}{(\log(\lambda+|y|))^\mu}$ with $\mu\in[0,1]$, $\lambda\geq 0$ satisfies \eqref{mDec} with $\alpha=1$, $\sigma\in ]0,1[$, $c_0=e^{-\frac{\mu}{\sigma}}$,
and also satisfies \eqref{mcd1} with $c_2=2$,
thus Theorem \ref{thm:glb2} can be applied to the drift-diffusion equation \eqref{DDeq}-\eqref{uexp} under either \textrm{Case (II)} or \textrm{Case (III)} with these $m$ and $c_0$.
Recalling that the improvement from $L^\infty$ to H\"older regularity is a crucial step in proving the global regularity of weak solution for the critical SQG equation (i.e. $\LL=|D|$) by Caffarelli-Vasseur \cite{CV}
and also Kiselev-Nazarov \cite{KisN}, we here as a nontrivial generalization achieve such an improvement for vanishing viscosity solution of the drift-diffusion equation \eqref{DDeq}-\eqref{uexp} at some logarithmically supercritical cases,
and we even remove the divergence-free assumption of the velocity field at \textrm{Case (II)}.
\end{remark}

\begin{remark}\label{rmk:glob}
As a counterpart of Theorem \ref{thm:glb2}, we can also prove the following global well-posedness result for \eqref{DDeq}-\eqref{uexp} at the slightly supercritical case complemented with regular data:
assume that $\theta_0\in H^s(\R^d)$, $s>\frac{d}{2}+1$, and either \textrm{Case (II)} or \textrm{Case (III)} is considered with $\alpha\in ]0,1]$, $\sigma\in[0,1[$, $c_0=c_0(\alpha,\sigma)>0$,
and additionally
\begin{equation}\label{m-int}
  \lim_{\nu\rightarrow 0+} \int_\nu^{c_0} m(\xi^{-1})\dd \xi =\infty,
\end{equation}
then the associated drift-diffusion equation \eqref{DDeq}-\eqref{uexp} generates a uniquely global smooth solution
$\theta \in C([0,\infty[; H^s(\R^d)) \cap C^\infty (]0,\infty[\times \R^d)$. We shall justify this statement at the appendix section.
This global well-posedness result is concerned with the slightly supercritical drift-diffusion equation \eqref{DDeq}-\eqref{uexp},
and it generalizes the corresponding result of \cite{DKSV} on the slightly supercritical SQG and Burgers equations.
Note also that the MOC given by \eqref{MOC4.3} has a simper form than that appeared in \cite{DKSV}, and we use a different way to estimate the contribution \eqref{Ixi} so that we can avoid the difficulty encountered in considering the general $u$ defined by \eqref{uexp}.
\end{remark}

\begin{remark}
  Motivated by Coti Zelati and Vicol \cite{CzV} and in a different method, we can also prove the following global result: assume that either \textrm{Case (II)} or \textrm{Case (III)} is considered for $\alpha=1$ and $\sigma\in [0,1[$ with some $c_0>0$ (independent of $\sigma$),
and let $\theta_0\in H^s(\R^d)$, $s>1+\frac{d}{2}$ be satisfying $\|\theta_0\|_{H^s(\R^d)}\leq R$ with some $R>0$,
then there exists a constant $\sigma_1=\sigma_1(R,d)>0$ such that for every $\sigma\leq \sigma_1$, the associated drift-diffusion equation \eqref{DDeq}-\eqref{uexp}
has a unique global solution $\theta(x,t)\in C([0,\infty[;H^s(\R^d))\cap C^\infty([0,\infty[\times\R^d)$. Indeed, the classical local well-posedness result first ensures that there is $T_1=T_1(d,R)>0$ such that the equation \eqref{DDeq}-\eqref{uexp} admits a smooth solution $\theta$ on $[0,T_1]$;
then similarly as obtaining \eqref{t1-bdd} and \eqref{t1-est} (we also adopt the different points in proving \eqref{Targ4} compared with proving \eqref{Targ3}), one can show that the eventual time $t_1\rightarrow 0$ as $\sigma\rightarrow 0$, which implies $t_1<T_1$ for $\sigma$ small enough, and thus we conclude the statement.
\end{remark}


The outline of the paper is as follows. In Section \ref{sec-prel}, we introduce a class of multiplier operators as examples of the diffusion operator $\LL$, and we present some useful auxiliary lemmas,
and we also collect the definition and useful lemmas related to the modulus of continuity.
In Section \ref{sec-evRe}, we give the detailed proof of Proposition \ref{prop-evRe}.
The proof of Theorems \ref{thmevRe} and \ref{thm:glb2} are respectively placed in the subsections of Section \ref{sec-thms}.
At last, the appendix section justifies the statement in Remark \ref{rmk:glob}.

Throughout this paper, $C$ stands for a constant which may be different from line to line.
The notation $X\lesssim Y$ means that $X\leq CY$. Denote by $B_r(x_0):=\{x\in \mathbb{R}^d: |x-x_0|< r\}$ the ball of $\mathbb{R}^d$ and we abbreviate $B_r(0)$ as $B_r$.
Denote $\mathcal{S}'(\mathbb{R}^d)$ the space of tempered distributions.
We use $\widehat{f}$ and $\check{g}$ to denote the
Fourier transform and the inverse Fourier transform of a tempered distribution, that is, $\widehat f(\zeta)=\int_{\mathbb{R}^d}e^{-i x\cdot \zeta} f(x)\mathrm{d} x$
and $\check{g}(x) = \frac{1}{(2\pi)^d}\int_{\R^d} e^{i x \cdot\zeta} g(\zeta)\dd \zeta  $.

\section{Preliminary and auxiliary lemmas}\label{sec-prel}

In this section, we introduce a class of multiplier operators as examples of the operator $\LL$,
and also compile some useful auxiliary lemmas.

\subsection{Multiplier operators as examples of $\LL$}\label{subsec-pre1}
In addition to the conditions (\textbf{A}1)-(\textbf{A}2) stated in the introduction, we assume that $m(\zeta)=m(|\zeta|)$ also may satisfy the following assumptions:
\\
(\textbf{A}4)\; $m$ is of the Mikhlin-H\"ormander type, i.e. there is some constant $c_3\geq 1$ so that
\begin{equation}\label{MHc}
  |\partial_\zeta^k m(\zeta)| \leq c_3 |\zeta|^{- k} m(\zeta),\quad \forall \zeta\neq 0,
\end{equation}
for all $k\in\N$ and $k\leq k_0$, with $k_0$ a positive constant depending only on $d$.
\\
(\textbf{A}5)\; $m$ satisfies that
\begin{equation}\label{InvMH}
  (-\Delta)^d m(\zeta)\geq c_4 |\zeta|^{-2d} m(\zeta),\quad \forall |\zeta|\; \textrm{large enough},
\end{equation}
with some $c_4>0$.
\\
(\textbf{A}6)\;
$m$ satisfies that
\begin{equation}\label{PosC}
  (-1)^{k-1}m^{(k)}(|\zeta|)\geq 0,\quad \forall |\zeta|>0, \, k\in\{1,2,\cdots,d\}.
\end{equation}

Note that there do exist a large class of nontrivial examples satisfying all the needing conditions;
in fact, as shown by \cite[Proposition 3.6]{Hmidi}, the functions $m(\zeta)= \frac{|\zeta|^\alpha}{\left(\log(\lambda+|\zeta|)\right)^\beta}$ with
$\lambda\geq e^{\frac{3+2\beta}{\alpha}}$, $\alpha\in ]0,1]$, $\beta\geq 0$ and $d=1,2,3$ satisfy \eqref{PosC}, and they also satisfy (\textbf{A}1)-(\textbf{A}2), (\textbf{A}4)-(\textbf{A}5) by a direct computation.

The following lemma relates the multiplier operator with the conditions of $K$ in the introduction.
\begin{lemma}\label{lem-Linteg}
  Suppose that $m(\zeta)=m(|\zeta|)$ is a radial function satisfies the conditions (\textbf{A}1)-(\textbf{A}2), (\textbf{A}4)-(\textbf{A}5). Then the multiplier operator $m(D)$
has the following representation formula
\begin{equation}\label{mDexp}
  m(D)\theta(x)= \left( m(\zeta) \widehat{\theta}(\zeta)\right)^\vee(x) = \mathrm{p.v.}\int_{\R^d} K(y)\left( \theta(x)-\theta(x+y)\right) \,\mathrm{d}y,
\end{equation}
where the radial kernel $K$ satisfies
\begin{equation}\label{Kest1}
  |K(y)|\leq C |y|^{-d} m(|y|^{-1}), \quad \forall |y|>0,
\end{equation}
and
\begin{equation}\label{Kest2}
  K(y) \geq c_5 |y|^{-d} m(|y|^{-1}),\quad \forall\, 0<|y|\leq c_0,
\end{equation}
with two generic constants $c_0,c_5>0$.
Besides, if $m(\zeta)=m(|\zeta|)$ additionally satisfies the condition (\textbf{A}6), then the kernel function $K$ in \eqref{mDexp} also satisfies
\begin{equation}\label{Kest3}
  K(y)\geq 0,\quad \forall |y|>0.
\end{equation}
\end{lemma}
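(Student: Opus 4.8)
The plan is to realize the kernel $K$ as (minus) the distributional inverse Fourier transform of $m$ --- which is a genuine function away from the origin --- and to read off the two-sided bounds from the Mikhlin--H\"ormander condition (iii) and the reverse condition (iv). Concretely, I would fix a Littlewood--Paley partition of unity $1=\sum_{j\in\Z}\varphi(2^{-j}\zeta)$ for $\zeta\neq 0$, with $\varphi$ radial and supported in the annulus $\{1/2\le|\zeta|\le 2\}$, set $m_j(\zeta):=m(\zeta)\varphi(2^{-j}\zeta)$, and let $K_j:=-(m_j)^\vee$; each $m_j$ being smooth with compact support, $K_j$ is a radial Schwartz function. Then I define $K:=\sum_{j\in\Z}K_j$ (equivalently $K=-\check m$ on $\R^d\setminus\{0\}$ in the sense of tempered distributions) and establish, in turn, the representation formula \eqref{mDexp}, the upper bound \eqref{Kest1}, the lower bound \eqref{Kest2}, and the positivity \eqref{Kest3}.

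For the representation and the upper bound: condition (iii) with $k=1$ forces $m$ to be doubling, so $m(\zeta)\approx m(2^j)$ on the support of $\varphi(2^{-j}\cdot)$ and $|\partial^k m_j|\lesssim 2^{-jk}m(2^j)$; a $k$-fold integration by parts then gives $|K_j(y)|\lesssim 2^{jd}m(2^j)(1+2^j|y|)^{-k}$ for $k$ large enough (permitted by (iii)). Summing over $j$ and splitting the sum at $2^j\approx|y|^{-1}$ --- using the doubling of $m$ together with the factor $2^{jd}$ to sum the geometric tails --- yields $|K(y)|\lesssim |y|^{-d}m(|y|^{-1})$, which is \eqref{Kest1}; in particular, after symmetrizing, the integrand in \eqref{mDexp} is $O(|y|^{2-d}m(|y|^{-1}))$ near $y=0$, which is integrable because the structural hypotheses (with $\alpha\le 1$) force $m$ to grow sublinearly, so the $\mathrm{p.v.}$ integral is well defined for Schwartz $\theta$. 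The identity \eqref{mDexp} itself is then checked on the Fourier side: since $\varphi(0)=0$ one has $\int K_j\,\dd y=0$, hence $\int K_j(y)(1-\cos(y\cdot\zeta))\,\dd y=m_j(\zeta)$, and summing over $j$ and interchanging sum and integral (justified by the above bounds) gives that $\int K(y)(\theta(x)-\theta(x+y))\,\dd y$ has Fourier symbol $m(\zeta)$.

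The crux is the lower bound \eqref{Kest2}, and this is where condition (iv) is used. I would split $m=m_L+m_H$ into low- and high-frequency parts via a fixed smooth cutoff, so that $K=K_L+K_H$ with $K_L=-\check{m_L}$ a fixed bounded smooth function near the origin and $K_H=-\check{m_H}$ carrying the singularity as $y\to 0$; it then suffices to bound $K_H$ from below on $0<|y|\le c_0$ for $c_0$ small, the term $K_L$ being absorbed. For $K_H$ I use $|y|^{2d}K_H(y)=-\mathcal{F}^{-1}\big[(-\Delta)^d m_H\big](y)$; subtracting the (finite, mildly singular) value of $\mathcal{F}[m_H]$ at the origin one writes this as $-(2\pi)^{-d}\int\big(e^{iy\cdot\zeta}-1\big)(-\Delta)^d m_H(\zeta)\,\dd\zeta$, splits the integral at $|\zeta|\approx|y|^{-1}$, and uses condition (iv) --- which gives $(-\Delta)^d m_H(\zeta)\gtrsim|\zeta|^{-2d}m(\zeta)>0$ in the relevant high-frequency range --- to extract a \emph{positive} main term of size $\approx\int_{|\zeta|\gtrsim|y|^{-1}}(1-\cos(y\cdot\zeta))\,|\zeta|^{-2d}m(\zeta)\,\dd\zeta\approx|y|^{d}m(|y|^{-1})$, the remaining contributions being of lower order; hence $K_H(y)\ge c_5|y|^{-d}m(|y|^{-1})$ for small $|y|$. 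I expect this step --- controlling the sign and size of $\int(e^{iy\cdot\zeta}-1)(-\Delta)^d m_H$ uniformly for small $|y|$, and making the cutoff choices quantitative --- to be the main technical obstacle. Finally, for \eqref{Kest3}: under the extra condition (v) the derivative $m'$ is completely monotone up to order $d-1$, and a classical positivity criterion for radial Fourier multipliers of this type (in the spirit of the P\'olya/Askey criterion and the L\'evy--Khinchin representation, cf.\ also \cite{Hmidi}) gives $K\ge 0$ on all of $\R^d\setminus\{0\}$; alternatively one writes $m$ as a superposition over scales of profiles whose inverse transforms are manifestly nonnegative and passes positivity through the superposition.
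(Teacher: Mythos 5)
Your plan for \eqref{Kest1}--\eqref{Kest2} reconstructs from scratch what the paper disposes of with a single citation: its entire proof of these two estimates is the sentence that they ``were proved in Lemmas 5.1--5.2 of \cite{DKSV}''. Your Littlewood--Paley decomposition $K_j=-\check m_j$, the Mikhlin-type pointwise decay $|K_j(y)|\lesssim 2^{jd}m(2^j)(1+2^j|y|)^{-k}$ coming from (iii), and the use of (iv) to force a positive main term are indeed the standard skeleton of that argument, so on this part you are not doing something \emph{different} so much as \emph{unpacking} the cited lemmas. Be warned that the lower bound is more delicate than your sketch allows: the identity $|y|^{2d}K_H(y)=-\mathcal{F}^{-1}\big[(-\Delta)^d m_H\big](y)$ holds only modulo distributions supported at the origin (since $m_H$ grows and $\check m_H$ is a priori a tempered distribution, not a function), and the constant $\int(-\Delta)^d m_H\,\dd\zeta$ you propose to subtract is not obviously negligible against the target $|y|^d m(|y|^{-1})\to 0$; tracking the cancellations among the Leibniz terms involving the cutoff is exactly where the real work lies. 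You rightly flag this as the main obstacle.

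Where you and the paper genuinely diverge is \eqref{Kest3}, and there your route as stated has a gap. The paper does \emph{not} apply a positivity criterion directly to $m$ or to $-\check m$. It shows, invoking \cite[Prop.~3.6, Lem.~3.8]{Hmidi}, that condition (v) forces the heat kernel $G_t=\mathcal{F}^{-1}[e^{-tm}]$ of $e^{-t\LL}$ to be nonnegative with unit mass, and only then passes to $K(y)=\lim_{t\to0+}G_t(y)/t\ge0$ via the semigroup representation $\LL f=\lim_{t\to0+}t^{-1}(f-e^{-t\LL}f)$. The P\'olya/Askey-type alternating-sign condition is applied to $e^{-tm}$, not to $m$, and the reduction ``$G_t\ge0\;\Rightarrow\;K\ge0$'' is the substantive step; your main sentence gestures toward this but never names the passage through $e^{-tm}$. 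Moreover your proposed ``alternative'' --- write $m$ as a superposition over scales of profiles whose inverse transforms are manifestly nonnegative --- appears to have the wrong sign: since $K=-\check m$ in your own setup, such a superposition would yield $\check m\ge0$ and hence $K\le0$. What you need is that the \emph{L\'evy measure} of each profile is nonnegative, which is again a statement about $e^{-tm_\lambda}$, not about $\check m_\lambda$; the semigroup route is precisely the device that makes this conversion rigorous, and it should be made explicit.
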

Notice that \eqref{Kest1}-\eqref{Kest2} just correspond to \eqref{Kcond1}, \eqref{Kcond5},
and \eqref{Kest1}-\eqref{Kest3} correspond to \eqref{Kcond1}-\eqref{Kcond2}.

\begin{proof}[Proof of Lemma \ref{lem-Linteg}]
The properties \eqref{Kest1}-\eqref{Kest2} were proved in \cite[Lemmas 5.1, 5.2]{DKSV}. We only prove \eqref{Kest3}. By arguing as \cite[Proposition 3.6 and Lemma 3.8]{Hmidi},
we can show that, thanks to (\textbf{A}6), the kernel function $G_t(x)$ associated with the operator $e^{-t \LL}$ satisfies
\begin{equation*}
  G_t(x)\geq 0,\quad\textrm{and}\quad \int_{\R^d} G_t(x)\,\mathrm{d}x = \widehat{G_t}(\cdot)|_{\zeta=0} = 1.
\end{equation*}
In light of the semigroup representation formula of the operator $\LL$,
\begin{equation*}
  \LL f(x) = \lim_{t\rightarrow 0+}\frac{f(x)-e^{-t \LL}f(x)}{t}= \lim_{t\rightarrow 0+} \int_{\R^d} \frac{G_t(y)}{t}\left( f(x)-f(x+y)\right)\,\mathrm{d}y,
\end{equation*}
we see that $K(y)=\lim_{t\rightarrow 0} \frac{G_t(y)}{t} \geq 0$ for all $|y|>0$.
\end{proof}

\subsection{Auxiliary lemmas}


First we give a useful lemma on the function $m$ satisfying \eqref{mDec}.
\begin{lemma}\label{lem-mfact}
Let $m(y)=m(|y|)$ be the radial function satisfying the condition \eqref{mDec} for some $\alpha\in ]0,1[$ and $\sigma\in [0,\alpha[$, then
\begin{equation}\label{mProp1}
  \textrm{the mapping $|y|\mapsto |y|^{\beta_1}m(|y|^{-1})$, $\beta_1\geq \alpha$ is non-decreasing,}
\end{equation}
and
\begin{equation}\label{mProp2}
  \textrm{the mapping $|y|\mapsto |y|^{\beta_2}m(|y|^{-1})$, $\beta_2\leq \alpha-\sigma$ is non-increasing.}
\end{equation}
\end{lemma}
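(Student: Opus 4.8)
The plan is to reduce both monotonicity statements to sign conditions on the derivative of $|y|^{\beta}m(|y|^{-1})$, and then use the two-sided bound \EQ{mDec} on $m'$. First I would substitute $r = |y|$ and set $g(r) = r^{\beta} m(r^{-1})$ for $r > 0$; since $m$ is smooth away from zero (condition (i)), $g$ is differentiable and
\[
  g'(r) = \beta\, r^{\beta - 1} m(r^{-1}) - r^{\beta - 2}\, m'(r^{-1}) = r^{\beta - 2}\Bigl( \beta\, r\, m(r^{-1}) - m'(r^{-1}) \Bigr).
\]
So the sign of $g'(r)$ is the sign of $\beta\, r\, m(r^{-1}) - m'(r^{-1})$. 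The next step is to rewrite \EQ{mDec} at the point $|y| = r^{-1}$: since $r^{-1} \geq c_0^{-1}$ exactly when $r \leq c_0$ — wait, \EQ{mDec} is stated for $|y| \geq c_0^{-1}$, which corresponds to $r^{-1} \geq c_0^{-1}$, i.e. $r \leq c_0$; but the conclusions \EQ{mProp1}-\EQ{mProp2} as used in the paper refer to \EQ{mDec2} (valid for all $|y| > 0$), so I would note that under \EQ{mDec2} the inequality holds for every $r > 0$. Evaluating \EQ{mDec} (or \EQ{mDec2}) at $s = r^{-1}$ gives
\[
  (\alpha - \sigma)\, r\, m(r^{-1}) \;\leq\; m'(r^{-1}) \;\leq\; \alpha\, r\, m(r^{-1}).
\]

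From here the two claims follow directly. For \EQ{mProp1}, take $\beta = \beta_1 \geq \alpha$; then $\beta_1\, r\, m(r^{-1}) \geq \alpha\, r\, m(r^{-1}) \geq m'(r^{-1})$ using the upper bound on $m'$ and $m(r^{-1}) \geq 0$ (which holds since $m$ is non-decreasing with $m(0) = 0$), so $g'(r) \geq 0$ and $g$ is non-decreasing. For \EQ{mProp2}, take $\beta = \beta_2 \leq \alpha - \sigma$; then $\beta_2\, r\, m(r^{-1}) \leq (\alpha - \sigma)\, r\, m(r^{-1}) \leq m'(r^{-1})$ using the lower bound on $m'$, so $g'(r) \leq 0$ and $g$ is non-increasing. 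This completes the argument.

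I do not expect any genuine obstacle here; the only point requiring a little care is bookkeeping the range of validity — the derivative computation itself is elementary, and the positivity of $m$ needed to flip the inequality $\alpha\, r\, m(r^{-1}) \geq \beta_1\, r\, m(r^{-1})$ into the correct direction comes for free from (i). If one wants to avoid differentiability hypotheses altogether, an alternative is to integrate the bound on $m'/m$ to get the power-law comparison $m(\rho)/m(\rho') \le (\rho/\rho')^{\alpha}$ for $\rho \ge \rho'$ (and the reverse with exponent $\alpha - \sigma$), which is essentially the estimate displayed just before \EQ{Kcond4} in the excerpt, and then translate directly into monotonicity of $r \mapsto r^{\beta} m(r^{-1})$; but the derivative approach is the cleanest and is the one I would write up.
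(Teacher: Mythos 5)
Your proof is correct and is essentially the same as the paper's: both compute the derivative of $r\mapsto r^{\beta}m(r^{-1})$ and apply the two-sided bound \eqref{mDec} at the point $r^{-1}$ to fix its sign. Your remark on the range of validity (the bound as stated in \eqref{mDec} only gives monotonicity for $r\le c_0$, while \eqref{mDec2} gives it for all $r>0$) is a point the paper glosses over, but it does not change the argument.
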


\begin{proof}[Proof of Lemma \ref{lem-mfact}]
  Let $f_i(r)=r^{\beta_i} m(r^{-1})$ for $i=1,2$ and $r>0$, then by the direct computation,
\begin{equation*}
  f_1'(r)= r^{\beta_1-1}\left( \beta_1 m(r^{-1})-r^{-1} m'(r^{-1})\right)\geq (\beta_1-\alpha)r^{\beta_1-1} m(r^{-1})\geq 0,
\end{equation*}
which yields \eqref{mProp1}, and similarly,
\begin{equation*}
  f_2'(r)=r^{\beta_2-1}\left( \beta_2 m(r^{-1}) -r^{-1} m'(r^{-1})\right)\leq \left(\beta_2 -(\alpha-\sigma)\right) r^{\beta_2-1} m(r^{-1})\leq 0,
\end{equation*}
which yields \eqref{mProp2}.
\end{proof}

The next lemma concerns the pointwise lower bound estimate of the symbol of the operator $\LL$. 
\begin{lemma}\label{lem-symb}
  Let $\LL$ be defined by \eqref{Lexp} with $K(y)$ satisfying \eqref{Kcond1}-\eqref{Kcond2} and $m(y)$ satisfying (\textbf{A}1)-(\textbf{A}2),
then the associated symbol $A(\zeta)$ given by \eqref{LKf} satisfies that
\begin{equation}\label{Aest}
  A(\zeta)\geq  C^{-1} |\zeta|^{\alpha-\sigma} - C,\quad \forall \zeta\in\R^d,
\end{equation}
where $\alpha\in ]0,1]$, $\sigma\in [0,\alpha[$ and $C$ is a positive constant depending only on $d$, $\alpha$ and $\sigma$. Besides, if $K(y)$ satisfies \eqref{Kcond1}, \eqref{Kcond5} with $m(y)$ satisfying (\textbf{A}1)-(\textbf{A}2),
we can also get \eqref{Aest} with a different constant $C$. In particular, if $K(y)$ satisfies \eqref{Kcond3} with $m(y)=|y|^\alpha$ ($\alpha\in ]0,1]$), $\forall y\neq 0$, we get
\begin{equation}\label{Aest2}
  A(\zeta)\geq  C^{-1} |\zeta|^{\alpha},\quad \forall \zeta\in\R^d,
\end{equation}
with $C$ a positive constant depending only on $d$ and $\alpha$.
\end{lemma}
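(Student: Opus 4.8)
The plan is to estimate the L\'evy--Khinchin integral $A(\zeta)=\int_{\R^d\setminus\{0\}}(1-\cos(\zeta\cdot y))K(y)\,\dd y$ from below by isolating the near-field contribution $\{0<|y|\le c_0\}$, where the lower bound in \EQ{Kcond1} is available, and controlling the far-field by a bounded error. First I would fix $\zeta$ with $|\zeta|$ large (the case $|\zeta|$ bounded being absorbed into the $-C$ term) and write $A(\zeta)\ge \int_{|y|\le c_0}(1-\cos(\zeta\cdot y))K(y)\,\dd y - \int_{|y|\ge c_0}|1-\cos(\zeta\cdot y)|K(y)\,\dd y$. Under Case (I), the second integral is bounded by $2\int_{|y|\ge c_0}K(y)\,\dd y\le 2c_1\int_{|y|\ge c_0}|y|^{-d-\tilde\alpha}\,\dd y=:C_0<\infty$ by \EQ{Kcond2}; under Case (II) with \EQ{Kcond5}, the same bound holds with $|K(y)|$ in place of $K(y)$, giving the claimed constant change. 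So everything reduces to the lower bound
\[
  \int_{|y|\le c_0}(1-\cos(\zeta\cdot y))K(y)\,\dd y \;\ge\; c_1^{-1}\int_{|y|\le c_0}(1-\cos(\zeta\cdot y))\,\frac{m(|y|^{-1})}{|y|^d}\,\dd y \;\gtrsim\; |\zeta|^{\alpha-\sigma}.
\]

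For this last estimate I would use the standard dyadic/scaling trick: restrict to the annulus $A_\zeta:=\{ \frac{\delta}{|\zeta|}\le |y|\le \frac{2\delta}{|\zeta|}\}$ for a small fixed dimensional constant $\delta$, chosen so that $\frac{2\delta}{|\zeta|}\le c_0$ (true for $|\zeta|$ large) and so that, by averaging over rotations or by the elementary inequality $\int_{A_\zeta}(1-\cos(\zeta\cdot y))\,\dd y \gtrsim_d |A_\zeta|$, one gets $\int_{A_\zeta}(1-\cos(\zeta\cdot y))\frac{m(|y|^{-1})}{|y|^d}\,\dd y \gtrsim m(|\zeta|/\delta)\,|\zeta|^d\cdot|A_\zeta|\cdot(\text{const}) \gtrsim m(|\zeta|)$, using that $m$ is non-decreasing and that $m(|\zeta|/\delta)\gtrsim m(|\zeta|)$ by the doubling-type consequence \EQ{mDec} of assumption (ii). Then I invoke Lemma~\ref{lem-mfact}: since $|y|\mapsto |y|^{\alpha-\sigma}m(|y|^{-1})$ is non-increasing (take $\beta_2=\alpha-\sigma$ in \EQ{mProp2}), one has, writing $|\zeta|=|y|^{-1}$ with $|y|$ comparable to $c_0$, the comparison $m(|\zeta|)\ge C^{-1}|\zeta|^{\alpha-\sigma}$ for $|\zeta|$ large, where $C$ depends on $c_0, m(c_0^{-1}),\alpha,\sigma$. (Equivalently: iterate \EQ{mDec} from $c_0^{-1}$ out to $|\zeta|$.) Combining with the bounded far-field error gives $A(\zeta)\ge C^{-1}|\zeta|^{\alpha-\sigma}-C$ for all $\zeta$, after enlarging $C$ to cover small $|\zeta|$.

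For the special case \EQ{Kcond3} with $m(y)=|y|^\alpha$, the kernel satisfies the exact lower bound $K(y)\ge c_1^{-1}|y|^{-d-\alpha}$ for \emph{all} $y\ne 0$, so no far-field subtraction is needed: by the scaling $y\mapsto y/|\zeta|$ one gets directly $A(\zeta)\ge c_1^{-1}\int_{\R^d}(1-\cos(\zeta\cdot y))|y|^{-d-\alpha}\,\dd y = c_1^{-1}|\zeta|^\alpha\int_{\R^d}(1-\cos(e_1\cdot z))|z|^{-d-\alpha}\,\dd z = C^{-1}|\zeta|^\alpha$, the remaining integral being a finite positive dimensional constant (finiteness at $0$ uses $\alpha<2$, at $\infty$ uses $\alpha>0$). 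I expect the main obstacle to be the bookkeeping in the annular lower bound $\int_{A_\zeta}(1-\cos(\zeta\cdot y))\,\dd y\gtrsim_d |A_\zeta|$ — one must choose $\delta$ small enough that $\cos(\zeta\cdot y)$ stays away from $1$ on a set of comparable measure (e.g. restrict further to the sub-cone where $|\zeta\cdot y|\ge \tfrac12 \delta$), and then carry the dependence on $c_0$, which here is permitted since $c_0=c_{\alpha,\sigma}$ may depend on $\alpha,\sigma$, cleanly into the final constant $C=C(d,\alpha,\sigma)$.
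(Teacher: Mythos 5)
Your proposal is correct, but the near-field estimate is carried out by a genuinely different mechanism than the paper's. The paper handles $\int_{|y|\le c_0}(1-\cos(\zeta\cdot y))K(y)\,\dd y$ by a \emph{pointwise} kernel comparison: using \LEM{lem-mfact} it bounds $\frac{m(|y|^{-1})}{|y|^d}\ge c_0^{\alpha-\sigma}m(c_0^{-1})\,|y|^{-(d+\alpha-\sigma)}$ on $|y|\le c_0$, then invokes the exact Fourier identity $|\zeta|^{\alpha-\sigma}=c_{d,\alpha}\int(1-\cos(\zeta\cdot y))|y|^{-(d+\alpha-\sigma)}\dd y$ and subtracts the finite tail $\int_{|y|\ge c_0}|y|^{-(d+\alpha-\sigma)}\dd y$ to truncate it; the far field is either dropped (Case (I), $K\ge 0$) or bounded by a constant via \EQ{Kcond2}/\EQ{Kcond5}, exactly as you do. You instead keep $m$ intact, run the annulus/scaling argument on $\{\delta/|\zeta|\le |y|\le 2\delta/|\zeta|\}$ to get the intermediate bound $A(\zeta)\gtrsim m(|\zeta|)-C$, and only then convert $m(|\zeta|)\ge c_0^{\alpha-\sigma}m(c_0^{-1})|\zeta|^{\alpha-\sigma}$ via \EQ{mProp2} — the same monotonicity fact the paper uses, applied on the symbol side rather than the kernel side. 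Your route is slightly longer (one must indeed check that $1-\cos(\zeta\cdot y)$ is bounded below on a sub-region of the annulus of comparable measure, and that the doubling $m(|\zeta|/(2\delta))\gtrsim m(|\zeta|)$ holds for $|\zeta|$ large, both of which follow from \EQ{mDec}), but it yields the sharper intermediate statement $A(\zeta)\gtrsim m(|\zeta|)-C$, which the paper's comparison-to-a-power argument discards at the outset. Your treatment of the special case \EQ{Kcond3} with $m(y)=|y|^\alpha$ coincides with the paper's.
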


Note that if $m(y)\equiv |y|^\alpha$, then we can get \eqref{Aest} with $\sigma=0$ for the associated operator $\LL$,
and this special result in fact has appeared in the literatures, e.g. \cite[Lemma 2.2]{ChaM}.

\begin{proof}[Proof of Lemma \ref{lem-symb}]
  Recalling that for every $\alpha\in ]0,2[$ we have (e,g, see \cite[Eq. (3.219)]{Jacob})
\begin{equation}\label{eq:fact1}
  |\zeta|^\alpha = c_{d,\alpha}\;   \int_{\R^d\setminus\{0\}}\left( 1-\cos(y\cdot \zeta)\right) \frac{1}{|y|^{d+\alpha}} \dd y,\quad \forall \zeta\in\R^d
\end{equation}
and by virtue of the lower bound of $K$ in \eqref{Kcond1}-\eqref{Kcond2} and the fact $|y|^{\alpha-\sigma}m(|y|^{-1})\geq  c_0^{\alpha-\sigma}m(c_0^{-1})$ for all $0<|y|\leq c_0$, we obtain
\begin{align*}
  A(\zeta) 
  & \geq  c_1^{-1}\;  \int_{0<|y|\leq c_0} \left( 1-\cos(y\cdot \zeta)\right) \frac{m(|y|^{-1})}{|y|^d} \dd y \\
  & \geq c_1^{-1} c_0^{\alpha-\sigma} m(c_0^{-1})  \int_{0<|y|\leq c_0} \left( 1-\cos(y\cdot \zeta)\right) \frac{1}{|y|^{d+(\alpha-\sigma)}}\dd y \\
  & \geq c_1^{-1}c_0^{\alpha-\sigma} m(c_0^{-1})  \Big(  c_{d,\alpha}^{-1} |\zeta|^{\alpha-\sigma}
  - \int_{|y|\geq c_0}\frac{1}{|y|^{d+\alpha-\sigma}}\dd y\Big)  \\
  & \geq C^{-1} |\zeta|^{\alpha-\sigma} - C.
\end{align*}
If $K$ satisfies \eqref{Kcond1} and \eqref{Kcond5}, we similarly deduce
\begin{align*}
  A(\zeta) & \geq c_1^{-1}\int_{0<|y|\leq c_0} \left( 1-\cos(y\cdot\zeta)\right) \frac{m(|y|^{-1})}{|y|^d}\dd y - c_1 \int_{|y|\geq c_0} \left( 1-\cos(y\cdot \zeta) \right) |K(y)| \dd y \\
  & \geq c_1^{-1}c_0^{\alpha-\sigma} m(c_0^{-1}) \int_{0<|y|\leq c_0} \left( 1-\cos(y\cdot\zeta)\right) \frac{1}{|y|^{d+\alpha-\sigma}}\dd y
  - c_1 c_0^{\alpha-\sigma} m(c_0^{-1})  \int_{|y|\geq c_0}  \frac{1}{|y|^{d+\tilde\alpha}} \dd y \\
  & \geq C^{-1} |\zeta|^{\alpha-\sigma} -C .
\end{align*}
If $K$ satisfies \eqref{Kcond3} with $m(y)=|y|^\alpha$ ($\alpha\in ]0,1]$), $\forall y\neq 0$, from \eqref{eq:fact1}
we see that $A(\zeta)\geq  c_1^{-1} c_{d,\alpha}^{-1} |\zeta|^{\alpha}$, which leads to \eqref{Aest2}.
\end{proof}

The following lemma is about the $L^\infty$-estimate of smooth solution for the equation \eqref{DDeq}-\eqref{uexp}.
\begin{lemma}\label{lem-theMP}
Let $\theta\in C([0,T^*[; H^s(\R^d))$, $s>1+\frac{d}{2}$ be a smooth solution to the drift-diffusion equation \eqref{DDeq}-\eqref{uexp}.
If {\textrm Case (II)} (i.e. \eqref{case2}) is supposed, then we have
\begin{equation}\label{theMP1}
  \|\theta(t)\|_{L^\infty}\leq \|\theta_0\|_{L^\infty}, \quad \textrm{for all   } t\in [0,T^*[.
\end{equation}
Besides, if {\textrm Case (III)} (i.e. \eqref{case1}) is assumed, we get
\begin{equation}\label{theMP2}
  \|\theta(t)\|_{L^\infty} \leq C(\|\theta_0\|_{L^2\cap L^\infty}, \alpha,\sigma,d),  \quad \textrm{for all   } t\in [0,T^*[.
\end{equation}

\end{lemma}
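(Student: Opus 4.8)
The plan is to prove the two $L^\infty$ bounds separately, in both cases exploiting the nonlocal structure of $\LL$ at a point where $|\theta|$ attains its maximum, together with the specific properties of the velocity field $u$ in each case.

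\textbf{Case (I).} First I would note that under \eqref{case1} the kernel $K$ is nonnegative (it satisfies \eqref{Kcond1}--\eqref{Kcond2}, in particular $K(y)\geq 0$ everywhere), which is exactly what makes the nonlocal maximum principle available. The plan is to argue that $M(t):=\|\theta(t)\|_{L^\infty}$ is non-increasing. Since $\theta\in C([0,T^*[;H^s)$ with $s>1+\tfrac d2$ and hence $\theta(t)\in C_0(\R^d)$ by Sobolev embedding, for each fixed $t$ there is a point $\bar x=\bar x(t)$ with $\theta(\bar x,t)=\max_x\theta(x,t)=:M_+(t)$ (and similarly a minimum point). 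Evaluating \eqref{DDeq} at $\bar x$: the transport term $u\cdot\nabla\theta$ vanishes since $\nabla\theta(\bar x)=0$, and
\begin{equation*}
  \LL\theta(\bar x)=\mathrm{p.v.}\int_{\R^d}\bigl(\theta(\bar x)-\theta(\bar x+y)\bigr)K(y)\,\dd y\geq 0,
\end{equation*}
because $\theta(\bar x)-\theta(\bar x+y)\geq 0$ for all $y$ and $K\geq 0$; moreover the integral converges since $\int(\min\{1,|y|^2\})K(y)\,\dd y<\infty$ and $\theta$ is bounded with bounded second derivatives near $\bar x$. Hence $\partial_t\theta(\bar x,t)=-\LL\theta(\bar x,t)\leq 0$. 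The standard argument (e.g. via Rademacher's theorem applied to the Lipschitz-in-$t$ function $M_+(t)$, or via the envelope theorem using that $t\mapsto\theta(\cdot,t)$ is continuous into $C_0$) then gives $\tfrac{d}{dt}M_+(t)\leq 0$ in the a.e./one-sided sense, so $M_+(t)\leq M_+(0)\leq\|\theta_0\|_{L^\infty}$. The symmetric argument at a minimum point gives the lower bound, yielding \eqref{theMP1}. I would be slightly careful to record that no structural hypothesis on $u$ is needed here beyond $u\in L^\infty_{loc}$ so that the transport term makes sense — the divergence-free condition is \emph{not} required in Case (I).

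\textbf{Case (II).} Now $K$ need only satisfy \eqref{Kcond1} and \eqref{Kcond5}, so $K$ may change sign for $|y|\geq c_0$ and the pointwise argument above fails; on the other hand we are given $A(\zeta)\geq 0$ and $\divg u=0$. The plan is to run an $L^p$ energy estimate and let $p\to\infty$, combined with an $L^2$ bound. Multiplying \eqref{DDeq} by $|\theta|^{p-2}\theta$ and integrating, the drift term drops out since $\divg u=0$ (it becomes $\int u\cdot\nabla(|\theta|^p/p)=0$), leaving
\begin{equation*}
  \frac1p\frac{d}{dt}\|\theta(t)\|_{L^p}^p+\int_{\R^d}\bigl(\LL\theta\bigr)\,|\theta|^{p-2}\theta\,\dd x=0.
\end{equation*}
The nonlocal term is nonnegative: writing out $\LL$ via \eqref{Lexp} and splitting $K=K\mathbf{1}_{|y|\le c_0}+K\mathbf{1}_{|y|>c_0}$, the near part contributes a manifestly nonnegative symmetrized integral (since $K\ge0$ there and $(a-b)(|a|^{p-2}a-|b|^{p-2}b)\ge0$), while for the far part one uses the positivity of the symbol $A$ together with a Plancherel-type / Córdoba–Córdoba argument — for $p=2$ this is exactly $\int(\LL\theta)\theta=\int A(\zeta)|\hat\theta(\zeta)|^2\,\dd\zeta\geq 0$, and for general even integers $p$ one invokes the pointwise inequality $|\theta|^{p-2}\theta\,\LL\theta\geq \tfrac1{?}\LL(|\theta|^p)$ type bound (Córdoba–Córdoba, Ju), whose proof only uses $K\ge 0$ on the near range; the contribution of the sign-changing far tail $|K(y)|\le c_1|y|^{-d-\tilde\alpha}$ is an error term controlled by $C\|\theta\|_{L^p}^p$. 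Either way one obtains $\frac{d}{dt}\|\theta(t)\|_{L^p}\leq C\|\theta(t)\|_{L^p}$ with $C=C(d,\alpha,\sigma)$ uniform in $p$, hence $\|\theta(t)\|_{L^p}\leq e^{Ct}\|\theta_0\|_{L^p}\le e^{Ct}\max\{\|\theta_0\|_{L^2},\|\theta_0\|_{L^\infty}\}$ after interpolating $L^p$ between $L^2$ and $L^\infty$. Letting $p\to\infty$ gives $\|\theta(t)\|_{L^\infty}\le e^{Ct}\|\theta_0\|_{L^2\cap L^\infty}$. To upgrade this to a bound independent of $t$ (as stated in \eqref{theMP2}) I would additionally use the $L^2$ energy estimate together with the coercivity $A(\zeta)\ge C^{-1}|\zeta|^{\alpha-\sigma}-C$ from Lemma \ref{lem-symb}: this yields $\frac{d}{dt}\|\theta\|_{L^2}^2+C^{-1}\|\theta\|_{\dot H^{(\alpha-\sigma)/2}}^2\le C\|\theta\|_{L^2}^2$, and then a Nash/De Giorgi-type argument (hypercontractivity of the semigroup generated by $\LL-C$) bootstraps $L^2\to L^\infty$ with a bound of the form $C\,t^{-\gamma}\|\theta_0\|_{L^2}+\dots$, which combined with the short-time bound $e^{Ct}\|\theta_0\|_{L^2\cap L^\infty}$ gives a constant depending only on $\|\theta_0\|_{L^2\cap L^\infty},\alpha,\sigma,d$ uniformly on $[0,T^*[$.

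\textbf{Main obstacle.} The delicate point is the sign of the nonlocal term in Case (II): because $K$ is only bounded (and possibly negative) for $|y|\ge c_0$, one cannot directly invoke the clean maximum-principle inequality, and one must carefully separate the genuinely dissipative near-field part from the sign-indefinite far-field tail, absorbing the latter into a Grönwall term — and then, for the time-\emph{uniform} bound \eqref{theMP2}, extracting smoothing from the coercivity estimate \eqref{Aest} of Lemma \ref{lem-symb} rather than from bare dissipativity. In Case (I) the only subtlety is the routine justification that $M_\pm(t)$ is differentiable a.e.\ with the one-sided derivative controlled by the PDE at the extremal point, which is by now standard.
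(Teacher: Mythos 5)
Your Case (I) argument is exactly the paper's: the paper simply notes that $K\geq 0$ under \eqref{Kcond1}--\eqref{Kcond2} makes the classical pointwise maximum-principle argument (evaluation at an extremal point, where the transport term vanishes and $\LL\theta\geq 0$) applicable, cites C\'ordoba--C\'ordoba, and omits the details; your observation that no divergence-free condition is needed here is also consistent with the paper.

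For Case (II) you take a genuinely different route, and as sketched it has a gap. The paper's argument is short and direct: $\divg u=0$ and $A\geq 0$ give $\|\theta(t)\|_{L^2}\leq\|\theta_0\|_{L^2}$; then at a point $x_t$ with $\theta(x_t,t)=M(t)$, Chebyshev gives $|\{|\theta(\cdot,t)|\geq M/2\}|\leq 4\|\theta_0\|_{L^2}^2/M^2$, so on a substantial portion of the annulus $r_t\leq|y|\leq c_0$ (with $r_t\sim(\|\theta_0\|_{L^2}/M)^{2/d}$) one has $\theta(x_t)-\theta(x_t+y)\geq M/2$; since the near-field integrand is nonnegative this yields $\LL\theta(x_t)\geq c\,M^{1+2(\alpha-\sigma)/d}\|\theta_0\|_{L^2}^{-2(\alpha-\sigma)/d}-CM$, the $-CM$ coming from the crude tail bound \eqref{Kcond5}. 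The differential inequality $\frac{d}{dt}M\leq -c M^{1+2(\alpha-\sigma)/d}+CM$ then gives \eqref{theMP2} with no iteration. Your scheme is missing two things. First, both of your ingredients grow in time: the $L^p$ estimate gives $e^{Ct}$, and the De Giorgi bound, as actually obtained in the paper's Section 6.1, carries a factor $(1+CT)^{1/2}$; to reach a $t$-uniform constant you must restart the $L^2\to L^\infty$ smoothing on unit time intervals using the monotone $L^2$ norm --- fixable, but unstated. Second, and more seriously, the De Giorgi step itself is obstructed by the sign-changing tail of $K$: the C\'ordoba--C\'ordoba inequality $\psi'(\theta)\,\LL\theta\geq\LL(\psi(\theta))$ requires $K\geq 0$ everywhere, and the error from $|y|\geq c_0$ in the level-set energy inequality is of size $C\|\theta\|_{L^2}\|\theta_k\|_{L^2}$, i.e. linear rather than superlinear in the truncation $\theta_k=(\theta-M_k)_+$; this destroys the convergence $U_k\to 0$ of the nonlinear recursion, since the resulting exponent $q/4=\tfrac12+\tfrac{\alpha-\sigma}{2d}$ is below $1$. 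This can be repaired (e.g. by truncating $\theta$ and $-\theta$ simultaneously, so that $|\theta|\leq M_k+(\theta-M_k)_++(-\theta-M_k)_+$ and the error becomes quadratic in the truncations), but you identify the wrong step as the delicate one: absorbing the far tail in the $L^p$ estimate is routine, whereas the far tail inside the De Giorgi iteration is where the real work lies, and the paper's pointwise argument avoids it entirely.
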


\begin{proof}[Proof of Lemma \ref{lem-theMP}]
Due to that the kernel $K$ is nonnegative on $\R^d\setminus\{0\}$, the proof of \eqref{theMP1} is classical (cf. \cite[Theorem 4.1]{CorC} for $\LL=|D|^\alpha$), and we here omit the details.

Next we prove \eqref{theMP2}. Thanks to the assumptions that $\mathrm{div}\,u =0$ and $A(\zeta)\geq 0$, by the $L^2$-energy estimate (cf. \eqref{ene-est2} below), we get
$\|\theta(t)\|_{L^2_x} \leq \|\theta_0\|_{L^2}$ for all $t\in [0,T^*[$. Now for every $t\in ]0,T^*[$, assume that $x_t\in \R^d$ is some point satisfying $\theta(x_t,t)=\|\theta(t)\|_{L^\infty_x}=:M(t)$.
According to
\begin{equation*}
  \Big|\Big\{y\in \R^d: |\theta(x_t +y)|\geq \frac{M(t)}{2}\Big\}\Big|\leq \Big(\frac{2 \|\theta(t)\|_{L^2}}{M(t)}\Big)^2 \leq  \frac{4 \|\theta_0\|_{L^2}^2}{M(t)^2} ,
\end{equation*}
and denoting by $r_t:= \frac{4^{1/d}}{|B_1(0)|^{1/d}}\frac{\|\theta_0\|_{L^2}^{2/d}}{M(t)^{2/d}}$,
we may set $M(t)$ large enough so that $r_t\leq \frac{c_0}{2}$. Taking advantage of \eqref{Kcond1}, \eqref{Kcond5} and Lemma \ref{lem-mfact},
we find that (by arguing as \cite[Lemma 4.1]{KisN}),
\begin{align*}
  (\LL\theta)(x_t,t) & \geq\, c_1^{-1} \int_{0<|y|\leq c_0} (\theta(x_t,t)-\theta(x_t+y,t)) \frac{m(|y|^{-1})}{|y|^d}\dd y - 2c_1 M(t) \int_{|y|\geq c_0}\frac{1}{|y|^{d+\tilde\alpha}}\dd y \\
  & \geq \, c_1^{-1} \frac{M(t)}{2} \int_{r_t \leq |y|\leq c_0} \frac{m(|y|^{-1})}{|y|^d} \dd y  -2 c_1 M(t) \int_{|y|\geq c_0} \frac{1}{|y|^{d+\tilde\alpha}}\dd y \\
  & \geq \, c_1^{-1}c_0^{\alpha-\sigma} m(c_0^{-1}) \frac{M(t)}{2}\int_{r_t \leq |y|\leq c_0} \frac{1}{|y|^{d+\alpha-\sigma}}\dd y - 2 c_1 M(t)\int_{|y|\geq c_0} \frac{1}{|y|^{d+\tilde\alpha}}\dd y \\
  & \geq \, c_1^{-1} c_0^{\alpha-\sigma} m(c_0^{-1})\frac{M(t)}{2}  \frac{|B_1(0)|}{\alpha-\sigma} \frac{1}{2 r_t^{\alpha-\sigma}} - 2c_1 M(t) \frac{|B_1(0)|}{\tilde\alpha} \\
  & = \, \frac{C_{\alpha,\sigma,d}}{\|\theta_0\|_{L^2}^{2(\alpha-\sigma)/d}} M(t)^{1+ \frac{2(\alpha-\sigma)}{d}}- C_{\tilde\alpha,d} M(t).
\end{align*}
Hence we see that
\begin{equation*}
  \frac{d}{dt}M(t)\leq - C_{\alpha,\sigma,d} \|\theta_0\|_{L^2}^{-\frac{2(\alpha-\sigma)}{d}} M(t)^{1+\frac{2(\alpha-\sigma)}{d}} + C_{\tilde\alpha,d}M(t),
\end{equation*}
and for $M(t)$ larger than the quantity $\|\theta_0\|_{L^2} \left( \frac{C_{\tilde\alpha,d}}{C_{\alpha,\sigma,d}}\right)^{\frac{d}{2(\alpha-\sigma)}}$, we have $\frac{d}{dt}M(t)\leq 0$,
which readily implies that $M(t)\leq \max\Big\{\|\theta_0\|_{L^\infty},\left( \frac{C_{\tilde\alpha,d}}{C_{\alpha,\sigma,d}}\right)^{\frac{d}{2(\alpha-\sigma)}}\|\theta_0\|_{L^2} \Big\}$
and concludes the lemma.
\end{proof}

Finally, we state the following key regularity criterion for the drift-diffusion equation \eqref{DDeq}.
\begin{lemma}\label{lem-RC}
\begin{enumerate}[(1)]
\item
Assume that Case (III) is considered, $\theta_0\in L^p(\mathbb{R}^d)$ for some $p\in [2,\infty[$. If the drift $u$ satisfies that for any $T>0$,
\begin{equation}\label{ucd1}
  u\in L^\infty([0,T]; C^\delta(\R^d)),\quad \textrm{for every  } \delta\in ]1-\alpha+\sigma,1[,
\end{equation}
then the drift-diffusion equation \eqref{DDeq} admits a unique weak solution (in the distributional sense) $\theta\in L^\infty([0,T]; L^p(\R^d))$ which satisfies
$\theta\in L^{\infty}(]0,T],C^{1,\gamma}(\mathbb{R}^d))$ with any $\gamma \in ]0,\delta+\alpha-\sigma-1[$.
Moreover, if the drift field $u$ is given by \eqref{uexp}, we have $\theta\in C^\infty(]0,T]\times\R^d)$.
\item
  Suppose that Case (II) is considered, $\theta_0\in C_0(\mathbb{R}^d)$, and the drift $u$ satisfy \eqref{ucd1} for any $T>0$.
Then for the following approximate equation of the drift-diffusion equation \eqref{DDeq}:
\begin{equation*}
  \partial_t \theta^\epsilon + u^\epsilon \cdot\nabla \theta^\epsilon +\mathcal{L} \theta^\epsilon=0,\quad u^\epsilon= \phi_\epsilon*u ,\quad \theta^\epsilon|_{t=0} = \theta_01_{B_{1/\epsilon}}(x) ,
\end{equation*}
with $\phi_\epsilon(x)=\epsilon^{-d}\phi(x/\epsilon)$, $\phi$ the standard mollifier and $1_{B_{1/\epsilon}}$ the indicator function on the ball $B_{1/\epsilon}$, the corresponding regularized solution $\theta^\epsilon$ uniformly-in-$\epsilon$ satisfies that $\theta^\epsilon\in L^\infty([0,T]; C_0(\R^d))\cap L^{\infty}(]0,T],\,C^{1,\gamma}(\mathbb{R}^d))$ for any $\gamma\in ]0,\delta+\alpha-\sigma-1[$. Moreover, if the drift field $u$ is given by \eqref{uexp}, we have $\theta^\epsilon\in C^\infty(]0,T] \times\R^d)$ uniformly in $\epsilon$.
\end{enumerate}
\end{lemma}

For the proof of Lemma \ref{lem-RC}, one can refer to \cite[Theorems 1.1, 1.2 and Remark 1.3]{XueY} for the detailed proof of the same result for the drift-diffusion equation \eqref{DDeq} with more general L\'evy-type operator $\mathcal{L}$.

\subsection{Modulus of Continuity}\label{sec-MOC}

In this subsection we gather some results related to the modulus of continuity, which play an important role on the method of nonlocal maximum principle.

First is the definition of the modulus of continuity.
\begin{definition}
  A function $\omega:[0,\infty[ \rightarrow [0,\infty[$ is called a modulus of continuity (abbr. MOC) if $\omega$ is continuous on $]0,\infty[$, nondecreasing, concave, and
  piecewise $C^2$ with one-sided derivatives defined at every point in $]0,\infty[$.
  We say a function $f:\mathbb{R}^d\rightarrow \mathbb{R}^l$ obeys the modulus of continuity $\omega$ if $|f(x)-f(y)| \leq \omega(|x-y|)$
  for all $x, y\in \mathbb{R}^d$, and say $f:\mathbb{R}^d\rightarrow \mathbb{R}^l$ strictly obeys the modulus of continuity $\omega$ if the above inequality is strict for every $x\neq y\in \mathbb{R}^d$.
\end{definition}

Then we recall the general criterion of the nonlocal maximum principle for the whole-space drift-diffusion equation
(for the proof see \cite[Proposition 3.2]{MX-gQG} or \cite[Theorem 2.2]{Kis}).

\begin{proposition}\label{prop-GC}
 Let $\theta\in C([0,\infty[;H^s(\R^d))$, $s>\frac{d}{2}+1$ be a smooth solution of the following whole space drift-diffusion equation
\begin{equation}\label{appActS}
 \partial_t \theta + u\cdot\nabla \theta +  \mathcal{L} \theta -\epsilon \Delta \theta=0,\quad \theta(0,x)=\theta_0(x), \; x\in\mathbb{R}^d,
\end{equation}
 with $\epsilon\geq 0$. Assume that
\\
(1) for every $t\geq 0$, $\omega(\xi,t)$ is a MOC and satisfies that its inverse function $\omega^{-1}(3\|\theta(\cdot,t)\|_{L^\infty_x},t)<\infty$;
\\
(2) for every fixed point $\xi$, $\omega(\xi,t)$ is piecewise $C^1$ in the time variable with one-sided derivatives defined at each point,
 and that for all $\xi$ near infinity, $\omega(\xi,t)$ is continuous in $t$ uniformly in $\xi$;
\\
(3) $\omega(0+,t)$ and $\partial_\xi \omega(0+,t)$ are continuous in $t$ with values in $\R\cup\{\pm \infty\}$,
 and satisfy that for every $t\geq 0$, either $\omega(0+,t)>0$ or $\partial_\xi\omega( 0+,t)=\infty$ or $\partial_{\xi\xi}\omega (0+,t) =-\infty$.

Let the initial data $\theta_0(x)$ strictly obey $\omega(\xi,0)$, then for every $T>0$, $\theta(x,T)$ strictly obeys the modulus of continuity $\omega(\xi,T)$
provided that for all $t\in ]0, T]$ and $\xi\in \set{\xi>0:\omega(\xi,t)\leq 2\| \theta(\cdot,t)\|_{L^\infty_x}}$, $\omega(\xi,t)$ satisfies
\begin{equation}\label{keyGC}
  \partial_t \omega(\xi,t)> \Omega(\xi,t)\,\partial_\xi\omega(\xi,t) +  D(\xi,t) + 2\epsilon \partial_{\xi\xi}\omega(\xi,t),
\end{equation}
where $\Omega(\xi,t)$ and $D(\xi,t)$ are respectively defined from that for every $x\in\mathbb{R}^d$ and every unit vector $e\in \mathbb{S}^{d-1}$ in \eqref{scena} (noting that we suppress the dependence of $x,e$ in $\Omega(\xi,t)$ and $D(\xi,t)$),
\begin{equation}\label{GC-Om}
  \qquad\quad\Omega(\xi,t):= |(u(x+\xi e,t)-u(x,t))\cdot e|, \quad \textrm{and }
\end{equation}
\begin{equation}\label{GC-D}
  D(\xi,t):= -\big( \LL \theta(x,t) - \LL\theta(x+\xi e,t)\big),
\end{equation}
under the scenario that
\begin{equation}\label{scena}
\begin{split}
  \theta(x,t)-\theta(x+\xi e ,t) = \omega(\xi,t),\quad &\textrm{and}\\
  |\theta(y,t)-\theta(z,t)| \leq \omega(|y-z|,t), \quad &\forall y,z\in\R^d .
\end{split}
\end{equation}

In \eqref{keyGC}, at the points where $\partial_t\omega(\xi,t)$ (or $\partial_\xi\omega(\xi,t)$) does not exist,
the smaller (or larger) value of the one-sided derivative should be taken.
\end{proposition}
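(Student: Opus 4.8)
\emph{Proof strategy.} The plan is to run the by now classical ``first breakthrough time'' scheme of the nonlocal maximum principle (see \cite{KNV,Kis,MX-gQG}); I only sketch the steps and do not carry out the routine calculations. Argue by contradiction and set
\[
  T_0:=\sup\set{s\in[0,T]:\ \theta(\cdot,t)\ \text{obeys}\ \omega(\cdot,t)\ \text{for every}\ t\in[0,s]}.
\]
Since $\theta_0$ obeys $\omega(\cdot,0)$, a short-time continuation argument --- using that $\theta$ is a smooth solution, the continuity of $\omega(\xi,\cdot)$, and conditions (1)--(3) to control the sign of $\theta(x,t)-\theta(x+\xi e,t)-\omega(\xi,t)$ both as $\xi\to 0$ and (via (1)) as $\xi\to\infty$ --- gives $T_0>0$. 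The goal is to show that $\theta(\cdot,T_0)$ still obeys $\omega(\cdot,T_0)$; the same continuation argument then forces $T_0=T$, which is the assertion.

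By continuity from $t<T_0$, the non-strict bound $|\theta(y,T_0)-\theta(z,T_0)|\leq\omega(|y-z|,T_0)$ holds for all $y,z\in\R^d$. Suppose, for contradiction, that it fails to be strict. The first --- and, I expect, the main --- step is then a compactness argument producing $x\in\R^d$, $e\in\mathbb{S}^{d-1}$ and $\xi\in\, ]0,\infty[$ that realize the scenario \eqref{scena} at $t=T_0$: here hypothesis (3) together with $\theta\in C^1$ keeps $\xi$ away from $0$, hypothesis (1) keeps $\xi$ bounded (since $\theta(x,T_0)-\theta(x+\xi e,T_0)\leq 2\|\theta(\cdot,T_0)\|_{L^\infty_x}<\omega(\xi,T_0)$ once $\xi$ is large), and the spatial decay of $\theta(\cdot,T_0)$ confines the base point $x$ to a bounded region. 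In particular $\omega(\xi,T_0)=\theta(x,T_0)-\theta(x+\xi e,T_0)\leq 2\|\theta(\cdot,T_0)\|_{L^\infty_x}$, so \eqref{keyGC} is available at $(\xi,T_0)$.

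The next step is to extract the three contributions from the extremality encoded in \eqref{scena}. In time, $\varphi(t):=\omega(\xi,t)-\bigl(\theta(x,t)-\theta(x+\xi e,t)\bigr)$ is positive for $t<T_0$ and vanishes at $T_0$, so comparing one-sided difference quotients yields $\partial_t\omega(\xi,T_0)\leq\frac{\dd}{\dd t}\bigl(\theta(x,t)-\theta(x+\xi e,t)\bigr)\big|_{t=T_0}$, the smaller one-sided value of $\partial_t\omega$ being understood. In space, the maps $y\mapsto\theta(x,T_0)-\theta(y,T_0)-\omega(|x-y|,T_0)$ and $z\mapsto\theta(z,T_0)-\theta(x+\xi e,T_0)-\omega(|z-x-\xi e|,T_0)$ attain their maxima (equal to $0$) at $y=x+\xi e$ and $z=x$, respectively. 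The first-order conditions force $\nabla\theta(x)=\nabla\theta(x+\xi e)=-\partial_\xi\omega(\xi,T_0)\,e$, whence the transport term obeys
\[
  -\bigl(u(x)\cdot\nabla\theta(x)-u(x+\xi e)\cdot\nabla\theta(x+\xi e)\bigr)=\partial_\xi\omega(\xi,T_0)\,\bigl(u(x)-u(x+\xi e)\bigr)\cdot e\leq\Omega_{x,e}(\xi,T_0)\,\partial_\xi\omega(\xi,T_0)
\]
by \eqref{GC-Om} and $\partial_\xi\omega\geq 0$ (with the larger one-sided value of $\partial_\xi\omega$ taken where it fails to exist). For the viscous term, the second-order condition along the direction $e$ gives $\partial_{ee}\theta(x)-\partial_{ee}\theta(x+\xi e)\leq 2\,\partial_{\xi\xi}\omega(\xi,T_0)$, while the observation that $v\mapsto\theta(x+v)-\theta(x+\xi e+v)$ is maximal at $v=0$ kills the contributions in the directions orthogonal to $e$; summing gives $\epsilon\bigl(\Delta\theta(x)-\Delta\theta(x+\xi e)\bigr)\leq 2\epsilon\,\partial_{\xi\xi}\omega(\xi,T_0)$. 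Finally, $-\bigl(\LL\theta(x,T_0)-\LL\theta(x+\xi e,T_0)\bigr)=D_{x,e}(\xi,T_0)$ by the very definition \eqref{GC-D}.

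Substituting the equation \eqref{appActS} into the time-derivative inequality and assembling these three bounds would produce
\[
  \partial_t\omega(\xi,T_0)\leq\Omega_{x,e}(\xi,T_0)\,\partial_\xi\omega(\xi,T_0)+D_{x,e}(\xi,T_0)+2\epsilon\,\partial_{\xi\xi}\omega(\xi,T_0),
\]
which contradicts the strict inequality \eqref{keyGC} at $(\xi,T_0)$. Hence the non-strict bound at $T_0$ is in fact strict, so $\theta(\cdot,T_0)$ obeys $\omega(\cdot,T_0)$ and, by the continuation argument, $T_0=T$, completing the proof. I expect the genuinely delicate point to be the compactness step: upgrading a mere failure of strictness to a legitimate realizing triple $(x,e,\xi)$ with $\xi\in\, ]0,\infty[$ and a confined base point --- that is, excluding the degenerate behaviours $\xi\to 0$, $\xi\to\infty$ and $|x|\to\infty$, which is exactly what the three hypotheses on $\omega$ are designed to prevent. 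A secondary nuisance, to be carried along throughout, is the bookkeeping of one-sided derivatives of $\omega$ (which is only piecewise $C^2$ in $\xi$ and piecewise $C^1$ in $t$), with the sign conventions indicated in the statement.
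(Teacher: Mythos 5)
The paper does not give its own proof of this proposition; it cites \cite[Proposition 3.2]{MX-gQG} and \cite[Theorem 2.2]{Kis}. Your sketch reproduces exactly the ``first breakthrough time'' scheme used in those references (and originally in \cite{KNV}): define $T_0$ as the last time the strict modulus-of-continuity bound holds, show $T_0>0$ by a short-time continuation argument, at $T_0$ extract a realizing triple $(x,e,\xi)$ by a compactness argument in which (1) controls $\xi\to\infty$, (3) controls $\xi\to 0^+$, and the spatial decay of $\theta(\cdot,T_0)\in H^s$ controls $|x|\to\infty$, and finally compare one-sided time derivatives at the touching pair and use the PDE together with the first- and second-order extremality relations (which pin down $\nabla\theta(x)=\nabla\theta(x+\xi e)=-\partial_\xi\omega\,e$ and yield $\epsilon(\Delta\theta(x)-\Delta\theta(x+\xi e))\leq 2\epsilon\,\partial_{\xi\xi}\omega$, the orthogonal second-order contributions being nonpositive) to contradict \eqref{keyGC}. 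This is precisely the route taken by the cited sources, so your proposal is correct and essentially the same as the paper's (deferred) proof. The one point you could make sharper: the first-order maximization in $\xi$ together with concavity of $\omega$ actually forces $\partial_\xi\omega(\xi^-,T_0)=\partial_\xi\omega(\xi^+,T_0)$ at the touching value $\xi$ (the interval $[-\partial_\xi^+\omega,-\partial_\xi^-\omega]$ must be nonempty), so the ``larger one-sided value'' convention in \eqref{keyGC} is automatically benign at the breakthrough point; it is the ``smaller one-sided value'' of $\partial_t\omega$ that does real work in closing the contradiction.
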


The following lemma is concerned with the estimate of \eqref{GC-D} under the scenario \eqref{scena}.
\begin{lemma}\label{lem-mocdiss}
Assume that the diffusion operator $\LL$ is defined by \eqref{Lexp} with the radial kernel $K$, then we have the following estimates on $D(\xi,t)$ defined by \eqref{GC-D} under the scenario \eqref{scena}.
\begin{enumerate}[(1)]
\item
If $K$ satisfies \eqref{Kcond3} with $m$ satisfying (\textbf{A}1) and (\textbf{A}3), then for any $\xi>0$,
\begin{equation}\label{D2}
\begin{split}
  D(\xi,t)\leq &
  \,C_1 \int_0^{\frac{\xi}{2}}\left(\omega(\xi+2\eta,t)+\omega (\xi-2\eta,t) -2\omega (\xi,t)\right) \frac{m(\eta^{-1})}{\eta}\textrm{d} \eta \\
  & + C_1 \int_{\frac{\xi}{2}}^{\infty} \left(\omega (2\eta+\xi,t)-\omega (2\eta-\xi,t) -2\omega (\xi,t)\right) \frac{m(\eta^{-1})}{\eta}\textrm{d}\eta,
\end{split}
\end{equation}
with $C_1>0$ a constant depending only on $d$.
\item
If $K$ satisfies \eqref{Kcond1}-\eqref{Kcond2} with $m$ satisfying (\textbf{A}1)-(\textbf{A}2), then for every $\xi\in ]0,\frac{c_0}{2}]$,
\begin{equation}\label{Dxi-est}
\begin{split}
  D(\xi,t)\leq  &
  \,C_1 \int_0^{\frac{\xi}{2}}\left(\omega(\xi+2\eta,t)+\omega (\xi-2\eta,t) -2\omega (\xi,t)\right) \frac{m(\eta^{-1})}{\eta}\textrm{d} \eta \\
  & + C_1 \int_{\frac{\xi}{2}}^{\frac{c_0}{2}} \left(\omega (2\eta+\xi,t)-\omega (2\eta-\xi,t) -2\omega (\xi,t)\right) \frac{m(\eta^{-1})}{\eta}\textrm{d}\eta.
\end{split}
\end{equation}
\item
If $K$ satisfies \eqref{Kcond1}, \eqref{Kcond5} with $m$ satisfying (\textbf{A}1)-(\textbf{A}2), then for every $\xi\in]0, \frac{c_0}{2}]$,
\begin{equation}\label{Dxi-est2}
  D(\xi,t)\leq  \,  C_1' \omega(\xi,t)+ \textrm{R.H.S. of \eqref{Dxi-est}} ,
\end{equation}
where $C'_1>0$ is a constant depending on $d$, $\tilde{\alpha}$, $c_0$ and $c_1$.
\end{enumerate}

\end{lemma}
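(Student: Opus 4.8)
The plan is to rewrite $D_{x,e}(\xi,t)$ through the L\'evy (second--difference) form of $\LL$, bound the resulting integrand by the modulus of continuity $\omega$ used both \emph{directly} and in a \emph{refined} form produced by the pinching $\theta(x,t)-\theta(x+\xi e,t)=\omega(\xi,t)$, and then collapse the $\R^d$--integral to a one--dimensional integral in $\eta=|y|$ by integrating out the angular variable via the radial symmetry of $K$. Concretely, fix $x\in\R^d$, $e\in\mathbb{S}^{d-1}$, $\xi>0$, set $x_1=x$, $x_2=x+\xi e$, and drop $t$ from the notation, so that \EQ{scena} reads $\theta(x_1)-\theta(x_2)=\omega(\xi)$ and $|\theta(p)-\theta(q)|\le\omega(|p-q|)$ for all $p,q$. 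Since $K$ is radial, $\LL$ admits the absolutely convergent representation $\LL f(z)=\tfrac12\,\mathrm{p.v.}\int_{\R^d}\big(2f(z)-f(z+y)-f(z-y)\big)K(y)\,\dd y$, whence, writing $\Phi(y):=-2\omega(\xi)+\big(\theta(x_1+y)-\theta(x_2+y)\big)+\big(\theta(x_1-y)-\theta(x_2-y)\big)$,
\[
D_{x,e}(\xi,t)=\LL\theta(x_2)-\LL\theta(x_1)=\tfrac12\int_{\R^d}\Phi(y)\,K(y)\,\dd y .
\]

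Next I would record two pointwise estimates for $\Phi$. Applying $\omega$ directly to the pairs $(x_1\pm y,x_2\pm y)$, which lie at distance $\xi$, gives $\Phi(y)\le0$ for every $y$. Regrouping instead as $\Phi(y)=-2\omega(\xi)+\big(\theta(x_1+y)-\theta(x_2-y)\big)+\big(\theta(x_1-y)-\theta(x_2+y)\big)$ and inserting the pinch — e.g.\ $\theta(x_1+y)-\theta(x_1)=\big(\theta(x_1+y)-\theta(x_2)\big)-\omega(\xi)\le\omega(|y-\xi e|)-\omega(\xi)$ and its three analogues — one gets, using the concavity and monotonicity of $\omega$, a bound whose angular average over $\mathbb{S}^{d-1}$ is controlled by a second--difference expression in $\omega$. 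Then I would pass to polar coordinates and insert $K(y)\asymp|y|^{-d}m(|y|^{-1})$: the factor $|y|^{-d}|y|^{d-1}=|y|^{-1}$ survives the angular integration and, with \LEM{lem-mfact} (which makes $m$ essentially doubling), produces the weight $m(\eta^{-1})/\eta$; splitting according to the sign of $\xi-2\eta$, i.e.\ at $\eta=\xi/2$, yields \EQ{D2} in the case \EQ{Kcond3}.

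The three cases are then separated by the treatment of $\{|y|>c_0\}$. Under \EQ{Kcond1}--\EQ{Kcond2} one has $K\ge0$ everywhere, and on $\{|y|>c_0/2\}$ also $\Phi\le0$, so this region is simply discarded; running the previous reduction on $\{|y|\le c_0/2\}$ (where the sharp two--sided bound on $K$ holds, and $\xi\le c_0/2$) gives \EQ{Dxi-est} with the outer integral cut at $c_0/2$. Under \EQ{Kcond1}, \EQ{Kcond5} one still discards $\{c_0/2<|y|\le c_0\}$ (there $K\ge0$, $\Phi\le0$), whereas on $\{|y|>c_0\}$ only $|K(y)|\le c_1|y|^{-d-\tilde\alpha}$ and $|\Phi(y)|\le4\omega(\xi)$ (the direct bound), so that this part contributes at most $\tfrac12\cdot4\omega(\xi)\,c_1|\mathbb{S}^{d-1}|c_0^{-\tilde\alpha}/\tilde\alpha=:C_1'\,\omega(\xi)$, i.e.\ the extra term in \EQ{Dxi-est2}.

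The main obstacle is the angular integration in the intermediate/far regime $\eta>\xi/2$: producing the \emph{asymmetric} expression $\omega(2\eta+\xi)-\omega(2\eta-\xi)-2\omega(\xi)$ — rather than a symmetric second difference — cannot be done by a crude combination of the triangle inequality and concavity, since for directions $y$ nearly orthogonal to $e$ the naive bound on $\Phi$ is far too large and even of the wrong sign; one must use the pinch together with the \emph{global} validity of the modulus $\omega$ (not merely its values at distances $\xi$ and $|y|$) and the exact spherical measure. This is, for a general radial kernel $K$, the counterpart of the delicate dissipation estimate underlying the nonlocal maximum principle of \cite{KNV,Kis}, and it is where the bulk of the work lies; the reduction steps, the case distinctions, and the bookkeeping of the constants $C_1,C_1'$ are otherwise routine.
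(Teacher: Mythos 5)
Your reduction $D_{x,e}(\xi,t)=\tfrac12\int_{\R^d}\Phi(y)K(y)\,\dd y$ with $\Phi\le 0$ is correct, the three-way case split is the right organization, and the far-field tail in case (3) ($|\Phi|\le 4\omega(\xi)$ against $|K(y)|\le c_1|y|^{-d-\tilde\alpha}$ on $\{|y|\ge c_0\}$) does produce the extra $C_1'\omega(\xi)$. But the lemma is not proved: the entire content of \EQ{D2}--\EQ{Dxi-est} is the passage from the $d$-dimensional integral to the one-dimensional bound with the symmetric second difference for $\eta<\xi/2$, the \emph{asymmetric} expression $\omega(2\eta+\xi)-\omega(2\eta-\xi)-2\omega(\xi)$ for $\eta>\xi/2$, and the weight $m(\eta^{-1})/\eta$ — and this is exactly the step you label as ``where the bulk of the work lies'' and do not carry out. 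Worse, the route you sketch for it (polar coordinates plus an angular average at fixed radius $|y|=\eta$) cannot deliver it: as you yourself observe, for directions $y$ nearly orthogonal to $e$ the pointwise bound $\Phi(y)\le\omega(|2y-\xi e|)+\omega(|2y+\xi e|)-2\omega(\xi)$ is nonnegative, so no direction-by-direction estimate followed by angular integration can yield the required nonpositive one-dimensional integrand.

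The paper does not reprove this reduction either; it invokes \cite[Lemma 2.3]{DKSV} (which goes back to the appendix of \cite{KNV}) to obtain \EQ{D3}, where the one-dimensional variable is \emph{not} the radius but the component of $y$ along $e$: one writes $y=\eta e+\nu$, $\nu\perp e$, pairs $y$ with $-\eta e+\nu$, and regroups the four values of $\theta$ so that each difference involves two points sharing the same transverse component $\nu$, whence the relevant distances are exactly $\xi\pm2\eta$ and $2\eta\pm\xi$ independently of $\nu$; the kernel is then integrated out over the slab, producing $\widetilde K(\eta)=\int_{\R^{d-1}}K(\eta,\nu)\,\dd\nu$. The paper's own contribution in the proof is then (a) the lower bound $\widetilde K(\eta)\ge C_1\,m(\eta^{-1})/\eta$ via the transverse integral of \EQ{Kcond1} or \EQ{Kcond3} together with \LEM{lem-mfact} — a substitution that is legitimate only because both $\omega$-expressions in \EQ{D3} are nonpositive (concavity for the near part, subadditivity $\omega(2\eta+\xi)-\omega(2\eta-\xi)\le\omega(2\xi)\le 2\omega(\xi)$ for the far part), a sign point your write-up never records; and (b) in case (3), a split of the $(\eta,\nu)$ slab coordinates into $\{\eta\ge c_0/2\}$, $\{\eta\le c_0/2,|\nu|\le c_0/2\}$ and $\{\eta\le c_0/2,|\nu|\ge c_0/2\}$, the last of which contributes another $O(\omega(\xi))$ term that a splitting by $|y|$ alone does not isolate. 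To complete your proof you must either reproduce the DKSV reduction or cite it explicitly, and then supply (a) and (b); as written, the proposal is a framework with the central estimate missing.
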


\begin{proof}[Proof of Lemma \ref{lem-mocdiss}]
  According to \eqref{Lexp} and \eqref{scena}, we see that
\begin{equation}\label{Dexp}
  D(\xi,t)= \int_{\R^d} K(y) \left( \theta(x+y,t)-\theta(x+\xi e+ y,t)-\omega(\xi,t)\right)\dd y,
\end{equation}
where the integral will be understood in the sense of principle value if needed.
By arguing as the proof of \cite[Lemma 2.3]{DKSV}, we get
\begin{equation}\label{D3}
\begin{split}
  D(\xi,t) \leq &
  \int_0^{\frac{\xi}{2}}\left(\omega(\xi+2\eta,t)+\omega (\xi-2\eta,t) -2\omega (\xi,t)\right) \widetilde{K}(\eta)\textrm{d} \eta \\
  & + \int_{\frac{\xi}{2}}^{\infty} \left(\omega (2\eta+\xi,t)-\omega (2\eta-\xi,t) -2\omega (\xi,t)\right)\widetilde{K}(\eta) \textrm{d}\eta,
\end{split}
\end{equation}
with
$\widetilde{K}(\eta)=\int_{\R^{d-1}}K(\eta,\nu)\dd \nu$. Note that due to the concavity of $\omega(\cdot,t)$, both terms $\omega(\xi+2\eta,t)+\omega (\xi-2\eta,t) -2\omega (\xi,t)$ and
$\omega (2\eta+\xi,t)-\omega (2\eta-\xi,t) -2\omega (\xi,t)$ are non-positive.
\\
(1) If $K$ satisfies \eqref{Kcond3} with $m$ satisfying (\textbf{A}1) and (\textbf{A}3), by using \eqref{mProp1}, we infer that for every $\eta>0$,
\begin{align}\label{eq:estim1}
  \widetilde{K}(\eta) & \geq c_1^{-1} \int_{\R^{d-1}} \frac{m\left((\eta^2 + |\nu|^2)^{-1/2}\right)}{(\eta^2 + |\nu|^2)^{d/2}}\dd \nu \nonumber \\
  & \geq c_1^{-1} \eta^\alpha m(\eta^{-1}) \int_{\R^{d-1}} \frac{1}{\left( \eta^2+|\nu|^2\right)^{(d+\alpha)/2}}\dd \nu \nonumber \\
  & \geq c_1^{-1} \frac{m(\eta^{-1})}{\eta} \int_{\R^{d-1}} \frac{1}{\left( 1+ |\nu'|^2\right)^{(d+\alpha)/2}} \dd\nu'
  \geq C_1  \frac{m(\eta^{-1})}{\eta},
\end{align}
where in the last inequality we used
\begin{equation*}
  c_1^{-1}\int_{\R^{d-1}}\frac{1}{(1+|\nu'|^2)^{(d+\alpha)/2}}\dd\nu'\geq c_1^{-1}\int_{|\nu'|\leq 1}\frac{1}{2^{(d+\alpha)/2}}\dd \nu'\geq c_1^{-1}\frac{1}{2^{(d+1)/2}}|B_1(0)|=C_1.
\end{equation*}
Inserting the above estimate into \eqref{D3} leads to \eqref{D2}.
\\
(2) If $K$ satisfies \eqref{Kcond1}-\eqref{Kcond2} with $m$ satisfying (\textbf{A}1)-(\textbf{A}2) and $\xi\leq c_0/2$ is concerned,
we mainly consider the scope $\eta \in ]0,\frac{c_0}{2}]$ and $|\nu|\in ]0, \frac{c_0}{2}]$ so that $(\eta^2+|\nu|^2)^{1/2}\in ]0,c_0]$,
thus similarly as \eqref{eq:estim1}, we get that for all $\eta\in ]0,\frac{c_0}{2}]$,
\begin{equation*}
\begin{split}
  \widetilde{K}(\eta) & \geq c_1^{-1} \int_{\nu\in\R^{d-1},|\nu|\leq \frac{c_0}{2}} \frac{m\left( (\eta^2 + |\nu|^2)^{-1/2} \right)}{(\eta^2 + |\nu|^2)^{d/2}}\dd\nu\\
  & \geq c_1^{-1} \frac{m(\eta^{-1})}{\eta} \int_{\nu'\in\R^{d-1},|\nu' |\leq 1} \frac{1}{(1+|\nu'|^2)^{\frac{d+\alpha}{2}}}\dd \nu' \geq C_1 \frac{m(\eta^{-1})}{\eta},
\end{split}
\end{equation*}
which ensures \eqref{Dxi-est}.
\\
(3) If $K$ satisfies \eqref{Kcond1}, \eqref{Kcond5} with $m$ satisfying (\textbf{A}1)-(\textbf{A}2), and $\xi\leq \frac{c_0}{2}$ is concerned, we divide the $(\eta,\nu)$ integral region of the R.H.S. of \eqref{D3}
into several parts $\set{\eta\in [\frac{c_0}{2},\infty[}$, $\set{\eta\in ]0,\frac{c_0}{2}], |\nu|\in ]0,\frac{c_0}{2}]}$
and $ \set{\eta\in]0,\frac{c_0}{2}],|\nu|\in [\frac{c_0}{2},\infty[}$. The part $\eta\in ]0,\frac{c_0}{2}]$ and $|\nu|\in ]0,\frac{c_0}{2}]$ can be treated as above and the bound is the R.H.S. of \eqref{Dxi-est}.
For $\eta\geq \frac{c_0}{2}$, the kernel $K(\eta,\nu)$ may be non-positive, and from \eqref{Kcond5} we deduce
\begin{equation*}
\begin{split}
  - \widetilde{K}(\eta) & \leq - \int_{(\eta^2+|\nu|^2)^{1/2}\leq c_0} K(\eta,\nu)\,\dd \nu - \int_{(\eta^2+|\nu|^2)^{1/2}\geq c_0} K(\eta,\nu)\,\dd\nu \\
  & \leq c_1 \int_{\R^{d-1}} \frac{1}{(\eta^2 + |\nu|^2)^{\frac{d+ \tilde\alpha}{2}}} \dd \nu
  \leq c_1 \frac{1}{\eta^{1+\tilde\alpha}} \int_{\R^{d-1}}\frac{1}{(1+|\nu'|^2)^{\frac{d+\tilde\alpha}{2}}}\dd \nu' \leq c_1C_d \frac{1}{\eta^{1+\tilde\alpha}} ,
\end{split}
\end{equation*}
and thus the contribution from this part is
\begin{equation*}
\begin{split}
  \int_{\frac{c_0}{2}}^\infty \left(2\omega(\xi,t) +\omega(2\eta-\xi,t)- \omega(2\eta+\xi,t)\right) \left(-\widetilde{K}(\eta)\right) \dd\eta
  \leq\; c_1 C_d 2 \omega(\xi,t)  \int_{\frac{c_0}{2}}^\infty \frac{1}{\eta^{1+\tilde\alpha}} \dd \eta \leq \frac{C'}{2} \omega(\xi,t).
\end{split}
\end{equation*}
For the part $\eta\in ]0,\frac{c_0}{2}]$ and $|\nu|\geq\frac{c_0}{2}$, from \eqref{Kcond5} we get
\begin{equation*}
\begin{split}
  -\int_{\nu\in\R^{d-1},|\nu|\geq \frac{c_0}{2}}K(\eta,\nu)\dd\nu & \leq   -\int_{\nu\in\R^{d-1},|\nu|\geq \frac{c_0}{2}, (\eta^2+|\nu|^2)^{1/2}\geq c_0}K(\eta,\nu)\dd\nu \\
  & \leq c_1 \int_{\nu\in\R^{d-1},|\nu|\geq \frac{c_0}{2}}\frac{1}{(\eta^2 +|\nu|^2)^{\frac{d+\tilde\alpha}{2}}} \dd \nu \leq C_{d,\tilde\alpha} c_1 c_0^{\tilde\alpha},
\end{split}
\end{equation*}
and thus the contribution from this part is bounded by
\begin{align*}
  & c_1 c_0^{\tilde\alpha} C_{d,\tilde\alpha}
  \bigg( \int_0^{\frac{\xi}{2}} \Big( 2\omega(\xi,t) - \omega(\xi+2\eta,t)-\omega(\xi-2\eta,t)\Big) + \int_{\frac{\xi}{2}}^{\frac{c_0}{2}} \Big( 2\omega(\xi,t) + \omega(2\eta-\xi,t)-\omega(2\eta+\xi,t)\Big)\bigg) \\
   &\leq \, c_1 c_0^{\tilde\alpha} C_{d,\tilde\alpha} \Big( \omega(\xi,t) \frac{\xi}{2} + 2 \omega(\xi,t)\frac{c_0-\xi}{2}\Big) \leq \frac{C_1'}{2} \omega(\xi,t).
\end{align*}
Hence, gathering the above estimates yields \eqref{Dxi-est2}.
\end{proof}

Next we consider the estimation of \eqref{GC-Om} under the scenario \eqref{scena}.
\begin{lemma}\label{lem-mocdrf}
  Assume that $u= \mathcal{P}(\theta)$ is defined by \eqref{uexp},
and the diffusion operator $\LL$ is given by \eqref{Lexp} with the radial kernel $K$,
then we have the following estimates on $\Omega(\xi,t)$ under the scenario \eqref{scena}.
\begin{enumerate}[(1)]
\item
If $K$ satisfies \eqref{Kcond3} with $m$ satisfying (\textbf{A}1) and (\textbf{A}3), then for all $\xi>0$,
\begin{equation}\label{Ome-es1}
  \Omega(\xi,t) \leq -\frac{C_2 }{m(\xi^{-1})} D(\xi,t) + C_2 \omega(\xi,t) + C_2 \xi \int_\xi^\infty \frac{\omega(\eta,t)}{\eta^2} \dd \eta,
\end{equation}
with $C_2>0$ defending only on $d$ (and $|a|,|\Psi|$).
\item
If $K$ satisfies \eqref{Kcond1}-\eqref{Kcond2} with $m$ satisfying (\textbf{A}1)-(\textbf{A}2), then we also get \eqref{Ome-es1} for all $0<\xi\leq \frac{c_0}{2}$.
\item
If $K$ satisfies \eqref{Kcond1} and \eqref{Kcond5} with $m$ satisfying (\textbf{A}1)-(\textbf{A}2), then for all $0<\xi \leq \frac{c_0}{2}$,
\begin{equation}\label{Ome-es1.0}
  \Omega(\xi,t) \leq - \frac{C_2}{m(\xi^{-1})} D(\xi,t) + \left(  C_2' + C_2 \right) \omega(\xi,t) +
  C_2 \xi \int_\xi^\infty \frac{\omega(\eta,t)}{\eta^2}\dd \eta,
\end{equation}
with some $C_2'>0$ depending on $d$, $\alpha$, $\tilde{\alpha}$, and $c_0,c_1$.
\item
There exists a constant $C_3>0$ depending only on $d,|a|,|\Psi|$ such that
\begin{equation}\label{Ome-es2}
  \Omega(\xi,t) \leq C_3 \omega(\xi,t) + C_3 \int_0^\xi \frac{\omega(\eta,t)}{\eta}\dd\eta
  + C_3 \xi \int_\xi^\infty \frac{\omega(\eta,t)}{\eta^2} \dd \eta.
\end{equation}
\end{enumerate}

\end{lemma}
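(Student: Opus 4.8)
The plan is to decompose $\bigl(u(x+\xi e,t)-u(x,t)\bigr)\cdot e$ into a pointwise term plus a Calder\'on--Zygmund singular integral, split the latter at the scale $|y|\sim\xi$, and then --- for parts (1)--(3) --- to exploit that the scenario \EQ{scena} forces the weight appearing in the near-field of the drift to coincide with the (sign-definite) weight appearing in the dissipation $D_{x,e}$. By \EQ{uexp},
\[
\bigl(u(x+\xi e)-u(x)\bigr)\cdot e=(a\cdot e)\bigl(\theta(x+\xi e)-\theta(x)\bigr)+I,\qquad I:=\mathrm{p.v.}\int_{\R^d}(S(y)\cdot e)\bigl(\theta(x+\xi e+y)-\theta(x+y)\bigr)\dd y,
\]
the pointwise term being $\le|a|\,\omega(\xi,t)$ by \EQ{scena}. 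I would repeatedly use the two standard features of $S(y)=\Psi(y/|y|)/|y|^d$: the mean-zero property $\int_{\mathbb S^{d-1}}\Psi=0$, so that $\int_{r_1<|y|<r_2}S(y)\dd y=0$ (allowing one to subtract an arbitrary constant from the integrand over a symmetric domain), and the smoothness estimate $|S(y-\xi e)-S(y)|\lesssim\xi|y|^{-d-1}$ for $|y|\ge2\xi$.

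For the general bound (4): split $I=I_{\mathrm{near}}+I_{\mathrm{far}}$ at $|y|\sim\xi$. On the near part, subtracting the constants $\theta(x)$ and $\theta(x+\xi e)$ via the cancellation property turns the integrand into $(S(y)\cdot e)\bigl([\theta(x+\xi e+y)-\theta(x+\xi e)]-[\theta(x+y)-\theta(x)]\bigr)$, each bracket $\le\omega(|y|,t)$, so $|I_{\mathrm{near}}|\lesssim\int_0^{C\xi}\frac{\omega(\eta,t)}{\eta}\dd\eta\lesssim\int_0^\xi\frac{\omega(\eta,t)}{\eta}\dd\eta+C\omega(\xi,t)$. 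For $I_{\mathrm{far}}$ I change variables in the $\theta(x+\xi e+y)$-piece and use the kernel-difference estimate together with the cancellation property to subtract $\theta(x)$, the handful of transition regions near $|y|\sim\xi$ and near $y=\xi e$ each contributing $O(\omega(\xi,t))$; this gives $|I_{\mathrm{far}}|\lesssim\omega(\xi,t)+\xi\int_\xi^\infty\frac{\omega(\eta,t)}{\eta^2}\dd\eta$. Adding these and the pointwise term yields \EQ{Ome-es2}.

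For parts (1)--(3) the refinement is to keep $I_{\mathrm{near}}$ in a shape comparable with $D_{x,e}$. Using the cancellation property with the specific constant $\omega(\xi,t)$ and setting
\[
\psi(y):=\theta(x+y,t)-\theta(x+\xi e+y,t)-\omega(\xi,t)=[\theta(x+y)-\theta(x)]-[\theta(x+\xi e+y)-\theta(x+\xi e)],
\]
one obtains $I_{\mathrm{near}}=-\mathrm{p.v.}\int_{|y|\le\xi}(S(y)\cdot e)\psi(y)\dd y$, where, crucially, \EQ{scena} gives $\psi(0)=0$, $\psi\le0$, and $-\psi(y)=|\psi(y)|\le2\omega(|y|,t)$ (and $\le2\omega(\xi,t)$ for every $y$). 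Since $D_{x,e}(\xi,t)=\int_{\R^d}K(y)\psi(y)\dd y$ by \EQ{Dexp}, and since for $|y|\le\xi$ --- legitimate for all $\xi>0$ in part (1) and for $\xi\le c_0/2$ in parts (2),(3) --- the lower bound on $K$ in \EQ{Kcond1} or \EQ{Kcond3} together with the monotonicity of $m$ gives $K(y)\ge c_1^{-1}m(|y|^{-1})|y|^{-d}\ge c_1^{-1}m(\xi^{-1})|y|^{-d}$, i.e. $|y|^{-d}\le c_1K(y)/m(\xi^{-1})$, I estimate
\[
|I_{\mathrm{near}}|\le2\|\Psi\|_\infty\int_{|y|\le\xi}\frac{-\psi(y)}{|y|^d}\dd y\le\frac{2c_1\|\Psi\|_\infty}{m(\xi^{-1})}\int_{|y|\le\xi}K(y)\bigl(-\psi(y)\bigr)\dd y=-\frac{C_2}{m(\xi^{-1})}\int_{|y|\le\xi}K(y)\psi(y)\dd y.
\]
When $K\ge0$ everywhere (parts (1),(2)), the omitted part $\int_{|y|>\xi}K\psi$ is $\le0$, so the last integral is $\ge D_{x,e}(\xi,t)$ and $|I_{\mathrm{near}}|\le-\frac{C_2}{m(\xi^{-1})}D_{x,e}(\xi,t)$; when $K$ may be negative on $\{|y|\ge c_0\}$ (part (3), under \EQ{Kcond5}) I instead bound $\bigl|\int_{|y|\ge c_0}K\psi\bigr|\le2c_1\omega(\xi,t)\int_{|y|\ge c_0}|y|^{-d-\tilde\alpha}\dd y=C\omega(\xi,t)$ using $|\psi|\le2\omega(\xi,t)$, and since $\xi\le c_0/2$ forces $1/m(\xi^{-1})\le1/m\bigl((c_0/2)^{-1}\bigr)=C$, this costs only an extra $C_2'\omega(\xi,t)$. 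Combining with the $I_{\mathrm{far}}$ bound from part (4) gives \EQ{Ome-es1} in parts (1),(2) and \EQ{Ome-es1.0} in part (3).

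The main obstacle is exactly the observation behind the refinement: in general the near field of the drift genuinely costs $\int_0^\xi\frac{\omega(\eta,t)}{\eta}\dd\eta$, which is strictly larger than $\omega(\xi,t)$ and cannot be absorbed on its own; it is only the special structure of the breakthrough scenario --- that $S$ is integrated against the same sign-definite weight $\psi$ defining $D_{x,e}$ --- that, after the Calder\'on--Zygmund cancellation, lets this contribution be controlled by $-D_{x,e}/m(\xi^{-1})$, and moreover without any $(1-\alpha+\sigma)$- or $(1-\alpha)$-type loss. The residual work is bookkeeping: treating the transition annulus $\{\xi<|y|<2\xi\}$ and the shifted singularity $\{y\approx\xi e\}$ in $I_{\mathrm{far}}$ so that no bare $\theta(x)$- or $\theta(x+\xi e)$-proportional terms survive (they must cancel, since $(u(x+\xi e)-u(x))\cdot e$ sees $\theta$ only through differences), and, in the sign-indefinite case, controlling the tail of $K$ by the crude bound $|\psi(y)|\le2\omega(\xi,t)$.
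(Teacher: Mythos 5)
Your proposal is correct, and the overall architecture matches the paper's: decompose $\bigl(u(x+\xi e)-u(x)\bigr)\cdot e$ into the pointwise $(a\cdot e)$-term plus a singular integral, split the latter at scale $\xi$, use the classical $C\xi\int_\xi^\infty\omega(\eta)\eta^{-2}\dd\eta$ bound for the far field, and exploit the scenario-specific sign condition $\psi\le0$ together with the kernel lower bound \EQ{Kcond1}/\EQ{Kcond3} to trade the near-field drift against the dissipation. The one place where you genuinely diverge from the paper is in how the near-field/dissipation comparison is executed, and it is worth noting: the paper takes $I(\xi)$ over the ball $|y|\le2\xi$, forms the combination $I(\xi)+\frac{B}{m(\xi^{-1})}D_{x,e}(\xi)$, and proves that for $B=2c_1\max|\Psi|$ the \emph{pointwise} integrand is non-positive on $|y|\le2\xi$; this pointwise comparison requires the quasi-homogeneity of $m$, i.e.\ both monotonicity statements of Lemma~\ref{lem-mfact} (derived from \EQ{mDec}/\EQ{mDec2}), to handle the annulus $\xi\le|y|\le2\xi$ where the naive monotonicity of $m$ points the wrong way. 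You instead cut at $|y|=\xi$, bound $|I_{\mathrm{near}}|\le\|\Psi\|_\infty\int_{|y|\le\xi}|y|^{-d}(-\psi)\le\frac{c_1\|\Psi\|_\infty}{m(\xi^{-1})}\int_{|y|\le\xi}K(-\psi)$ using only that $m$ is non-decreasing, and then dominate the partial dissipation by the full one (exact when $K\ge0$; with an $O(\omega(\xi))$ correction from the tail $|y|\ge c_0$ in part (3), which is absorbed as $C_2'\omega(\xi)$ since $m(\xi^{-1})\ge m(2/c_0)$). The cost of your cleaner near-field bound is that the annulus $\xi<|y|<2\xi$ migrates into the far-field term, where it contributes an extra $O(\omega(\xi))$ — a term already present in the target. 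Net, your route is a little more elementary in the near field at no loss; both yield \EQ{Ome-es1}--\EQ{Ome-es2} with constants of the same quality.
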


Notice that for $\LL = |D|^\alpha$ and $u = H(\theta)$ with $H$ the 1D Hilbert transform, an estimate similar to \eqref{Ome-es1} was obtained in \cite[Lemma 2.7]{Do}.

\begin{proof}[Proof of Lemma \ref{lem-mocdrf}]
  For simplicity, we suppress the time variable $t$ in $\omega(\xi,t)$, $\Omega(\xi,t)$ and $D(\xi,t)$.
By virtue of \eqref{uexp}, we see that
\begin{equation*}
\begin{split}
  |u(x)-u(x+\xi e)| &  = \left| a\, \omega(\xi) + \mathrm{p.v.}\,\int_{\R^d} \frac{\Psi(\hat y)}{|y|^d} \theta(x+ y) \dd y - \mathrm{p.v.}\,\int_{\R^d} \frac{\Psi(\hat y)}{|y|^d} \theta(x+\xi e+ y) \dd y  \right| \\
  & \leq |a|\, \omega(\xi) + |I(\xi)| + |II(\xi)|,
\end{split}
\end{equation*}
with $\hat y=\frac{y}{|y|}\in \mathbb{S}^{d-1}$, and
\begin{equation}\label{Ixi}
  I(\xi) := \mathrm{p.v.}\, \int_{|y|\leq 2\xi} \frac{\Psi(\hat y)}{|y|^d}\theta(x+y) \dd y - \mathrm{p.v.} \,\int_{|y|\leq 2\xi} \frac{\Psi(\hat y)}{|y|^d} \theta(x+\xi e+ y) \dd y ,
\end{equation}
\begin{equation*}
 \textrm{and}\;\, II(\xi) := \int_{|y|\geq 2\xi} \frac{\Psi(\hat y)}{|y|^d}\theta(x+y) \dd y - \int_{|y|\geq 2\xi} \frac{\Psi(\hat y)}{|y|^d} \theta(x+\xi e+ y) \dd y .
\end{equation*}
First we note that the estimation of $II(\xi)$ and the proof of \eqref{Ome-es2} are classical, and one can refer to \cite[Lemma]{KNV} or \cite[Lemma 3.2]{MX-mQG} to see that
\begin{equation*}
  |II(\xi)| \leq C \xi \int_\xi^\infty \frac{\omega(\eta)}{\eta^2} \dd \eta, \quad \textrm{and}\quad
  |I(\xi)|\leq C \int_0^\xi \frac{\omega(\eta)}{\eta}\dd\eta.
\end{equation*}
Thus for the statements (1)-(3), it suffices to estimate $I(\xi)$ by virtue of $D(\xi)$. Thanks to the zero-average property of $\Psi(\hat y)$ and the scenario \eqref{scena}, we have
\begin{equation*}
\begin{split}
  I(\xi) & =\, \int_{|y|\leq 2\xi} \frac{\Psi(\hat y)}{|y|^d}(\theta(x+y)-\theta(x)) \dd y
  -\,\int_{|y|\leq 2\xi} \frac{\Psi(\hat y)}{|y|^d} (\theta(x+\xi e+ y)-\theta(x+\xi e)) \dd y  \\
  & = \, \int_{|y|\leq 2\xi} \frac{\Psi(\hat y)}{|y|^d}\big(\theta(x+y)-\theta(x+\xi e +y) -\omega(\xi)\big) \dd y ,
\end{split}
\end{equation*}
where the integral will be understood in the sense of principle value if needed.
\\
(1) If $K$ satisfies \eqref{Kcond3} with $m$ satisfying (\textbf{A}1) and (\textbf{A}3), recalling that $D(\xi)$ defined by \eqref{GC-D} has the formula \eqref{Dexp},
and using \eqref{mProp1}-\eqref{mProp2},
we obtain that for some constant $B >0$ chosen later,
\begin{align*}
  I(\xi) + \frac{ B}{m(\xi^{-1})} D(\xi) \leq \, & \, \int_{|y|\leq 2\xi} \Big( \frac{\Psi(\hat y)}{|y|^d} - c_1^{-1} \frac{B}{m(\xi^{-1})} \frac{m(|y|^{-1})}{|y|^d}\Big)
  \big(\omega(\xi) +\theta(x+\xi e +y)-\theta(x+y)\big) \dd y \\
  &  \, -\int_{|y|\geq 2\xi} K(y) \big(\omega(\xi) +\theta(x+\xi e +y)-\theta(x+y)\big)\dd y \\
  \leq \, & \, \int_{|y|\leq 2\xi} \Big( \frac{\Psi(\hat y)}{|y|^d} -  2^{-\sigma} c_1^{-1}B \frac{ \xi^{\alpha-\sigma}}{|y|^{d+\alpha-\sigma}}\Big)
  \big(\omega(\xi) +\theta(x+\xi e +y)-\theta(x+y)\big) \dd y \\
  \leq \, & \, \int_{|y|\leq 2\xi} \big( 2^{\alpha-\sigma} \Psi(\hat y) -2^{-\sigma} c_1^{-1}B \big)\frac{ \xi^{\alpha-\sigma}}{|y|^{d+\alpha-\sigma}}
  \big(\omega(\xi) +\theta(x+\xi e +y)-\theta(x+y)\big) \dd y,
\end{align*}
where in the third line we used $|y|^{\alpha-\sigma} m(|y|^{-1}) \geq (2\xi)^{\alpha-\sigma} m((2\xi)^{-1})\geq 2^{-\sigma}\xi^{\alpha-\sigma} m(\xi^{-1})$ for all $0<|y|\leq 2\xi$.
Thus by choosing $B=2 c_1\left( \max_{\hat y\in \mathbb{S}^{d-1}}|\Psi(\hat y)|\right)$, we get
\begin{equation}\label{I-es1}
  |I(\xi)| \leq -\frac{B}{m(\xi^{-1})} D(\xi).
\end{equation}
\\
(2) If $K$ satisfies \eqref{Kcond1}-\eqref{Kcond2} with $m$ satisfying (\textbf{A}1)-(\textbf{A}2), and we only consider $\xi$ in the range $0<\xi \leq c_0 /2$,
then due to that $K\geq 0$ on all $\R^d\setminus\{0\}$, we similarly obtain \eqref{I-es1}.
\\
(3) If $K$ satisfies \eqref{Kcond1} and \eqref{Kcond5} with $m$ satisfying (\textbf{A}1)-(\textbf{A}2), then for the same $B$ as above and for all $0<\xi \leq c_0/2$,
\begin{equation*}
\begin{split}
  I(\xi) + \frac{B}{m(\xi^{-1})} D(\xi) & \leq \frac{B}{m(\xi^{-1})} \int_{|y|\geq 2\xi}\left( \omega(\xi)+\theta(x+\xi e +y) -\theta(x+y)\right) \left(-K(y) \right) \dd y \\
  & \leq \frac{c_1 B}{m(2/c_0)} \int_{|y|\geq c_0}\left( \omega(\xi)+\theta(x+\xi e+y) -\theta(x+y)\right) \frac{1}{|y|^{d+\tilde\alpha}}\dd y .
\end{split}
\end{equation*}
By arguing as obtaining \eqref{Dxi-est2}, we find that
\begin{equation*}
  I(\xi) + \frac{B}{m(\xi^{-1})}D(\xi) \leq C_2' \omega(\xi).
\end{equation*}

Therefore, collecting the above estimates leads to the desired results \eqref{Ome-es1}-\eqref{Ome-es2}.
\end{proof}

\section{Proof of Proposition \ref{prop-evRe}: uniform-in-$\epsilon$ eventual H\"older estimate of the $\epsilon$-regularized solution}\label{sec-evRe}

\subsection{Sketch of the main proof}

In this section, we denote $\theta^\epsilon\in C([0,\infty[; H^s(\R^d))$, $s>1+\frac{d}{2}$
to be a smooth solution for the regularized drift-diffusion equation
\begin{equation}\label{reDDeq2}
  \partial_t \theta^\epsilon + u^\epsilon\cdot\nabla \theta^\epsilon + \LL \theta^\epsilon - \epsilon \Delta \theta^\epsilon = 0,\quad u^\epsilon = \mathcal{P}(\theta^\epsilon),\quad  \theta^\epsilon|_{t=0} = \theta^\epsilon_0= \phi_\epsilon*\big(\theta_01_{1/\epsilon}\big),
\end{equation}
where $\epsilon>0$, $\theta_0\in L^\infty$, $1_{B_{1/\epsilon}}$ is the indicator function on the ball $B_{1/\epsilon}$,
$\phi\in C^\infty_c(\R^d)$ is a test function satisfying $\int_{\R^d}\phi=1$,
and $\phi_\epsilon(x)=\epsilon^{-d}\phi(\epsilon^{-1}x)$. From the maximus principle \eqref{theMP1}, we immediately get the uniform $L^\infty$-estimate $\|\theta^\epsilon(t)\|_{L^\infty(\R^d)}\leq \|\theta^\epsilon_0\|_{L^\infty(\R^d)}\leq \|\theta_0\|_{L^\infty(\R^d)}$ for all $t\geq 0$.

Our main method is the nonlocal maximum principle. We first introduce a nonnegative function that for $\alpha\in ]0,1]$, $\sigma\in [0,\alpha[$ and $\beta\in ]1-\alpha+\sigma,1[$,
\begin{equation}\label{MOC2}
\omega(\xi)=
\begin{cases}
  \kappa\, m(\delta^{-1}) \delta^{1-\beta} \xi^\beta, & \quad \textrm{for}\;\; 0\leq \xi \leq \delta, \\
  \kappa\, m(\delta^{-1})\delta + \gamma \int_\delta^\xi m(\eta^{-1})\dd\eta, & \quad \textrm{for} \;\; \xi > \delta,
\end{cases}
\end{equation}
with $\delta>0$, $0<\gamma<\kappa<1$ chosen later.

We show that $\omega(\xi)$ is indeed a modulus of continuity (MOC) satisfying the needing properties. Clearly, $\omega(0+)=0$, $\omega'(0+)= \kappa \beta m(\delta^{-1}) \delta^{1-\beta} \lim_{\xi\rightarrow 0+} \xi^{\beta-1}=\infty$,
which satisfies the condition (3) in Proposition \ref{prop-GC}.
Observe that for every $0<\xi <\delta$,
\begin{equation}\label{omeg''}
  \omega'(\xi)= \kappa \beta m(\delta^{-1}) \delta^{1-\beta} \xi^{\beta-1}>0,\quad\textrm{and}\quad \omega''(\xi) = -\kappa \beta (1-\beta) m(\delta^{-1}) \delta^{1-\beta} \xi^{\beta-2}<0,
\end{equation}
and for every $\xi >\delta$ (from \eqref{mDec}),
\begin{equation}\label{omeg-fact1}
  \omega'(\xi) = \gamma m(\xi^{-1})>0,\quad \textrm{and}\quad \omega''(\xi) = -\gamma \frac{m'(\xi^{-1})}{\xi^2}\leq -\gamma (\alpha-\sigma) \frac{m(\xi^{-1})}{\xi}<0,
\end{equation}
and for $\xi =\delta$,
\begin{equation*}
  \omega'(\delta-)= \kappa \beta m(\delta^{-1}),\quad \textrm{and} \quad \omega'(\delta+)=\gamma m(\delta^{-1}),
\end{equation*}
thus if $\gamma< \kappa\beta$, we infer that $\omega$ is nondecreasing and concave for all $\xi>0$. We also find that
\begin{equation}\label{efact1}
  \textrm{the mapping $\xi\mapsto \frac{\omega(\xi)}{\xi^\beta}$ for every $\xi >0$ is non-increasing.}
\end{equation}
Indeed, if $\xi\in ]0,\delta]$, \eqref{efact1} is a direct consequence of \eqref{MOC2}; while if $\xi \in ]\delta,\infty[$,
we have $\left( \frac{\omega(\xi)}{\xi^\beta}\right)'=\frac{\xi \omega'(\xi) -\beta \omega(\xi)}{\xi^{\beta+1}} $, and noticing that by \eqref{omeg-fact1}, $\beta>1-\alpha+\sigma$ and $\gamma<\beta\kappa$,
\begin{equation*}
\begin{split}
  \left( \xi \omega'(\xi)-\beta\omega(\xi)\right)' =\omega'(\xi) + \xi \omega''(\xi)-\beta\omega'(\xi)
  < \left(1-\beta- (\alpha-\sigma)\right)\gamma m(\xi^{-1}) m(\xi^{-1})<0,
\end{split}
\end{equation*}
and
\begin{equation*}
  \delta\omega'(\delta+)-\beta\omega(\delta)=\gamma m(\delta^{-1})\delta -\beta \kappa m(\delta^{-1})\delta <0,
\end{equation*}
we deduce that $\frac{d}{d\xi}(\frac{\omega(\xi)}{\xi^\beta})<0$, which implies \eqref{efact1} in the range $\xi\in ]\delta,\infty[$.

Now we give the following key lemma concerned with the uniform-in-$\epsilon$ preservation of the H\"older regularity by using the modulus of continuity \eqref{MOC2}.
\begin{lemma}\label{lem-HoldRC}
There exist two constants $\gamma$ and $\kappa$ (cf. \eqref{kap-gam-cd1}) independent of $\delta,\epsilon$ such that
\begin{equation}\label{eq:HoldRC}
  \textrm{if for some time $T_0\geq0$, $\theta^\epsilon(T_0)$ uniformly-in-$\epsilon$ strictly obeys MOC $\omega(\xi)$},
\end{equation}
then for any $t>T_0$, $\theta^\epsilon(t)$ also strictly obeys this MOC $\omega(\xi)$, which further implies the $\beta$-H\"older regularity of $\theta^\epsilon(t)$ for $t>T_0$.
\end{lemma}

The proof of Lemma \ref{lem-HoldRC} is placed in Subsection \ref{subsec:lem}. We only note that under the uniform-in-$\epsilon$ preservation of MOC $\omega(\xi)$ by $\theta^\epsilon(t)$ for all $t\in [T_0,\infty[$,
we deduce from \eqref{efact1} that
\begin{equation}\label{Cbetaest}
  \sup_{t\in [T_0,\infty[}\|\theta^\epsilon(t)\|_{\dot C^\beta}= \sup_{t\in [T_0,\infty[} \sup_{x\neq y\in\R^d}\frac{|\theta^\epsilon(x,t)-\theta^\epsilon(y,t)|}{|x-y|^\beta}
  \leq \sup_{x\neq y\in\R^d}\frac{\omega(|x-y|)}{|x-y|^\beta} \leq \kappa m(\delta^{-1})\delta^{1-\beta}.
\end{equation}

Next our goal is to justify the condition \eqref{eq:HoldRC} at some time $T_0>0$.
We consider the following family of moduli of continuity that for $\xi_0 >\delta$,
\begin{equation}\label{MOC3.1}
  \omega(\xi,\xi_0)=
  \begin{cases}
    (1-\beta)\kappa m(\delta^{-1})\delta + \gamma\int_\delta^{\xi_0}m(\eta^{-1})\dd \eta -\gamma m(\xi_0^{-1})(\xi_0-\delta)+ \beta\kappa m(\delta^{-1})\xi, & \; \textrm{for}\; 0 \leq \xi \leq \delta, \\
    \kappa m(\delta^{-1})\delta + \gamma\int_\delta^{\xi_0}m(\eta^{-1})\dd \eta -\gamma m(\xi_0^{-1})\xi_0 + \gamma m(\xi_0^{-1})\xi,&\; \textrm{for}\;\delta<\xi\leq \xi_0, \\
    \kappa m(\delta^{-1})\delta + \gamma \int_\delta^\xi m(\eta^{-1}) \dd \eta, &\; \textrm{for}\; \xi>\xi_0,
  \end{cases}
\end{equation}
and for $\xi_0 \leq \delta$,
\begin{equation}\label{MOC3.2}
  \omega(\xi,\xi_0)=
  \begin{cases}
    (1-\beta)\kappa m(\delta^{-1})\delta^{1-\beta}\xi_0^\beta + \beta \kappa m(\delta^{-1})\delta^{1-\beta}\xi_0^{\beta-1}\xi,
    &\quad \textrm{for}\; 0\leq \xi< \xi_0, \\
    \kappa m(\delta^{-1}) \delta^{1-\beta} \xi^\beta, &\quad \textrm{for}\; \xi_0\leq \xi\leq \delta, \\
    \kappa m(\delta^{-1}) \delta + \gamma \int_\delta^\xi m(\eta^{-1})\,\dd \eta, & \quad \textrm{for}\; \xi>\delta,
  \end{cases}
\end{equation}
where $\beta\in ]1-\alpha+\sigma,1[$, and $\kappa,\gamma,\delta$ are positive constants chosen later. 
Note that for $\xi_0=0+$, $\omega(\xi,0+)$ just reduces to the MOC $\omega(\xi)$ given by \eqref{MOC2}.
Motivated by \cite{Kis}, the basic idea of constructing $\omega(\xi,\xi_0)$ is through taking a tangent line at $\xi=\xi_0$ to $\omega(\xi)$ given by \eqref{MOC2}
and replacing $\omega(\xi)$ with this tangent line at the range $0\leq \xi \leq\xi_0$.
But since the one-sided derivatives of $\omega(\xi)$ at the point $\xi=\delta$ do not coincide, thus in order to control $\partial_{\xi_0}\omega(\xi,\xi_0)$ at the point $\xi_0=\delta$,
we make a modification in the case $\xi_0 >\delta$,
that is, the tangent line mentioned above at the range $\delta\leq\xi\leq \xi_0$ is still adopted,
but at the range $0\leq \xi\leq \delta$ it is replaced by a straight line crossing $\omega(\delta+,\xi_0)$ with the larger slope $\omega'(\delta-)=\beta \kappa m(\delta^{-1})$.

Clearly, for all $\xi_0>0$, $\omega(0+,\xi_0)>0$, which guarantees the condition (3) in Proposition \ref{prop-GC}.
Similarly as $\omega(\xi)$ defined by \eqref{MOC2}, $\omega(\xi,\xi_0)$ is also a increasing and concave function for all $\xi>0$ and $\xi_0>0$.
For $\xi_0=A_0>\delta$, by virtue of \eqref{mProp2}, we get
\begin{align}\label{omeg0+}
  \omega(0+,A_0) & =(1-\beta)\kappa m(\delta^{-1})\delta + \gamma\int_\delta^{A_0}m(\eta^{-1})\dd \eta -\gamma m(A_0^{-1})(A_0-\delta) \nonumber \\
  & \geq (1-\beta)\kappa m(\delta^{-1})\delta + \gamma\, m(A_0^{-1})\, A_0^{\alpha-\sigma}\int_\delta^{A_0} \eta^{-(\alpha-\sigma)}\dd \eta
  -\gamma m(A_0^{-1})A_0 \nonumber \\
  & \geq (1-\beta)\kappa m(\delta^{-1})\delta + \frac{\gamma}{1-\alpha+\sigma}\, m(A_0^{-1})\, A_0^{\alpha-\sigma} \left( A_0^{1-\alpha+\sigma}-\delta^{1-\alpha-\sigma}\right)
  -\gamma m(A_0^{-1})A_0 \nonumber \\
  & \geq \big( (1-\beta)\kappa -\gamma\big) m(\delta^{-1})\delta + \frac{(\alpha-\sigma)\gamma}{1-\alpha+\sigma} m(A_0^{-1})A_0^{\alpha-\sigma}\left( A_0^{1-\alpha+\sigma}- \delta^{1-\alpha+\sigma}\right).
\end{align}
In view of $\gamma<(1-\beta)\kappa$, we have that the initial data $\theta_0^\epsilon$ uniformly-in-$\epsilon$ strictly obeys the MOC
$\omega(\xi,A_0)$ provided that
\begin{equation}\label{A0-del-cd}
  \frac{(\alpha-\sigma)\,\gamma}{1-\alpha+\sigma} m(A_0^{-1})A_0^{\alpha-\sigma} \left( A_0^{1-\alpha+\sigma} -\delta^{1-\alpha+\sigma} \right) \geq 2\|\theta_0\|_{L^\infty}.
\end{equation}

We next state the following crucial lemma.
\begin{lemma}\label{lem-mocev}
  Suppose that \textrm{Case (I)} is considered, and the initial data $\theta_0^\epsilon$ uniformly-in-$\epsilon$ strictly obeys the MOC $\omega(\xi,A_0)$ given by \eqref{MOC3.1}.
For $\rho >0$, let $\xi_0=\xi_0(t)$ be a function satisfying
\begin{equation}\label{xi0}
  \frac{d}{dt}\xi_0= - \rho \, m(\xi_0^{-1}) \xi_0, \quad \xi_0(0)= A_0.
\end{equation}
Then for some positive constants $\delta$, $\kappa$, $\gamma$, $\rho$ small enough, the solution $\theta^\epsilon(x,t)$ of the regularized drift-diffusion equation \eqref{reDDeq2}
strictly obeys the MOC $\omega(\xi,\xi_0(t))$ for all $t$ such that $\xi_0(t)> 0$.
\end{lemma}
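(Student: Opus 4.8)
The plan is to apply Proposition~\ref{prop-GC} to the regularized equation~\EQ{reDDeq} (with $\epsilon>0$) using the time-dependent modulus $\omega(\xi,\xi_0(t))$, where $\xi_0(t)$ solves~\EQ{xi0}, verifying the hypotheses and the key inequality~\EQ{keyGC} at each time $T$ with $\xi_0(T)>0$. The structural requirements are immediate: $\omega(\cdot,\xi_0)$ is increasing, concave and piecewise $C^2$ (the only genuine kink is at $\xi=\delta$, while at $\xi=\xi_0$ the ``tangent-line'' construction makes $\omega$ of class $C^1$); $\omega(0+,\xi_0)>0$ (see~\EQ{omeg0+}); and, for each fixed $\xi$, $t\mapsto\omega(\xi,\xi_0(t))$ is $C^1$ because $\omega(\xi,\xi_0)$ depends on $\xi_0$ in a $C^1$ manner (the pieces of~\EQ{MOC3.1}--\EQ{MOC3.2} matching to first order at $\xi_0=\delta$ and at $\xi_0=\xi$) and $\xi_0(t)$ is $C^1$ for $\xi_0>0$. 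Moreover, for $\xi>A_0\ge\xi_0(t)$ one has $\omega(\xi,\xi_0(t))=\kappa m(\delta^{-1})\delta+\gamma\int_\delta^\xi m(\eta^{-1})\,\dd\eta$, independent of $t$, which gives the uniform continuity near infinity and $\omega^{-1}(3\|\theta(t)\|_{L^\infty},t)<\infty$. Since $K\ge0$ in \textrm{Case (III)}, the maximum principle for~\EQ{reDDeq} gives $\|\theta(t)\|_{L^\infty}\le\|\theta_0\|_{L^\infty}=:B_0$; together with the strict form of~\EQ{A0-del-cd} (which we may assume after enlarging $A_0$ slightly) this forces $\omega(0+,A_0)>2B_0$, so $\theta_0$ obeys $\omega(\cdot,A_0)$.

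It then remains to verify~\EQ{keyGC}. The term $2\epsilon\partial_{\xi\xi}\omega\le0$ is dropped by concavity. Using the drift bound~\EQ{Ome-es1} of Lemma~\ref{lem-mocdrf}(1) and the fact that in each $\xi$-range the ratio $C_2\,\partial_\xi\omega(\xi,\xi_0)/m(\xi^{-1})$ is $\le\tfrac12$ once $\kappa,\gamma$ are small (checked exactly as in the proof of Theorem~\ref{thm-glob}, via~\EQ{mProp1}--\EQ{mProp2}), we absorb half of $D_{x,e}$ into $\Omega_{x,e}\partial_\xi\omega$ and reduce~\EQ{keyGC} to (writing $\omega(\xi)=\omega(\xi,\xi_0(t))$ for brevity)
\begin{equation*}
  \partial_t\omega(\xi,\xi_0(t))>\tfrac12 D_{x,e}(\xi,t)+C_2\,\omega(\xi)\,\partial_\xi\omega(\xi)+C_2\,\partial_\xi\omega(\xi)\,\xi\int_\xi^\infty\frac{\omega(\eta)}{\eta^2}\,\dd\eta ,
\end{equation*}
with $D_{x,e}$ bounded through~\EQ{D2}. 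We then distinguish the position of $\xi$: (a) $\xi>\xi_0$ (when $\xi_0>\delta$) or $\xi>\delta$ (when $\xi_0\le\delta$), where $\omega(\cdot,\xi_0)$ coincides with the stationary modulus~\EQ{MOC2} (with $c_{\alpha,\sigma}=\infty$) and is $t$-independent, so $\partial_t\omega=0$ and the inequality is precisely the stationary estimate of Case~2 in the proof of Theorem~\ref{thm-glob} --- now with $C_1'=C_2'=0$ and valid for all such $\xi$, since in \textrm{Case (III)} no upper cut-off is needed, the bounds $\int_{\xi/2}^\infty m(\eta^{-1})\eta^{-1}\,\dd\eta\le\tfrac{2^\alpha}{\alpha-\sigma}m(\xi^{-1})$ and $\omega(2\xi)\le\max\{2^{1-\alpha+\sigma},3/2\}\,\omega(\xi)$ holding globally by~\EQ{mProp1}--\EQ{mProp2}; (b) $\xi_0<\xi\le\delta$ when $\xi_0\le\delta$, where $\omega(\cdot,\xi_0)=\kappa m(\delta^{-1})\delta^{1-\beta}\xi^\beta$ is again $t$-independent and the inequality reduces to Case~1 of that proof (with $C_1'=0$); and (c) the linear ``tangent-line'' pieces $0<\xi\le\xi_0$ and, when $\xi_0>\delta$, also $\delta<\xi\le\xi_0$.

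Case~(c) is the heart of the matter. There one computes $\partial_t\omega=\dot\xi_0\,\partial_{\xi_0}\omega$, with (taking one-sided derivatives where needed) $\partial_{\xi_0}\omega=\gamma\,\xi_0^{-2}m'(\xi_0^{-1})(\xi_0-\max\{\xi,\delta\})\ge0$ for $\xi_0>\delta$ and $\partial_{\xi_0}\omega=\beta(1-\beta)\kappa m(\delta^{-1})\delta^{1-\beta}\xi_0^{\beta-2}(\xi_0-\xi)\ge0$ for $\xi_0\le\delta$; combining this with~\EQ{mDec2} and~\EQ{xi0} gives $|\partial_t\omega|\le\rho\,C(\alpha,\beta,\kappa,\gamma,\delta)\,m(\xi_0^{-1})^2(\xi_0-\xi)$, a quantity of size $O(\rho)$ once $\kappa,\gamma,\delta$ are fixed (using that $\xi_0\mapsto\xi_0\,m(\xi_0^{-1})$ stays bounded on $]0,A_0]$). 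On the linear pieces $\partial_{\xi\xi}\omega=0$, so the dissipative gain in~\EQ{D2} comes only from those $\eta$ for which $\xi\pm2\eta$ or $2\eta\pm\xi$ leave the current linear piece: the $\int_0^{\xi/2}$ integral is $\le0$ by concavity of $\omega$, while on $\int_{\xi/2}^\infty$ one has $\omega(2\eta+\xi)-\omega(2\eta-\xi)-2\omega(\xi)\le-2\,\omega(0+,\xi_0)<0$ for all $\eta\ge\xi/2$ (for $\xi_0>\delta$ the straddling of the kink at $\delta$ is used in the same spirit), which together with the bound on $\int_{\xi/2}^\infty m(\eta^{-1})\eta^{-1}\,\dd\eta$ yields $\tfrac12 D_{x,e}\le-c(\alpha,\sigma,\beta)\,\kappa\,m(\delta^{-1})\,m(\xi_0^{-1})\,\delta^{1-\beta}\,\xi_0^\beta$, the two remaining drift terms on the right being absorbed after shrinking $\kappa$ once more (they are controlled by $\xi\int_\xi^\infty\omega(\eta)\eta^{-2}\,\dd\eta\lesssim(1-\beta)^{-2}\,\omega(0+,\xi_0)$, where the linear part of that integral is handled via $\sup_{s>0}s\log(\xi_0/s)=\xi_0/e$). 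Finally, choosing $\rho$ small relative to $c$ (a bound of the form $\rho<c(\alpha,\sigma,\beta)/(\beta\cdot\mathrm{const})$, with no $(1-\alpha)^{-1}$ factor) makes $|\partial_t\omega|$ strictly smaller than $|\tfrac12 D_{x,e}|$, establishing the reduced inequality and hence~\EQ{keyGC}; applying Proposition~\ref{prop-GC} at each $T$ with $\xi_0(T)>0$ then yields the conclusion. I expect the main obstacle to be exactly this bookkeeping in case~(c): extracting a clean, $\rho$-independent lower bound for $-D_{x,e}$ on the two linear pieces, handling correctly the non-smoothness of $\omega$ at $\xi=\delta$ and the behaviour of $\partial_{\xi_0}\omega$ as $\xi_0$ crosses $\delta$, and matching all the small correction terms --- this is where the particular ``broken tangent-line'' shape of~\EQ{MOC3.1}--\EQ{MOC3.2} does its work.
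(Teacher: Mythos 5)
Your overall strategy is the paper's: verify the criterion of Proposition~\ref{prop-GC} in the form \eqref{Targ3}, absorb the singular part of the drift into the dissipation via \eqref{Ome-es1}, exploit $\omega(0+,\xi_0)>0$ together with subadditivity of $\omega(\cdot,\xi_0)-\omega(0+,\xi_0)$ to extract $D_{x,e}\le -2C_1\omega(0+,\xi_0)\int_{\xi/2}^\infty m(\eta^{-1})\eta^{-1}\,\dd\eta$ on the linear pieces, and beat the term $-\partial_{\xi_0}\omega\,\dot\xi_0$ by taking $\rho$ small. However, your case~(b) has a genuine hole. For $\xi_0\le\delta$ and $\xi_0<\xi\le\delta$ you assert that the inequality ``reduces to Case~1 of the proof of Theorem~\ref{thm-glob}''. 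That stationary argument rests on \eqref{Dxi-es1}, i.e.\ on $\omega(\xi+2\eta,\xi_0)+\omega(\xi-2\eta,\xi_0)-2\omega(\xi,\xi_0)\le\omega''(\xi)\eta^2$ integrated over $\eta\in\,]0,\xi/2]$. For the modified modulus this fails exactly where $\xi-2\eta<\xi_0$: there $\omega(\cdot,\xi_0)$ is the tangent line, which lies \emph{above} the power law, so the second difference is less negative, and when $\xi$ is comparable to $\xi_0$ (say $\xi\le 4\xi_0$) the local dissipation integral degenerates to something of order $(\xi-\xi_0)^{2-\alpha+\sigma}$ while the drift term $C_2\,\omega\,\partial_\xi\omega\sim\beta\kappa^2(m(\delta^{-1}))^2\delta^{2(1-\beta)}\xi^{2\beta-1}$ does not shrink. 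The paper's Case~5 therefore splits at $\xi_0\lessgtr\xi/4$: for $\xi_0\le\xi/4$ one restricts the integral to $\eta$ with $\xi-2\eta\ge\xi_0$ and recovers the stationary bound \eqref{Dxe-est4}, whereas for $\xi_0\ge\xi/4$ one must invoke precisely your case-(c) mechanism, $D_{x,e}\le-\tfrac{2C_1}{\alpha}\omega(0+,\xi_0)m(\xi^{-1})$ combined with $\omega(0+,\xi_0)\ge(1-\beta)\kappa m(\delta^{-1})\delta^{1-\beta}\xi_0^\beta$ and $\xi_0^\beta\ge 4^{-\beta}\xi^\beta$, to reproduce a bound of stationary strength (see \eqref{Dxe-est5}). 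You have the right tool but do not apply it where it is needed.

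Two smaller inaccuracies. First, $\omega(\xi,\xi_0)$ is \emph{not} $C^1$ in $\xi_0$ across $\xi_0=\delta$: from \eqref{MOC3.1} one gets $\partial_{\xi_0}\omega\to 0$ as $\xi_0\to\delta^+$, while from \eqref{MOC3.2} one gets $\partial_{\xi_0}\omega\to\beta(1-\beta)\kappa\, m(\delta^{-1})\delta^{-1}(\delta-\xi)>0$ as $\xi_0\to\delta^-$; the broken tangent-line construction is designed to keep this jump controlled, not to remove it, and the criterion must be checked with the larger one-sided value of $\partial_{\xi_0}\omega$ (equivalently the smaller $\partial_t\omega$). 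This is harmless for the argument but your justification of hypothesis~(2) of Proposition~\ref{prop-GC} by first-order matching is wrong as stated. Second, in case~(c) the dissipation lower bound must carry the factor $m(\xi^{-1})$ (from $\int_{\xi/2}^\infty m(\eta^{-1})\eta^{-1}\,\dd\eta\ge\alpha^{-1}m(\xi^{-1})$), not $m(\xi_0^{-1})$ as you wrote: when $\xi\le\delta$ and $\xi_0\gg\delta$ the drift piece $C_2\,\omega\,\partial_\xi\omega$ is of size $\beta\kappa\, m(\delta^{-1})\,\omega(0+,\xi_0)$ and can only be absorbed because $m(\xi^{-1})\ge m(\delta^{-1})\gg m(\xi_0^{-1})$, cf.\ \eqref{diff-cont}. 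Relatedly, the admissible smallness conditions carry factors of $\alpha-\sigma$ (cf.\ \eqref{rkg-cdsum}), which your $(1-\beta)^{-2}$ bookkeeping misses; this does not affect the qualitative statement of the lemma but matters for the quantitative bounds used later.
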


Now with Lemma \ref{lem-mocev} at our disposal, whose proof is postponed in Subsection \ref{subsec:lem},
we can conclude Proposition \ref{prop-evRe} as follows. Thanks to \eqref{xi0}, and by integrating on the time variable over $[0,t]$,
we get
\begin{equation*}
\begin{split}
  \rho t = \int^{A_0}_{\xi_0(t)} \frac{1}{m(\xi_0^{-1})\xi_0}\dd\xi_0 & \leq \frac{1}{A_0^{\alpha-\sigma} m(A_0^{-1})} \int_{\xi_0(t)}^{A_0} \frac{1}{\xi_0^{1-\alpha+\sigma}}\dd\xi_0 \\
  & = \frac{1}{A_0^{\alpha-\sigma} m(A_0^{-1})}  \frac{1}{\alpha-\sigma}\left( A_0^{\alpha-\sigma}- \xi_0(t)^{\alpha-\sigma}\right),
\end{split}
\end{equation*}
which yields that
\begin{equation}\label{}
  \xi_0(t)\leq A_0 \left( 1 - m(A_0^{-1})(\alpha-\sigma)\rho\, t \right)^{\frac{1}{\alpha-\sigma}}.
\end{equation}
Thus there exists a time $t_1$ satisfying
\begin{equation}\label{t1-est}
  t_1\leq \frac{1}{(\alpha-\sigma) \rho\, m(A_0^{-1})},
\end{equation}
so that $\xi_0(t_1)\equiv 0$ and also
$\theta^\epsilon(x,t_1)$ obeys the MOC $\omega(\xi,0+) = \omega(\xi)$ with $\omega(\xi)$ given by \eqref{MOC2}.
Moreover, we claim that
\begin{equation}\label{eq:claim}
  \textrm{$\theta^\epsilon(x,t_1)$ uniformly-in-$\epsilon$ strictly obeys the MOC $\omega(\xi)$ with $\omega(\xi)$ given by \eqref{MOC2}}.
\end{equation}
Indeed, the proof is in the spirit of that of Proposition \ref{prop-GC}; denoting by $F^\epsilon(x,y,t_1)= \frac{|\theta^\epsilon(x,t_1)-\theta^\epsilon(y,t_1)|}{\omega(|x-y|)}$
for every $x\neq y\in \R^d$, and according to \cite[Proposition 3.1]{MX-gQG}, we find that there exist positive constants $C,c$ depending on $t_1$ so that $F^\epsilon(x,y,t_1)<1$
for every $(x,y)\notin \mathcal{K}$, with $\mathcal{K}:=\{(x,y)\in \R^d\times\R^d: |x|,|y|\leq C, |x-y|\geq c\}$, while for the continuous function $F^\epsilon(x,y,t_1)$
on the compact set $\mathcal{K}$, the maximum can be achieved by some pair $(x,y)$ on $\mathcal{K}$: if the maximum is just strictly less than $1$, the claim \eqref{eq:claim} is followed;
otherwise, there do exist some points $(x,y)\in \mathcal{K}$ so that $F^\epsilon(x,y,t_1)=1$, and by writing $y= x+\xi e$ with $\xi>0$ and $e\in \mathbb{S}^{d-1}$,
it yields that the scenario \eqref{scena} holds with $t=t_1$ and $\omega(\xi,t_1)=\omega(\xi)$;
but then from $\xi_0(t_1)=0$, \eqref{omegxi0-3} below and \eqref{GC-Om}-\eqref{GC-D},
\begin{equation}\label{eq:t1-targ}
  \frac{\dd }{\dd t}\left(\frac{\theta^\epsilon(x,t)-\theta^\epsilon(x+\xi e,t)}{\omega(\xi,\xi_0(t))}\right)\bigg|_{t=t_1}= \frac{\Omega(\xi,t_1) \omega'(\xi)+ D(\xi,t_1)+2\epsilon \omega''(\xi)}{\omega(\xi) },
\end{equation}
and by arguing as \eqref{Targ1} below we can show that the right-hand side of \eqref{eq:t1-targ} is strictly less than 0 (for $\kappa,\gamma$ in $\omega(\xi)$ satisfying \eqref{kap-gam-cd1}),
which clearly leads to a contradiction; hence we justify the assertion \eqref{eq:claim}.

Then the condition \eqref{eq:HoldRC} with $T_0=t_1$ is satisfied,
and Lemma \ref{lem-HoldRC} ensures that such a MOC $\omega(\xi)$ given by \eqref{MOC2} is strictly preserved by the solution
$\theta^\epsilon(x,t)$ for all $t\geq t_1$, which leads to
\begin{equation}\label{the-Ces}
  \sup_{t\in [t_1,\infty[}\|\theta^\epsilon(t)\|_{\dot C^\beta(\R^d)}\leq \kappa m(\delta^{-1})\delta^{1-\beta},
\end{equation}
with some fixed $\delta>0$ satisfying \eqref{A0-del-cd}, and thus we finish the proof of \eqref{Cbetaest0}.

In particular, if $\alpha\in ]0,1[$ and $\sigma=0$ in the condition \eqref{mDec2}, then \eqref{A0-del-cd}, \eqref{t1-est} and \eqref{the-Ces} reduce to
\begin{equation*}
\begin{cases}
  \frac{\alpha\,\gamma}{1-\alpha} \left( A_0^{1-\alpha} -\delta^{1-\alpha} \right) \geq 2\|\theta_0\|_{L^\infty},\quad  \\
   t_1\leq A_0^\alpha/(\alpha \rho),\quad \\
  \sup_{t\in [t_1,\infty[}\|\theta^\epsilon(t)\|_{\dot C^\beta(\R^d)}\leq \kappa \delta^{1-\alpha-\beta},
\end{cases}
\end{equation*}
where $\kappa,\gamma,\rho$ are fixed positive constants satisfying \eqref{rkg-cdsum} below, that is, we can choose
\begin{equation}\label{rkg-main}
  \rho= \frac{1-\beta}{C \alpha}, \quad \kappa=\frac{1}{C}(1-\beta)^2,\quad
  \gamma= \frac{1}{C}\min\set{(1-\beta)^3\alpha, \beta(1-\beta)^2},
\end{equation}
with some $C=C(d)>0$. By choosing
\begin{equation*}
  A_0= \left( \frac{4(1-\alpha)}{\alpha\gamma}\|\theta_0\|_{L^\infty}\right)^{\frac{1}{1-\alpha}},\quad
  \delta = \left( \frac{(1-\alpha)}{\alpha\gamma}\|\theta_0\|_{L^\infty}\right)^{\frac{1}{1-\alpha}},
\end{equation*}
we see that
\begin{equation}\label{t1-bdd2}
  t_1\leq \frac{C}{1-\beta} \left( \frac{4(1-\alpha)}{\alpha\gamma}\right)^{\frac{\alpha}{1-\alpha}} \|\theta_0\|_{L^\infty}^{\frac{\alpha}{1-\alpha}},
\end{equation}
and for every $\beta\in ]1-\alpha,1[$, we have
\begin{equation}\label{Cbet-bdd}
  \sup_{t\in [t_1,\infty[}\|\theta^\epsilon(t)\|_{\dot C^\beta(\R^d)}\leq \frac{(1-\beta)^2}{C}
  \left( \frac{1-\alpha}{\alpha\gamma}\right)^{-\frac{\beta-1+\alpha}{1-\alpha}} \|\theta_0\|_{L^\infty}^{-\frac{\beta-1+\alpha}{1-\alpha}},
\end{equation}
where $C>0$ is some constant depending only on $d$, and thus we conclude Proposition \ref{prop-evRe}.

\subsection{Proof of Lemma \ref{lem-HoldRC} and Lemma \ref{lem-mocev}}\label{subsec:lem}

\begin{proof}[Proof of Lemma \ref{lem-HoldRC}]

According to Proposition \ref{prop-GC}, it suffices to show that for all $t>T_0$ and $\xi>0$,
\begin{equation}\label{Targ1}
  \Omega(\xi,t)\omega'(\xi) + D(\xi,t) + \epsilon \omega''(\xi) < 0,
\end{equation}
where $\Omega(\xi,t)$, $D(\xi,t)$ are respectively defined by \eqref{GC-Om} and \eqref{GC-D}
under the scenario \eqref{scena} with $\omega(\cdot)$ in place of $\omega(\cdot,t)$.
By using Lemma \ref{lem-mocdiss} and Lemma \ref{lem-mocdrf}, we get
\begin{equation}\label{Dxit}
\begin{split}
  D(\xi,t) \leq & \, C_1 \int_0^{\frac{\xi}{2}}\left(\omega(\xi+2\eta)+\omega (\xi-2\eta) -2\omega (\xi)\right) \frac{m(\eta^{-1})}{\eta}\textrm{d} \eta \\
  & + C_1 \int_{\frac{\xi}{2}}^\infty \big(\omega (2\eta+\xi)-\omega (2\eta-\xi) -2\omega (\xi)\big) \frac{m(\eta^{-1})}{\eta}\textrm{d}\eta,
\end{split}
\end{equation}
and
\begin{equation}\label{Omega2}
  \Omega(\xi,t) \leq -\frac{C_2}{m(\xi^{-1})}  D(\xi,t) + C_2  \omega(\xi) + C_2 \xi \int_\xi^\infty \frac{\omega(\eta)}{\eta^2} \dd \eta,
\end{equation}
where $C_1=C_1(d), C_2=C_2(d)>0$.

In order to prove \eqref{Targ1}, we divide into two cases.

\textbf{Case 1: $0<\xi \leq \delta$.}

In this case, we have $\omega(\xi)= \kappa m(\delta^{-1}) \delta^{1-\beta} \xi^\beta$, and $\omega'(\xi)= \kappa \beta  m(\delta^{-1}) \delta^{1-\beta} \xi^{\beta-1}$,
and from \eqref{efact1} we see that
\begin{align*}
  \int_\xi^\infty \frac{\omega(\eta)}{\eta^2} \,\dd \eta  & = \int_\xi^\delta \frac{\omega(\eta)}{\eta^2} \dd\eta + \int_\delta^\infty \frac{\omega(\eta)}{\eta^2} \dd \eta = \kappa m(\delta^{-1}) \delta^{1-\beta} \int_\xi^\delta \eta^{\beta-2}\dd \eta +   \int_\delta^\infty \frac{\omega(\eta)}{\eta^\beta} \frac{1}{\eta^{2-\beta}} \,\dd\eta \\
  & \leq \kappa m(\delta^{-1}) \delta^{1-\beta} \frac{1}{1-\beta} (\xi^{\beta-1}-\delta^{\beta-1}) + \kappa m(\delta^{-1})\delta^{1-\beta} \frac{1}{1-\beta}\delta^{\beta-1} \\
  & \leq\frac{\kappa}{1-\beta} m(\delta^{-1}) \delta^{1-\beta} \xi^{\beta-1}.
\end{align*}
Thus
\begin{equation*}
  \Omega(\xi,t)\omega'(\xi)\leq - \frac{C_2}{m(\xi^{-1})}\omega'(\xi) D(\xi,t) + \frac{2 C_2}{1-\beta} \left( \kappa m(\delta^{-1})\delta^{1-\beta} \right)^2 \beta \xi^{2\beta-1}.
\end{equation*}
Observing that for every $\beta>1-\alpha+\sigma$ and $\xi\in ]0,\delta]$,
\begin{equation*}
\begin{split}
  \frac{C_2}{m(\xi^{-1})}\omega'(\xi)   =C_2\beta\kappa \frac{ m(\delta^{-1})\delta^{1-\beta} \xi^{\beta-1+\alpha-\sigma}}{\xi^{\alpha-\sigma} m(\xi^{-1})}
   \leq C_2\beta\kappa \frac{ m(\delta^{-1})\delta^{1-\beta} \delta^{\beta-1+\alpha-\sigma}}{\delta^{\alpha-\sigma} m(\delta^{-1})} = C_2 \beta \kappa,
\end{split}
\end{equation*}
we further get that by letting $\kappa < 1/(2 C_2\beta)$,
\begin{equation}\label{omeg-est000}
  \Omega(\xi,t)\omega'(\xi)\leq - \frac{1}{2} D(\xi,t) + \frac{2C_2}{1-\beta} \left( \kappa m(\delta^{-1})\delta^{1-\beta} \right)^2 \beta \xi^{2\beta-1}.
\end{equation}

For the contribution from the diffusion term, by virtue of the following estimate
\begin{equation}\label{omega-fact}
\begin{split}
  \omega(\xi+2\eta)+\omega(\xi-2\eta)-2\omega(\xi) & = 4\eta^2 \int_0^1 \int_{-1}^1 \lambda \omega''(\xi + 2\lambda \tau\,\eta )\,\dd \tau \dd\lambda \\
  & \leq 4\eta^2 \int_0^1 \int_{-1}^0 \lambda \omega''(\xi)\,\dd\tau \dd\lambda  \leq \omega''(\xi) \eta^2,
\end{split}
\end{equation}
and \eqref{omeg''}, \eqref{mProp2}, we directly get
\begin{equation}\label{Dxi-es1}
\begin{split}
  D(\xi,t) & \leq C_1 \int_0^{\frac{\xi}{2}}\left(\omega(\xi+2\eta)+\omega (\xi-2\eta) -2\omega (\xi)\right) \frac{m(\eta^{-1})}{\eta}\textrm{d} \eta \\
  & \leq  C_1 \omega''(\xi) \int_0^{\frac{\xi}{2}} \eta m(\eta^{-1})\dd\eta \\
  & \leq  - C_1\beta(1-\beta)\kappa m(\delta^{-1}) \delta^{1-\beta} \xi^{\beta-2}  \int_0^{\frac{\xi}{2}} \left( \eta^{\alpha-\sigma} m(\eta^{-1}) \right) \eta^{1-\alpha+\sigma} \dd\eta \\
  & \leq  - \frac{C_1}{8}  \beta (1-\beta)\kappa\left( m(\delta^{-1})\right)^2 \delta^{1-\beta+\alpha-\sigma} \xi^{\beta-\alpha +\sigma}.
\end{split}
\end{equation}
Hence we infer that
\begin{equation}\label{eqkey1}
\begin{split}
  & \Omega(\xi,t)\omega'(\xi) + D(\xi,t) \leq \\
  \leq\, & \frac{2 C_2 }{1-\beta}\beta \left( \kappa m(\delta^{-1})\delta^{1-\beta} \right)^2 \xi^{2\beta-1} + \frac{1}{2} D(\xi,t) \\
  \leq\, & \beta\kappa \left( m(\delta^{-1})\right)^2 \delta^{1-\beta+\alpha-\sigma} \xi^{\beta-\alpha+\sigma} \left( \frac{2 C_2 }{1-\beta} \kappa
  \left( \frac{\xi}{\delta}\right)^{\beta- 1+\alpha-\sigma} - \frac{C_1(1-\beta)}{16}\right) \\
  \leq\, & \beta\kappa \left( m(\delta^{-1})\right)^2 \delta^{1-\beta+\alpha-\sigma} \xi^{\beta-\alpha+\sigma} \left( \frac{2 C_2 }{1-\beta} \kappa - \frac{C_1(1-\beta)}{16}\right) <0,
\end{split}
\end{equation}
where the last inequality is from choosing $\kappa$ so that $\kappa < \frac{C_1}{32 C_2 }(1-\beta)^2$.

\textbf{Case 2: $\xi \geq \delta$.}

Taking advantage of \eqref{efact1}, we have
\begin{equation*}
\begin{split}
  \int_\xi^\infty \frac{\omega(\eta)}{\eta^2} \dd \eta & = \int_\xi^\infty \frac{\omega(\eta)}{\eta^\beta} \frac{1}{\eta^{2-\beta}} \dd \eta
  \leq \frac{\omega(\xi)}{\xi^\beta} \int_\xi^\infty \frac{1}{\eta^{2-\beta}}\dd \eta \leq \frac{1}{1-\beta} \frac{\omega(\xi)}{\xi} .
\end{split}
\end{equation*}
Thus from \eqref{Omega2} and $\omega'(\xi)=\gamma m(\xi^{-1})$ in this case, we obtain that by choosing $\gamma < 1/(2C_2)$,
\begin{equation}\label{eq:OmeEst}
  \Omega(\xi,t) \omega'(\xi) = -\gamma C_2 D(\xi,t) + \frac{2 C_2 }{1-\beta} \gamma\, m(\xi^{-1})\omega(\xi)
  \leq -\frac{1}{2} D(\xi,t) + \frac{2 C_2 }{1-\beta}\gamma\, m(\xi^{-1})\omega(\xi).
\end{equation}

For $D(\xi,t)$, noticing that $\omega(2\eta +\xi) - \omega(2\eta-\xi)\leq \omega(2\xi)<2\omega(\xi)$, we get
\begin{align}\label{eq:Dest}
  D(\xi,t) & \leq C_1 \big( \omega(2\xi) -2\omega(\xi)\big)\int_{\frac{\xi}{2}}^\infty \frac{m(\eta^{-1})}{\eta}\dd \eta \nonumber \\
  & \leq  C_1 \left( \omega(2\xi) -2\omega(\xi)\right) \left(\frac{\xi}{2}\right)^\alpha m\left(\frac{2}{\xi}\right)
  \int_{\frac{\xi}{2}}^\infty \frac{1}{\eta^{1+\alpha}} \dd \eta \nonumber \\
  & \leq C_1 \big( \omega(2\xi) -2\omega(\xi)\big) 2^{-\sigma} \xi^\alpha m(\xi^{-1})\frac{1}{\alpha} \left(\frac{ 2}{\xi} \right)^\alpha \nonumber \\
  & \leq  \frac{C_1 }{\alpha} \left( \omega(2\xi) -2\omega(\xi) \right) m(\xi^{-1}).
\end{align}
Next we claim that for $\gamma$ small enough, we have
\begin{equation}\label{claim1}
  \omega(2\xi) \leq \max\set{2^{1-\alpha+\sigma},3/2} \omega(\xi), \quad \forall \xi \geq \delta.
\end{equation}
Indeed, for $\xi=\delta$, we see that $\omega(\delta)= \kappa m(\delta^{-1})\delta$ and
\begin{equation*}
\begin{split}
  \omega(2\delta) & =\omega(\delta)+ \gamma \int_\delta^{2\delta} m(\eta^{-1})\dd\eta
  \leq \omega(\delta)  + \gamma \delta^{\alpha-\sigma} m(\delta^{-1})\int_\delta^{2\delta} \frac{1}{\eta^{\alpha-\sigma}} \dd \eta \\
  & \leq \kappa m(\delta^{-1})\delta + \gamma \delta^{\alpha-\sigma} m(\delta^{-1}) \frac{1}{1-\alpha+\sigma}
  \left( (2\delta)^{1-\alpha+\sigma} -\delta^{1-\alpha+\sigma}\right) \\
  & \leq \kappa m(\delta^{-1}) \delta + \frac{\gamma}{1-\alpha+\sigma} \left( 2^{1-\alpha+\sigma} -1\right) m(\delta^{-1}) \delta, 
\end{split}
\end{equation*}
which further yields that for all $ \gamma< \frac{\kappa}{2}$,
\begin{equation*}
\begin{split}
  \omega(2\delta) & \leq
  \begin{cases}
    \kappa m(\delta^{-1})\delta + 2\gamma (2^{1-\alpha+\sigma}-1) m(\delta^{-1})\delta, & \quad \textrm{if} \;\alpha-\sigma\leq 1/2, \\
    \kappa m(\delta^{-1})\delta + \gamma \left(\sup_{x\in ]0,1/2]}\frac{2^x-1}{x}\right)\, m(\delta^{-1})\delta, & \quad \textrm{if}\; \alpha-\sigma >1/2,
  \end{cases} \\
  & \leq \max \set{2^{1-\alpha+\sigma}, 3/2} \omega(\delta),
\end{split}
\end{equation*}
where we have used $\sup_{x\in ]0,1/2]}\frac{2^x-1}{x}\leq \max\set{\lim_{x\rightarrow0+}\frac{2^x-1}{x},\frac{2^{1/2}-1}{1/2}}\leq 1$.
Whereas for $\xi \in ]\delta,\infty[$, considering an auxiliary function
\begin{equation*}
  h(\xi) : = \omega(2\xi) - \max\{2^{1-\alpha+\sigma},3/2\} \omega(\xi),
\end{equation*}
and noting that
\begin{equation*}
\begin{split}
  h'(\xi) \leq 2 \omega'(2\xi) - 2^{1-\alpha+\sigma} \omega'(\xi)  = 2 m((2\xi)^{-1}) - 2^{1-\alpha+\sigma} m(\xi^{-1})\leq 0,
\end{split}
\end{equation*}
we deduce $h(\xi)\leq h(\delta) \leq 0$ for all $\xi \geq \delta$, which implies \eqref{claim1}. Hence plugging \eqref{claim1} into \eqref{eq:Dest} yields
\begin{equation}\label{eq:Dest2}
\begin{split}
  D(\xi,t) & \leq - \frac{C_1}{\alpha}  \left( 2- \max \set{2^{1-\alpha+\sigma},3/2}\right) m(\xi^{-1})\omega(\xi) \\
  & \leq - \frac{C_1}{2\alpha}  (1-2^{-\alpha+\sigma})m(\xi^{-1}) \omega(\xi)
  \leq - \frac{C_1 \tilde{c}}{4\alpha}  (\alpha-\sigma)  m(\xi^{-1})\omega(\xi),
\end{split}
\end{equation}
with $\tilde{c}:= \inf_{x\in ]0,1]}\set{\frac{2^x-1}{x}}>0$.

Collecting the above estimates yields that for all $\xi \geq \delta$,
\begin{equation}\label{eqkey2}
  \Omega(\xi,t)\omega'(\xi)+ D(\xi,t) \leq \left( \frac{2 C_2 }{1-\beta}\gamma - \frac{C_1 \tilde{c}(\alpha-\sigma)}{4\alpha}
   \right) m(\xi^{-1})\omega(\xi)<0,
\end{equation}
where the last inequality is ensured as long as $\gamma$ is satisfying $\gamma <\frac{C_1 \tilde{c} (1-\beta) (\alpha-\sigma) }{ 8 C_2 \alpha} $.

Therefore, thanks to \eqref{eqkey1} and \eqref{eqkey2}, we prove \eqref{Cbetaest} for every $\beta\in ]1-\alpha+\sigma,1[$ with each $\alpha\in ]0,1]$ and $\sigma\in[0,\alpha[$,
where $\delta>0$, and $\kappa$, $\gamma$ are some fixed positive constants satisfying
\begin{equation}\label{kap-gam-cd1}
  \kappa<\min\set{\frac{1}{2 C_2\beta},\frac{C_1 (1-\beta)^2}{32 C_2}},\quad
  \gamma < \min \set{\beta \kappa, \frac{\kappa}{2},\frac{1}{2C_2},\frac{C_1  \tilde{c} (1-\beta) (\alpha-\sigma) }{ 8 C_2 \alpha}}.
\end{equation}
Thus we finish the proof of Lemma \ref{lem-HoldRC}.
\end{proof}

Next we show Lemma \ref{lem-mocev}.
\begin{proof}[Proof of Lemma \ref{lem-mocev}]
According to Proposition \ref{prop-GC}, it suffices to prove that for all $t>0$ and $\xi >0$,
\begin{equation}\label{Targ3}
  - \partial_{\xi_0}\omega(\xi,\xi_0) \dot\xi_0(t)+ \Omega(\xi,t)\partial_\xi\omega(\xi,\xi_0) + D(\xi,t) + \epsilon \partial_{\xi\xi}\omega(\xi,\xi_0)<0,
\end{equation}
where $\omega(\xi,\xi_0)$ is given by \eqref{MOC3.1}-\eqref{MOC3.2} and
\begin{equation}\label{Dxit3}
\begin{split}
  D(\xi,t) \leq &
  \,C_1 \int_0^{\frac{\xi}{2}}\left(\omega(\xi+2\eta,\xi_0)+\omega (\xi-2\eta,\xi_0) -2\omega (\xi,\xi_0)\right) \frac{m(\eta^{-1})}{\eta}\textrm{d} \eta \\
  & +C_1 \int_{\frac{\xi}{2}}^\infty \big(\omega (2\eta+\xi,\xi_0)-\omega (2\eta-\xi,\xi_0) -2\omega (\xi,\xi_0)\big) \frac{m(\eta^{-1})}{\eta}\textrm{d}\eta,
\end{split}
\end{equation}
and
\begin{equation}\label{Omega3}
  \Omega(\xi,t) \leq -\frac{C_2}{m(\xi^{-1})}  D(\xi,t)+ C_2 \omega(\xi,\xi_0) + C_2 \xi \int_\xi^\infty \frac{\omega(\eta,\xi_0)}{\eta^2} \dd \eta.
\end{equation}
In \eqref{Targ3}, if $\partial_{\xi_0}\omega(\xi,\xi_0)$ or $\partial_\xi\omega(\xi,\xi_0)$ does not exist, the larger value of the one-sided derivative should be taken.

We divide into several cases to get \eqref{Targ3}, owing to the values of $\xi_0$ and $\xi$.

\textbf{Case 1: $\xi_0 > \delta$, $0 <\xi \leq \delta$.}

From $\omega(\xi,\xi_0)=(1-\beta)\kappa m(\delta^{-1})\delta+\gamma\int_\delta^{\xi_0}m(\eta^{-1})\dd \eta -\gamma m(\xi_0^{-1})(\xi_0 -\delta)+ \beta\kappa m(\delta^{-1})\xi$,
we have
\begin{equation}\label{omeges1}
\begin{split}
  \partial_{\xi_0}\omega(\xi,\xi_0)  = \gamma \xi_0^{-2} m'(\xi_0^{-1})\left( \xi_0 -\delta\right) \leq \gamma\alpha m(\xi_0^{-1}), \quad\textrm{and}\quad
  \partial_\xi \omega(\xi,\xi_0)  = \, \beta\kappa m(\delta^{-1}) ,
\end{split}
\end{equation}
and
\begin{align}\label{Mxi0}
  \omega(\xi,\xi_0)& \geq \omega(0+,\xi_0)
  = (1-\beta)\kappa m(\delta^{-1})\delta+ \gamma \int_\delta^{\xi_0}m(\eta^{-1})\dd\eta-\gamma m(\xi_0^{-1}) (\xi_0 -\delta) \nonumber \\
  & \geq (1-\beta)\kappa m(\delta^{-1})\delta +\gamma \xi_0^{\alpha-\sigma} m(\xi_0^{-1}) \int_\delta^{\xi_0} \frac{1}{\eta^{\alpha-\sigma}}\dd\eta - \gamma m(\xi_0^{-1}) (\xi_0-\delta) \nonumber \\
  & =  (1-\beta)\kappa m(\delta^{-1})\delta +\frac{\gamma}{1-\alpha+\sigma} m(\xi_0^{-1})\xi_0^{\alpha-\sigma}
  \left( \xi_0^{1-\alpha+\sigma} -\delta^{1-\alpha+\sigma} \right) -\gamma  m(\xi_0^{-1})\left(\xi_0 -\delta \right) \nonumber \\
  & =: M_{\xi_0,\delta},
\end{align}
and
\begin{equation}\label{omegdel}
  \omega(\xi,\xi_0)-\omega(0+,\xi_0)\leq \omega(\delta,\xi_0)-\omega(0+,\xi_0)=\beta\kappa m(\delta^{-1})\delta.
\end{equation}
Thus by using \eqref{xi0} and \eqref{omeges1}, we get
\begin{equation}\label{xi0-est0}
  -\partial_{\xi_0}\omega(\xi,\xi_0)\dot \xi_0(t)\leq \rho \alpha \gamma\left( m(\xi_0^{-1})\right)^2 \xi_0.
\end{equation}
In view of \eqref{MOC3.1}, we obtain
\begin{align}\label{omegint}
  & \int_\xi^\infty \frac{\omega(\eta,\xi_0)}{\eta^2} \dd\eta =\frac{\omega(\xi,\xi_0)}{\xi} + \int_\xi^\infty \frac{\partial_\eta\omega(\eta,\xi_0)}{\eta}\dd\eta \nonumber \\
  =\, & \frac{\omega(\xi,\xi_0)}{\xi} + \int_\xi^\delta \frac{\kappa \beta m(\delta^{-1})}{\eta} \dd\eta+\int_\delta^{\xi_0} \frac{\gamma m(\xi_0^{-1})}{\eta}\dd\eta + \int_{\xi_0}^\infty \frac{\gamma m(\eta^{-1})}{\eta} \dd\eta \nonumber \\
  \leq\, & \frac{\omega(\xi,\xi_0)}{\xi} + \kappa \beta m(\delta^{-1}) \left( \log \frac{\delta}{\xi}\right) + \gamma m(\xi_0^{-1})\left(\log\frac{\xi_0}{\delta}\right) + \gamma\xi_0^{\alpha-\sigma}m(\xi_0^{-1}) \int_{\xi_0}^\infty \frac{1}{\eta^{1-\alpha+\sigma}}\dd\eta \nonumber \\
  \leq\, & \frac{\omega(\xi,\xi_0)}{\xi} +\kappa \beta m(\delta^{-1}) \left( \log \frac{\delta}{\xi}\right)+\gamma m(\xi_0^{-1})\left(\log\frac{\xi_0}{\delta}\right) + \frac{\gamma}{\alpha-\sigma}m(\xi_0^{-1}).
\end{align}
Thus by using \eqref{Omega3}, \eqref{omegdel} and \eqref{omegint}, we find that for $\kappa \leq \frac{1}{4C_2\beta }$,
\begin{equation}\label{omeg-est0}
\begin{split}
  & \Omega(\xi,t)\partial_\xi \omega(\xi,\xi_0) \\
  \leq & - C_2\beta\kappa  D(\xi,t) + C_2\beta\kappa m(\delta^{-1}) \left( 2\omega(\xi,\xi_0)
  +\kappa \beta m(\delta^{-1}) \xi\left( \log \frac{\delta}{\xi}\right)
  +\gamma m(\xi_0^{-1}) \xi\left( \log \frac{\xi_0}{\delta} + \frac{1}{\alpha-\sigma} \right) \right)\\
  \leq & - C_2\beta\kappa  D(\xi,t)
  +  C_2 \beta \kappa m(\delta^{-1})\left( 2\omega(0+,\xi_0) + (C_0+2)\kappa\beta m(\delta^{-1})\delta +  \gamma  m(\xi_0^{-1}) \xi_0 \left( C_0 + \frac{1}{\alpha-\sigma} \right)\right) \, \\
  \leq & -\frac{1}{4}  D(\xi,t)
  + C_2\left( C_0  +2\right) \beta^2 \kappa^2 \left( m(\delta^{-1}) \right)^2 \delta
  + \frac{C_2\beta( C_0 + 1) \kappa\gamma }{\alpha-\sigma} m(\delta^{-1})  m(\xi_0^{-1}) \xi_0  \, + \\
  & \,+2C_2 \beta \kappa  m(\delta^{-1}) \omega(0+,\xi_0) ,
\end{split}
\end{equation}
where in the third line we also used $\frac{\xi}{\delta}\left( \log\frac{\delta}{\xi}\right)\leq C_0$ and $\frac{\xi}{\xi_0}\log \frac{\xi_0}{\delta}\leq C_0$.
For the contribution from the diffusion term, since the function $\omega(\eta,\xi_0)-\omega(0+,\xi_0)$ is still concave, we infer that
\begin{align}\label{D2xit00}
  D(\xi,t)
  \leq & -2 C_1 \omega(0+,\xi_0)\int_{\frac{\xi}{2}}^\infty \frac{m(\eta^{-1})}{\eta}\dd \eta \nonumber \\
  \leq & -2 C_1 \omega(0+,\xi_0) \left( \frac{\xi}{2}\right)^\alpha
  m\left(\frac{2}{\xi}\right) \int_{\frac{\xi}{2}}^\infty \frac{1}{\eta^{1+\alpha}} \dd\eta \nonumber \\
  \leq & -\frac{2C_1}{\alpha}  \omega(0+,\xi_0) m(\xi^{-1}),
\end{align}
and also by \eqref{Mxi0},
\begin{equation}\label{D2xit}
\begin{split}
  D(\xi,t) \leq  -\frac{2C_1}{\alpha} M_{\xi_0,\delta} m(\xi^{-1}).
\end{split}
\end{equation}
If $\xi_0 \geq N \delta$ with $N\in\N$ a suitable constant, we see that
\begin{equation*}
  \frac{1}{1-\alpha+\sigma}\left( \xi_0^{1-\alpha+\sigma} -\delta^{1-\alpha+\sigma}\right)\geq \frac{1-(1/N)^{1-\alpha+\sigma}}{1-\alpha+\sigma}\xi_0^{1-\alpha+\sigma}
  \geq \frac{1}{1-\frac{\alpha-\sigma}{2}}\xi_0^{1-\alpha+\sigma},
\end{equation*}
provided that $1-(1/N)^{1-\alpha+\sigma}\geq \frac{2(1-\alpha+\sigma)}{2-\alpha+\sigma}$, that is,
$N\geq \left(\frac{2-(\alpha-\sigma)}{\alpha-\sigma}\right)^{\frac{1}{1-(\alpha-\sigma)}}$, thus we may choose
\begin{equation}\label{Ncond}
  N := \Big[\Big(\frac{2-\alpha+\sigma)}{\alpha-\sigma}\Big)^{\frac{1}{1-\alpha+\sigma}}\Big]+1.
\end{equation}
Thus for the case $\xi_0\geq N\delta$, we get
\begin{equation}\label{Mxi0-est1}
\begin{split}
  M_{\xi_0,\delta} & \geq (1-\beta)\kappa m(\delta^{-1})\delta + \left(\frac{1}{1-(\alpha-\sigma)/2} -1 \right) \gamma\, m(\xi_0^{-1})\xi_0 \\
  & \geq (1-\beta)\kappa m(\delta^{-1})\delta + \gamma\, (\alpha-\sigma) m(\xi_0)\xi_0.
\end{split}
\end{equation}
Inserting the above estimate into \eqref{D2xit} leads to
\begin{equation}\label{D2xit-es}
\begin{split}
  D(\xi,t)\leq  -\frac{2C_1 (1-\beta) \kappa}{\alpha} m(\delta^{-1})\delta \,m(\xi^{-1}) - \frac{2C_1(\alpha-\sigma)\,\gamma}{\alpha} m(\xi_0^{-1})\xi_0 m(\xi^{-1}).
\end{split}
\end{equation}
Hence for $\xi_0\geq N \delta$ with $N$ satisfying \eqref{Ncond}, by \eqref{D2xit00} and setting $\kappa\leq \frac{C_1}{4C_2\beta\alpha}$ so that
\begin{equation}\label{diff-cont}
  2C_2\beta \kappa m(\delta^{-1}) \omega(0+,\xi_0)\leq \frac{C_1}{2\alpha} m(\xi^{-1})\omega(0+,\xi_0)\leq -\frac{1}{4}D(\xi,t),
\end{equation}
and by collecting \eqref{xi0-est0}, \eqref{omeg-est0} and \eqref{D2xit-es}, we deduce that
\begin{align*}
  \textrm{L.H.S. of \eqref{Targ3}}\;\leq & \,
  \kappa m(\delta^{-1})\delta \,m(\xi^{-1}) \Big( C_2\left( C_0 +2\right) \beta^2 \kappa -\frac{C_1 (1-\beta) }{\alpha} \Big) + \\
  & + \gamma m(\xi_0^{-1}) \xi_0 m(\xi^{-1})\Big( \rho \alpha
  +  \frac{C_2\beta( C_0 + 1)}{\alpha-\sigma}\kappa - \frac{C_1(\alpha-\sigma)}{\alpha}\Big)  < 0,
\end{align*}
where L.H.S. denotes left-hand side and the last inequality is guaranteed as long as $\rho$, $\kappa$ satisfy
\begin{equation}\label{rkg-cd1}
  \rho < \frac{C_1 (\alpha-\sigma)}{2\alpha^2}, \quad
  \kappa< \min \set{ \frac{1}{4C_2\beta}, \frac{C_1}{4C_2\beta\alpha},\frac{C_1 (\alpha-\sigma)^2}{2 C_2\left( C_0 + 1\right) \beta\alpha},
  \frac{C_1 (1-\beta)\,}{C_2(C_0+2)\beta^2\alpha}}.
\end{equation}
If $\xi_0\leq N\delta$ with $N$ satisfying \eqref{Ncond}, thanks to the fact
\begin{equation}\label{mfact2}
  m(\xi_0^{-1})\xi_0\leq m\left((N\delta)^{-1}\right) N \delta \leq N^{1-\alpha+\sigma} m(\delta^{-1})\delta\leq \frac{4}{\alpha-\sigma} m(\delta^{-1})\delta,
\end{equation}
and using \eqref{diff-cont} again, the positive contribution which is treated by \eqref{xi0-est0} and \eqref{omeg-est0} can further be bounded by
\begin{equation*}
\begin{split}
  &\,-\partial_{\xi_0}\omega(\xi,\xi_0)\dot\xi_0 + \Omega(\xi,t)\partial_\xi \omega(\xi,\xi_0) \\
  \leq\, & \, - \frac{1}{2} D(\xi,t) + \kappa\left( m(\delta^{-1})\right)^2\delta\,
  \bigg( \frac{4\rho \alpha}{\alpha-\sigma}\frac{\gamma}{\kappa} + C_2\left( C_0  +2\right) \beta^2 \kappa+\frac{4C_2(C_0 +1)\beta\, }{(\alpha-\sigma)^2}\gamma \bigg).
\end{split}
\end{equation*}
For the negative contribution from the diffusion term, from \eqref{Mxi0}, \eqref{D2xit} and \eqref{mfact2}, we directly get that by letting $\gamma\leq \frac{(1-\beta)(\alpha-\sigma)}{8}\kappa$,
\begin{equation}\label{D2xit-es2}
\begin{split}
  D(\xi,t) & \leq -\frac{2C_1}{\alpha}m(\xi^{-1})\Big( (1-\beta)\kappa\, m(\delta^{-1})\delta -\gamma  m(\xi_0^{-1})\xi_0 \Big) \\
  & \leq  -\frac{2C_1}{\alpha}\Big((1-\beta)\kappa- \frac{4\gamma}{\alpha-\sigma} \Big)\,\left( m(\delta^{-1})^2\delta  \right)
  \leq -\frac{C_1 \left( 1-\beta\right) \kappa}{\alpha} \left( m(\delta^{-1})^2 \delta\right).
\end{split}
\end{equation}
Hence for $\xi_0\leq N\delta$, we have
\begin{equation*}
\begin{split}
  \textrm{L.H.S. of \eqref{Targ3}}\,\leq &\, \kappa\left( m(\delta^{-1})\right)^2\delta\,
  \left( \frac{4\alpha}{\alpha-\sigma} \rho + C_2\left( C_0 +2\right) \beta^2 \kappa+\frac{4C_2(C_0 +1)\beta\, }{(\alpha-\sigma)^2}\gamma - \frac{C_1(1-\beta)}{2\alpha}\right) < 0,
\end{split}
\end{equation*}
where the last inequality is ensured if we set
\begin{equation}\label{rkg-cd2}
\begin{split}
  &\rho < \frac{C_1 (1-\beta)(\alpha-\sigma) }{24\alpha^2},\quad  \kappa< \min\set{\frac{C_1 (1-\beta)}{6C_2\left( C_0 +2\right) \beta^2\alpha},\frac{1}{4C_2\beta}}, \\
  & \gamma \leq \min\set{\frac{(1-\beta)(\alpha-\sigma)}{8}\kappa,\frac{C_1(1-\beta)(\alpha-\sigma)^2}{24C_2(C_0+1)\beta\alpha}}.
\end{split}
\end{equation}

\textbf{Case 2: $\xi_0>\delta$, $\delta<\xi\leq \xi_0$.}

From $\omega(\xi,\xi_0)= \kappa m(\delta^{-1})\delta + \gamma\int_\delta^{\xi_0}m(\eta^{-1})\dd \eta -\gamma m(\xi_0^{-1})\xi_0 + \gamma m(\xi_0^{-1})\xi$ in this case,
we have
\begin{equation*}
\begin{split}
  \partial_{\xi_0}\omega(\xi,\xi_0) =  \gamma m'(\xi_0^{-1})\xi_0^{-2} \left( \xi_0-\xi\right) \leq \alpha \gamma m(\xi_0^{-1}),\quad \textrm{and}\quad
  \partial_\xi \omega(\xi,\xi_0)  = \gamma m(\xi_0^{-1}),
\end{split}
\end{equation*}
and (recalling $M_{\xi_0,\delta}$ is defined in \eqref{Mxi0})
\begin{equation*}
\begin{split}
  \omega(\xi,\xi_0)\geq \omega(\delta,\xi_0)
  & \geq \kappa m(\delta^{-1})\delta+ \gamma \xi_0^{\alpha-\sigma}m(\xi_0^{-1}) \frac{1}{1-\alpha+\sigma} \left(\xi_0^{1-\alpha+\sigma} - \delta^{1-\alpha+\sigma} \right)
  -\gamma m(\xi_0^{-1})(\xi_0-\delta) \\
  & \geq \kappa m(\delta^{-1})\delta + \frac{\gamma}{1-\alpha+\sigma} m(\xi_0^{-1})\xi_0^{\alpha-\sigma} (\xi_0^{1-\alpha+\sigma}-\delta^{1-\alpha+\sigma}) -\gamma m(\xi_0^{-1})(\xi_0-\delta) \\
  & = M_{\xi_0,\delta} + \beta\kappa m(\delta^{-1})\delta,
\end{split}
\end{equation*}
and
\begin{equation}\label{omeg-xi0}
  \omega(\xi,\xi_0)-\omega(0+,\xi_0)\leq \omega(\xi_0,\xi_0)-\omega(0+,\xi_0)=\gamma m(\xi_0^{-1})(\xi_0-\delta) + \beta \kappa m(\delta^{-1})\delta.
\end{equation}
Thus by using \eqref{xi0}, we get
\begin{equation}\label{omeg-est1}
  -\partial_{\xi_0}\omega(\xi,\xi_0) \dot \xi_0(t) \leq \alpha \rho \gamma \left( m(\xi_0^{-1}) \right)^2 \xi_0.
\end{equation}
From the following estimate
\begin{equation*}
\begin{split}
  \int_\xi^\infty \frac{\omega(\eta,\xi_0)}{\eta^2} \dd\eta 
  & = \frac{\omega(\xi,\xi_0)}{\xi} + \int_\xi^{\xi_0} \frac{\gamma m(\xi_0^{-1})}{\eta}\dd\eta + \int_{\xi_0}^\infty \frac{\gamma m(\eta^{-1})}{\eta} \dd\eta \\
  & \leq \frac{\omega(\xi,\xi_0)}{\xi} + \gamma m(\xi_0^{-1})\left(\log\frac{\xi_0}{\xi}\right) + \gamma\xi_0^{\alpha-\sigma}m(\xi_0^{-1}) \int_{\xi_0}^\infty \frac{1}{\eta^{1-\alpha+\sigma}}\dd\eta \\
  & =  \frac{\omega(\xi,\xi_0)}{\xi} + \gamma m(\xi_0^{-1})\left(\log\frac{\xi_0}{\xi}\right) + \frac{\gamma}{\alpha-\sigma}m(\xi_0^{-1}),
\end{split}
\end{equation*}
and similarly as obtaining \eqref{omeg-est0}, we find that for $\gamma\leq \frac{1}{4C_2}$,
\begin{align}\label{omeg-est2}
  & \Omega(\xi,t)\partial_\xi \omega(\xi,\xi_0) \nonumber \\
  \leq & - C_2\gamma D(\xi,t) + 2 C_2 \gamma\omega(\xi,\xi_0) m(\xi_0^{-1})
  + C_2\left(\gamma m(\xi_0^{-1})\right)^2 \left( \xi \log \frac{\xi_0}{\xi} + \frac{\xi}{\alpha-\sigma} \right) \\
  \leq & -\frac{1}{4} D(\xi,t)
  + \frac{C_2 (C_0+3)}{\alpha-\sigma} \left( \gamma m(\xi_0^{-1})\right)^2 \xi_0 \, + 2C_2 \beta \gamma \kappa m(\delta^{-1})\delta\, m(\xi_0^{-1})
  + 2C_2\gamma m(\xi_0^{-1}) \omega(0+,\xi_0), \nonumber
\end{align}
where $C_0>0$ is the constant such that $\frac{\xi}{\xi_0}\log \frac{\xi_0}{\delta}\leq C_0$.

For the contribution from the diffusion term, we also have \eqref{D2xit00} and \eqref{D2xit}.

If $\xi_0\geq N\delta$ with $N\in\N$ defined by \eqref{Ncond},
by using \eqref{Mxi0-est1} and setting $\gamma<\frac{C_1}{4C_2\alpha}$, we deduce that
\begin{equation*}
\begin{split}
  \textrm{L.H.S. of \eqref{Targ3}}\;\leq
  & \;\kappa m(\delta^{-1})\delta \,m(\xi^{-1}) \left(2 C_2\beta \gamma -\frac{C_1(1-\beta) }{\alpha} \right) + \\
  & + \gamma m(\xi_0^{-1}) \xi_0 m(\xi^{-1})\left( \rho \alpha
  +  \frac{C_2( C_0 + 3)}{\alpha-\sigma}\gamma
  - \frac{C_1(\alpha-\sigma)}{\alpha}\right)  < 0,
\end{split}
\end{equation*}
where the last inequality is guaranteed as long as
\begin{equation}\label{rkg-cd3}
  \rho<\frac{C_1(\alpha-\sigma)}{2\alpha^2},\quad
  \gamma<\min\set{\frac{1}{4C_2},\frac{C_1}{4C_2\alpha},\frac{C_1 (1-\beta)}{2C_2 \beta \alpha},
  \frac{C_1(\alpha-\sigma)^2}{2C_2(C_0+3)\alpha}}.
\end{equation}
If $\xi_0\leq N\delta$ with $N$ satisfying \eqref{Ncond}, by applying \eqref{mfact2} and setting $\gamma<\frac{C_1}{4C_2\alpha}$,
the positive contribution treated by \eqref{omeg-est1} and \eqref{omeg-est2} can further be bounded as
\begin{equation*}
\begin{split}
  &\,-\partial_{\xi_0}\omega(\xi,\xi_0)\dot\xi_0(t) + \Omega(\xi,t)\partial_\xi \omega(\xi,\xi_0) \\
  \leq\, & \, - \frac{1}{2} D(\xi,t) + \frac{4\gamma}{\alpha-\sigma}  m(\delta^{-1}) \delta\,m(\xi_0^{-1})
  \Big( \rho \alpha + \frac{C_2(C_0 +3)\, }{\alpha-\sigma}\gamma \Big).
\end{split}
\end{equation*}
For the negative contribution from the diffusion term, by arguing as \eqref{D2xit-es2} we obtain that for $\gamma \leq \frac{(1-\beta)(\alpha-\sigma)}{8}\kappa$,
\begin{equation*}
\begin{split}
  D(\xi,t) & \leq -\frac{2C_1}{\alpha}m(\xi^{-1})\Big( (1-\beta)\kappa\, m(\delta^{-1})\delta -\frac{4\gamma}{\alpha-\sigma}  m(\delta^{-1})\delta \Big) \\
  & \leq  -\frac{2C_1}{\alpha} \frac{4\,\gamma}{\alpha-\sigma}m(\xi^{-1}) m(\delta^{-1})\delta .
\end{split}
\end{equation*}
Hence for $\xi_0\leq N\delta$ with $N$ given by \eqref{Ncond}, we have
\begin{equation*}
\begin{split}
  \textrm{L.H.S. of \eqref{Targ3}}\,\leq \, \frac{4\gamma}{\alpha-\sigma}m(\delta^{-1}) \delta\,m(\xi^{-1})
  \Big(  \rho \alpha  +\frac{C_2(C_0 +3)\, }{\alpha-\sigma}\gamma - \frac{C_1}{\alpha}\Big)  < 0,
\end{split}
\end{equation*}
where the last inequality is guaranteed if we set
\begin{equation}\label{rkg-cd4}
\begin{split}
  \rho < \frac{C_1 (\alpha-\sigma) }{2\alpha^2},\quad\gamma \leq \min\set{\frac{(1-\beta)(\alpha-\sigma)}{8}\kappa,\frac{1}{4C_2},\frac{C_1(\alpha-\sigma)}{2C_2(C_0+3)\alpha}}.
\end{split}
\end{equation}

\textbf{Case 3: $\xi_0>\delta$, $\xi> \xi_0$.}

In this case, from $\omega(\xi,\xi_0) = \kappa m(\delta^{-1})\delta + \gamma \int_\delta^\xi m(\eta^{-1})\dd \eta$, we see that $\partial_{\xi_0}\omega(\xi,\xi_0)=0$, $\partial_\xi\omega(\xi,\xi_0)=\gamma m(\xi^{-1})$, and
\begin{align*}
  & \int_\xi^\infty \frac{\omega(\eta,\xi_0)}{\eta^2}\dd\eta = \frac{\omega(\xi,\xi_0)}{\xi} + \int_\xi^\infty \frac{\gamma m(\eta^{-1})}{\eta}\dd \eta \\
  & \leq\,\frac{\omega(\xi,\xi_0)}{\xi} + \gamma \xi^{\alpha-\sigma} m(\xi^{-1})\int_\xi^\infty \frac{1}{\eta^{1+\alpha-\sigma}}\dd \eta
  \leq \frac{\omega(\xi,\xi_0)}{\xi} + \frac{\gamma}{\alpha-\sigma} m(\xi^{-1}).
\end{align*}
Thus in light of \eqref{Omega3}, we get
\begin{equation}\label{Oxe-est1}
  \Omega(\xi,t)\partial_\xi\omega(\xi,\xi_0) \leq - C_2 \gamma D(\xi,t) + C_2\Big( 2 \omega(\xi,\xi_0) + \frac{\gamma}{\alpha-\sigma}\xi m(\xi^{-1})\Big) \gamma m(\xi^{-1}).
\end{equation}

For the contribution from the diffusion term, since $\omega(2\eta+\xi,\xi_0)-\omega(2\eta-\xi,\xi_0)\leq \omega(2\xi,\xi_0)< 2\omega(\xi,\xi_0)$, by estimating as \eqref{D2xit00} we obtain
\begin{equation}\label{Dxe-est1}
  D(x,t) \leq C_1 \left( \omega(2\xi,\xi_0) - 2\omega(\xi,\xi_0) \right) \int_{\frac{\xi}{2}}^\infty\frac{m(\eta^{-1})}{\eta}\dd\eta
  \leq \frac{C_1}{\alpha} \left( \omega(2\xi,\xi_0)-2\omega(\xi,\xi_0)\right) m(\xi^{-1}).
\end{equation}
Observing that
\begin{equation*}
\begin{split}
  \omega(2\xi,\xi_0)-\omega(\xi,\xi_0)  = \gamma \int_\xi^{2\xi} m(\eta^{-1})\dd\eta
  \leq \gamma \xi^{\alpha-\sigma} m(\xi^{-1})\int_\xi^{2\xi} \frac{1}{\eta^{\alpha-\sigma}}\dd\eta
  \leq \frac{2^{1-\alpha+\sigma}-1}{1-\alpha+\sigma} \gamma\, m(\xi^{-1})\xi,
\end{split}
\end{equation*}
and
\begin{equation*}
\begin{split}
  \omega(\xi,\xi_0) \geq \gamma \int_\delta^\xi m(\eta^{-1})\dd\eta   \geq  \gamma \xi^{\alpha-\sigma} m(\xi^{-1})\int_\delta^\xi \frac{1}{\eta^{\alpha-\sigma}}\dd\eta
  \geq \gamma \xi^{\alpha-\sigma} m(\xi^{-1})\frac{\xi^{1-\alpha+\sigma} -\delta^{1-\alpha+\sigma}}{1-\alpha+\sigma},
\end{split}
\end{equation*}
thus if $\xi$ satisfies that $\xi \geq \delta \left( \frac{1}{2^{\alpha-\sigma}-1}\right)^{\frac{1}{1-\alpha+\sigma}}$, equivalently,
$\xi^{1-\alpha+\sigma} -\delta^{1-\alpha+\sigma}\geq \left( 2- 2^{\alpha-\sigma}\right) \xi^{1-\alpha+\sigma}$, we find
\begin{equation}\label{}
  \omega(\xi,\xi_0)\geq \frac{2-2^{\alpha-\sigma}}{1-\alpha+\sigma} \gamma m(\xi^{-1})\xi = 2^{\alpha-\sigma}  \frac{2^{1-\alpha+\sigma}-1}{1-\alpha+\sigma} \gamma m(\xi^{-1})\xi \geq\tilde{c} \gamma\, m(\xi^{-1})\xi,
\end{equation}
and
\begin{equation*}
  \omega(2\xi,\xi_0)-\omega(\xi,\xi_0)\leq \frac{2^{1-\alpha+\sigma} -1}{2-2^{\alpha-\sigma}} \omega(\xi,\xi_0) = 2^{-\alpha+\sigma} \omega(\xi,\xi_0),
\end{equation*}
and
\begin{equation}\label{om-fact1}
  \omega(2\xi,\xi_0) -2\omega(\xi,\xi_0) \leq -\left(1-2^{-\alpha+\sigma}\right)\omega(\xi,\xi_0)\leq - \frac{\tilde{c}(\alpha-\sigma)}{2}\omega(\xi,\xi_0),
\end{equation}
with $\tilde{c}:= \inf_{x\in ]0,1]}\set{\frac{2^x-1}{x}}>0$. Hence if $\xi\geq \delta \left( \frac{1}{2^{\alpha-\sigma}-1}\right)^{\frac{1}{1-\alpha+\sigma}}$,
and by gathering the above estimates and setting $\gamma\leq \frac{1}{2C_2}$, we deduce that
\begin{equation*}
  \Omega(\xi,t)\partial_\xi\omega(\xi,\xi_0)\leq -\frac{1}{2} D(\xi,t) + \frac{3C_2}{\tilde{c} (\alpha-\sigma)}\gamma\,\omega(\xi,\xi_0) m(\xi^{-1}),
\end{equation*}
and
\begin{equation*}
  D(\xi,t) \leq - \frac{C_1\tilde{c}(\alpha-\sigma)}{2\alpha} \omega(\xi,\xi_0) m(\xi^{-1}),
\end{equation*}
and
\begin{equation*}
  \Omega(\xi,t) \partial_\xi\omega(\xi,\xi_0)+ D(\xi,t)\leq \Big( \frac{3C_2}{\tilde{c} (\alpha-\sigma)}\gamma -\frac{C_1 \tilde{c}(\alpha-\sigma)}{2\alpha}\Big) \omega(\xi,\xi_0)m(\xi^{-1})<0,
\end{equation*}
where the last inequality is ensured if we set
\begin{equation}\label{rkg-cd4.1}
  \gamma < \min \set{\frac{1}{2C_2},\frac{C_1 \tilde{c}^2 (\alpha-\sigma)^2}{6 C_2 \alpha}}.
\end{equation}

On the other hand, if $\xi$ satisfies that $\xi \leq \delta \left( \frac{1}{2^{\alpha-\sigma}-1}\right)^{\frac{1}{1-\alpha+\sigma}}$,
since $\omega(\xi,\xi_0)-\omega(0+,\xi_0)$ is concave and $\omega(0+,\xi_0)\geq (1-\beta)\kappa m(\delta^{-1})\delta$,
we get
\begin{equation}\label{Dxe-est3}
  D(\xi,t)\leq - 2\omega(0+,\xi_0) \int_{\frac{\xi}{2}}^\infty \frac{m(\eta^{-1})}{\eta}\dd\eta\leq - \frac{2(1-\beta) \kappa}{\alpha}\delta m(\delta^{-1})m(\xi^{-1}),
\end{equation}
and by using $\xi^{1-\alpha+\sigma}-\delta^{1-\alpha+\sigma}\leq \delta^{1-\alpha+\sigma}\frac{ 2- 2^{\alpha-\sigma}}{2^{\alpha-\sigma}-1}$, we also infer that
\begin{equation*}
  m(\xi^{-1})\xi\leq \delta^{\alpha-\sigma}m(\delta^{-1}) \xi^{1-\alpha+\sigma}\leq m(\delta^{-1})\delta \frac{1}{2^{\alpha-\sigma}-1} \leq \frac{1}{\tilde{c}(\alpha-\sigma)} m(\delta^{-1})\delta,
\end{equation*}
and
\begin{equation}\label{om-est1}
\begin{split}
  \omega(\xi,\xi_0) 
  & \leq \kappa m(\delta^{-1})\delta + \gamma \delta^{\alpha-\sigma} m(\delta^{-1}) \int_\delta^\xi \frac{1}{\eta^{\alpha-\sigma}}\dd\eta \\
  & \leq \kappa m(\delta^{-1})\delta + \frac{\gamma}{1-\alpha+\sigma} \delta^{\alpha-\sigma}m(\delta^{-1}) \left( \xi^{1-\alpha+\sigma}-\delta^{1-\alpha+\sigma}\right) \\
  & \leq \left(\kappa + \frac{2^{\alpha-\sigma}(2^{1-\alpha+\sigma}-1)}{1-\alpha+\sigma} \frac{1}{2^{\alpha-\sigma}-1} \gamma  \right) m(\delta^{-1}) \delta
   \leq \left( \kappa + \frac{2\gamma}{\tilde{c}(\alpha-\sigma)} \right) m(\delta^{-1})\delta,
\end{split}
\end{equation}
where $\tilde{c}:= \inf_{x\in ]0,1]}\set{\frac{2^x-1}{x}}>0$ and we also used $\sup_{x\in ]0,1]}\frac{2^x-1}{x}\leq 1$. Hence if $\xi \leq \delta \left(\frac{1}{2^{\alpha-\sigma}-1}\right)^{\frac{1}{1-\alpha+\sigma}}$,
by collecting the above results and letting $\gamma \leq \min\set{\frac{1}{2C_2},\kappa}$, we obtain
\begin{equation*}
  \Omega(\xi,t)\partial_\xi\omega(\xi,\xi_0)\leq -\frac{1}{2}D(\xi,t) +
  \left( 2\gamma + \frac{4\gamma}{\tilde{c} (\alpha-\sigma)} + \frac{2\gamma}{\tilde{c} (\alpha-\sigma)^2}\right)\kappa\delta m(\delta^{-1}) m(\xi^{-1}),
\end{equation*}
and thus
\begin{equation*}
  \Omega(\xi,t)\partial_\xi\omega(\xi,\xi_0) + D(\xi,t) \leq \left(\frac{8\gamma}{\tilde{c} (\alpha-\sigma)^2} - \frac{1-\beta}{\alpha} \right) \kappa \delta m(\delta^{-1})m(\xi^{-1}),
\end{equation*}
where the last inequality is ensured by setting
\begin{equation}\label{rkg-cd4.2}
  \gamma <\min\set{\frac{\tilde{c}(1-\beta) (\alpha-\sigma)^2}{8 \alpha},\frac{1}{2C_2},\kappa}.
\end{equation}

\textbf{Case 4: $0<\xi_0\leq \delta$, $0<\xi<\xi_0$.}

In this case $\omega(\xi,\xi_0)=(1-\beta)\kappa m(\delta^{-1})\delta^{1-\beta}\xi_0^\beta + \beta \kappa m(\delta^{-1})\delta^{1-\beta}\xi_0^{\beta-1}\xi$, and thus
\begin{equation*}
\begin{split}
  \partial_{\xi_0}\omega(\xi,\xi_0) =\beta(1-\beta)\kappa m(\delta^{-1})\left( \frac{\delta}{\xi_0}\right)^{1-\beta} \left(1 -\frac{\xi}{\xi_0} \right), \quad \textrm{and}\quad
  \partial_\xi\omega(\xi,\xi_0)  = \beta \kappa m(\delta^{-1})\left( \frac{\delta}{\xi_0}\right)^{1-\beta},
\end{split}
\end{equation*}
and
\begin{equation}\label{om-est2}
\begin{split}
  \omega(\xi,\xi_0) & \geq \omega(0+,\xi_0)\geq (1-\beta)\kappa m(\delta^{-1})\delta^{1-\beta}\xi_0^\beta, \\
  \omega(\xi,\xi_0) & \leq \omega(\delta,\xi_0)\leq \kappa m(\delta^{-1})\delta^{1-\beta}\xi_0^\beta.
\end{split}
\end{equation}
Taking advantage of the following estimates
\begin{equation}\label{mfact}
  m(\delta^{-1})\leq \left(\frac{\xi}{\delta}\right)^{\alpha-\sigma} m(\xi^{-1}),\quad\textrm{and}\quad m(\xi_0^{-1})\leq\left(\frac{\xi}{\xi_0}\right)^{\alpha-\sigma} m(\xi^{-1}),
\end{equation}
we deduce
\begin{align}\label{omegxi0-3}
  -\partial_{\xi_0}\omega(\xi,\xi_0)\dot\xi_0(t) & \leq \rho \beta(1-\beta)\kappa m(\delta^{-1})\left( \frac{\delta}{\xi_0}\right)^{1-\beta} \left(\xi_0 -\xi \right) m(\xi_0^{-1}) \nonumber \\
  & \leq \rho \beta(1-\beta)\kappa m(\delta^{-1})\left( \frac{\delta}{\xi_0}\right)^{1-\beta} \xi_0 \left( \frac{\xi}{\xi_0}\right)^{\alpha-\sigma} m(\xi^{-1}) \nonumber \\
  & \leq \rho \beta(1-\beta)\kappa m(\delta^{-1}) \xi_0 \left( \frac{\delta}{\xi_0}\right)^{1+\alpha-\sigma-\beta} \left( \frac{\xi}{\delta}\right)^{\alpha-\sigma} m(\xi^{-1}) \nonumber \\
  & \leq \rho \beta(1-\beta) \kappa m(\delta^{-1}) \xi_0 m(\xi^{-1}),
\end{align}
and
\begin{equation*}
  -\frac{C_2}{m(\xi^{-1})}D(\xi,t) \partial_\xi\omega(\xi,\xi_0)\leq -C_2\beta\kappa \left( \frac{\delta}{\xi_0}\right)^{1-\beta-\alpha+\sigma} D(\xi,t)
  \leq -C_2\beta\kappa D(\xi,t).
\end{equation*}
In view of the integration by parts and \eqref{MOC3.2}, we see that
\begin{equation*}
\begin{split}
  \int_\xi^\infty \frac{\omega(\eta,\xi_0)}{\eta^2}\dd\eta & = \frac{\omega(\xi,\xi_0)}{\xi} + \int_\xi^\infty \frac{\partial_\eta\omega(\eta,\xi_0)}{\eta}\dd\eta \\
  & \leq \frac{\omega(\xi,\xi_0)}{\xi} + \int_\xi^{\xi_0}\frac{\beta \kappa m(\delta^{-1}) \delta^{1-\beta}\xi_0^{\beta-1}}{\eta}\dd\eta
  + \int_{\xi_0}^\infty \frac{\beta \kappa m(\delta^{-1})\delta^{1-\beta}\eta^{\beta-1}}{\eta}\dd\eta \\
  & \leq \frac{\omega(\xi,\xi_0)}{\xi} + \beta\kappa m(\delta^{-1})\left(\frac{\delta}{\xi_0}\right)^{1-\beta}\left( \log\frac{\xi_0}{\xi}\right) +
  \frac{\beta}{1-\beta}\kappa m(\delta^{-1})\left( \frac{\delta}{\xi_0}\right)^{1-\beta},
\end{split}
\end{equation*}
then gathering the above estimates and \eqref{Omega3} leads to that for $\kappa \leq \frac{1}{2C_2\beta}$,
\begin{equation}\label{Omeg-est3}
\begin{split}
  \Omega(\xi,t)\partial_\xi\omega(\xi,\xi_0) \leq & -C_2\beta\kappa D(\xi,t) + 2C_2 \beta\kappa m(\delta^{-1}) \left( \frac{\delta}{\xi_0}\right)^{1-\beta}\omega(\xi,\xi_0) \\
  & \, + C_2 \left( \beta\kappa m(\delta^{-1})\right)^2 \left(\frac{\delta}{\xi_0}\right)^{2(1-\beta)} \xi_0 \left( \frac{\xi}{\xi_0}\log\frac{\xi_0}{\xi} + \frac{\xi}{\xi_0}\frac{1}{1-\beta}\right) \\
  \leq & -\frac{1}{2} D(\xi,t) +C_2 \beta \left(\kappa m(\delta^{-1})\right)^2 \left(\frac{\delta}{\xi_0}\right)^{2(1-\beta)} \xi_0 \left( 2+ C_0\beta + \frac{\beta}{1-\beta} \right),
\end{split}
\end{equation}
where in the third line we used $\frac{\xi}{\xi_0}\leq 1$ and $\frac{\xi}{\xi_0}\left( \log\frac{\xi_0}{\xi}\right)\leq C_0$.
For the contribution from the diffusion term, by arguing as \eqref{D2xit}, we obtain
\begin{equation}\label{D2xit3}
\begin{split}
  D(\xi,t)  \leq - 2 C_1\omega(0+,\xi_0)\int_{\frac{\xi}{2}}^\infty \frac{m(\eta^{-1})}{\eta}\dd\eta
  \leq -\frac{2(1-\beta)C_1}{\alpha}\kappa m(\delta^{-1}) \left(\frac{\delta}{\xi_0}\right)^{1-\beta}\xi_0 m(\xi^{-1}).
\end{split}
\end{equation}
Collecting the estimates \eqref{omegxi0-3}, \eqref{Omeg-est3} and \eqref{D2xit3}, and using \eqref{mfact} again, we find that
\begin{equation*}
\begin{split}
  &\textrm{L.H.S. of \eqref{Targ3}}\,\\
  \leq\, & \kappa m(\delta^{-1}) \left(\frac{\delta}{\xi_0}\right)^{1-\beta}\xi_0 m(\xi^{-1})
  \bigg( \rho \beta(1-\beta) + \frac{C_2 \beta (2+C_0\beta)\kappa}{1-\beta} \left(\frac{\xi}{\xi_0}\right)^{\alpha-\sigma} \left( \frac{\delta}{\xi_0}\right)^{1-\beta-\alpha+\sigma}
  -\frac{C_1(1-\beta)}{\alpha}\bigg) \\
  \leq \, & \kappa m(\delta^{-1}) \left(\frac{\delta}{\xi_0}\right)^{1-\beta}\xi_0 m(\xi^{-1})
  \left( \rho \beta(1-\beta) + \frac{C_2 \beta (2+C_0\beta)\kappa}{1-\beta} -\frac{C_1(1-\beta)}{\alpha}\right),
\end{split}
\end{equation*}
which leads to the desired inequality \eqref{Targ3} as long as $\rho,\kappa$ are such that
\begin{equation}\label{rkg-cd5}
  \rho <\frac{C_1}{2\alpha\beta},\quad \kappa<\min\set{\frac{1}{2C_2\beta}, \frac{C_1(1-\beta)^2}{2C_2(2+C_0\beta)\beta\alpha}}.
\end{equation}

\textbf{Case 5: $0<\xi_0\leq \delta$, $\xi_0 \leq \xi\leq \delta$.}

Similarly as obtaining \eqref{omeg-est000}, we have that by setting $\kappa\leq \frac{1}{2C_2}$ and $\gamma\leq \kappa$,
\begin{equation}\label{Omeg-est-a}
  \Omega(\xi,t)\partial_\xi\omega(\xi,\xi_0) \leq -\frac{1}{2}D(\xi,t) + \frac{3C_2}{1-\beta} \beta \left( \kappa m(\delta^{-1})\delta^{1-\beta} \right)^2 \xi^{2\beta-1}.
\end{equation}
For the contribution from the diffusion term, if $\xi_0 \leq \frac{\xi}{4}$, then from $\xi-2\eta >\xi_0$ for all $\eta\in [0,\frac{\xi}{4}]$ and by arguing as \eqref{Dxi-es1}, we find
\begin{align}\label{Dxe-est4}
  D(\xi,t) & \leq C_1 \int_0^{\frac{\xi}{4}}\big(\omega(\xi+2\eta,\xi_0)+\omega (\xi-2\eta,\xi_0) -2\omega (\xi,\xi_0)\big) \frac{m(\eta^{-1})}{\eta}\textrm{d} \eta \nonumber \\
  & \leq C_1 \partial_{\xi\xi}\omega(\xi,\xi_0) \int_0^{\frac{\xi}{4}} \eta m(\eta^{-1}) \dd \eta \nonumber  \\
  & \leq - C_1 \beta (1-\beta)\kappa\, m(\delta^{-1})\delta^{1-\beta} \xi^{\beta-2} \int_0^{\frac{\xi}{4}} \eta m(\eta^{-1})\dd \eta \nonumber \\
  & \leq - C_1 \beta (1-\beta)\kappa\, \left( m(\delta^{-1})\right)^2 \delta^{1+\alpha-\sigma-\beta} \xi^{\beta-2} \int_0^{\frac{\xi}{4}} \eta^{1-\alpha+\sigma}\dd\eta \nonumber \\
  & \leq -\frac{C_1 \beta (1-\beta)\kappa}{32} \left( m(\delta^{-1})\right)^2 \delta^{1+\alpha-\sigma-\beta} \xi^{\beta-\alpha+\sigma}.
\end{align}
Thus for $\xi_0\leq \frac{\xi}{4}$, we get that by letting $\kappa< \frac{C_1 (1-\beta)^2}{192C_2}$,
\begin{equation*}
\begin{split}
  \Omega(\xi,t)\partial_\xi \omega(\xi,\xi_0) + D(\xi,t) & \leq \beta\kappa \left( m(\delta^{-1})\right)^2 \delta^{1+\alpha-\sigma-\beta} \xi^{\beta-\alpha +\sigma}
  \left( \frac{3C_2}{1-\beta} \left( \frac{\delta}{\xi}\right)^{1-\alpha+\sigma-\beta}\kappa -\frac{C_1 (1-\beta)}{64} \right) \\
  & \leq \beta\kappa \left( m(\delta^{-1})\right)^2 \delta^{1+\alpha-\sigma-\beta} \xi^{\beta-\alpha +\sigma}
  \left( \frac{3C_2}{1-\beta} \kappa -\frac{C_1 (1-\beta)}{64} \right) <0.
\end{split}
\end{equation*}
Whereas if $\xi_0\geq \frac{\xi}{4}$, by using \eqref{Dxit3}, the concavity of $\omega(\eta,\xi_0)-\omega(0+,\xi_0)$ for $\eta\geq 0$ and \eqref{mfact}, we get
\begin{equation}\label{Dxe-est5}
\begin{split}
  D(\xi,t) & \leq -C_1 2 \omega(0+,\xi_0) \int_{\frac{\xi}{2}}^\infty \frac{m(\eta^{-1})}{\eta}\dd\eta \\
  & \leq -\frac{2 C_1(1-\beta) \kappa m(\delta^{-1})\delta^{1-\beta}\xi_0^\beta}{\alpha}m(\xi^{-1}) \\
  & \leq -\frac{C_1 (1-\beta)\kappa m(\delta^{-1})\delta^{1-\beta}}{2\alpha} \xi^\beta m(\xi^{-1}) \\
  & \leq -\frac{C_1 (1-\beta)\kappa }{2\alpha} \left( m(\delta^{-1})\right)^2 \delta^{1+\alpha-\sigma-\beta}\xi^{\beta-\alpha+\sigma}.
\end{split}
\end{equation}
Thus combining this estimate with \eqref{Omeg-est-a} yields that
\begin{equation*}
  \Omega(\xi,t)\partial_\xi\omega(\xi,\xi_0) + D(\xi,t)\leq \kappa \left( m(\delta^{-1})\right)^2 \delta^{1+\alpha-\sigma-\beta} \xi^{\beta-\alpha+\sigma}
  \left( \frac{3C_2 \beta}{1-\beta}\kappa - \frac{C_1(1-\beta)}{2\alpha}\right)<0,
\end{equation*}
where the last inequality is ensured by setting $\kappa < \frac{C_1(1-\beta)^2}{6 C_2 \beta}$. Notice that in this case the conditions on $\kappa$ and $\gamma$ are
\begin{equation}\label{rkg-cd6}
  \kappa< \min\Big\{\frac{1}{2C_2},\frac{C_1 (1-\beta)^2}{192C_2}\Big\},\quad \gamma\leq \kappa.
\end{equation}

\textbf{Case 6: $0<\xi_0\leq \delta$, $\xi>\delta$.}

This case is almost the same as Case 2 in the proof of Lemma \ref{lem-HoldRC}, and we omit the details.
Note that the conditions on $\kappa$, $\gamma$ are given by \eqref{kap-gam-cd1}.

Therefore, for the MOC $\omega(\xi,\xi_0)$ defined by \eqref{MOC3.1}-\eqref{MOC3.2} and $\xi_0=\xi_0(t)$ defined by \eqref{xi0}
with $\rho,\kappa,\gamma$ are appropriate constants satisfying \eqref{kap-gam-cd1}, \eqref{rkg-cd1}, \eqref{rkg-cd2}, \eqref{rkg-cd3}, \eqref{rkg-cd4}, \eqref{rkg-cd4.1}, \eqref{rkg-cd5}, \eqref{rkg-cd6},
we justify \eqref{Targ3} for all $\xi>0$ and $t >0$ based on the above analysis, and thus conclude Lemma \ref{lem-mocev}. Observing that by suppressing the dependence on the constants $C_1=C_1(d)$, $C_2=C_2(d)$, $\tilde{c}$ and $C_0$,
the conditions on positive constants $\rho,\kappa,\gamma$ are as follows
\begin{equation}\label{rkg-cdsum}
\begin{split}
  \rho \leq\frac{1}{C}\frac{(1-\beta)(\alpha-\sigma)}{\alpha^2}, \quad \kappa\leq \frac{1}{C}(1-\beta)^2, \quad
  \gamma \leq \frac{1}{C} \min \set{\beta (1-\beta)^2 , (1-\beta)^3(\alpha-\sigma)},
\end{split}
\end{equation}
with $C>0$ some constant independent of $\alpha,\sigma,\beta$.

\end{proof}

\section{Proof of Theorems \ref{thmevRe} and \ref{thm:glb2}}\label{sec-thms}

Consider the following approximate system
\begin{equation}\label{appDDeq}
  \partial_t\theta^\epsilon + u^\epsilon \cdot\nabla \theta^\epsilon + \LL \theta^\epsilon -\epsilon \Delta\theta^\epsilon =0,\quad u^\epsilon=\mathcal{P}(\theta^\epsilon),
  \quad \theta^\epsilon|_{t=0}= \theta^\epsilon_0=\phi_\epsilon*(\theta_0 1_{B_{1/\epsilon}}),
\end{equation}
where $\mathcal{P}$ is composed of zero-order pseudo-differential operators defined by \eqref{uexp}, $1_{B_{1/\epsilon}}$ is the indicator function on the ball $B_{1/\epsilon}$,
$\phi_\epsilon(x)=\epsilon^{-d}\phi(\epsilon^{-1}x)$,
and $\phi\in C^\infty_c(\R^d)$ is a radial test function satisfying $\int_{\R^d}\phi=1$.

\subsection{Proof of Theorem \ref{thmevRe}: eventual regularity of vanishing viscosity solution}\label{subsec-evRe1}
From $\theta_0\in L^2(\R^d)$,
we have $\|\theta_0^\epsilon\|_{L^2}\leq \|\theta_0\|_{L^2}$, and $\|\theta_0^\epsilon\|_{H^s}\lesssim_{\epsilon,s}\|\theta_0\|_{L^2}$ for every $s>0$.
For $\epsilon>0$ and $s> d/2 +1$, we have the following lemma concerning the global well-posedness for the approximate system \eqref{appDDeq}.
\begin{lemma}\label{lem-apSy-glb}
  For every $\epsilon>0$, the Cauchy problem of the approximate drift-diffusion equation \eqref{appDDeq} admits a uniquely global smooth solution $\theta^\epsilon(x,t)$ such that
\begin{equation*}
  \theta^\epsilon\in C([0,\infty[; H^s(\R^d))\cap C^\infty(]0,\infty[\times \R^d),\quad \textrm{with}\;\; s> d/2+1.
\end{equation*}
\end{lemma}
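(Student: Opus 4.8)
The plan is the classical viscous-approximation argument: for a fixed $\epsilon>0$ the system \EQ{appDDeq} is a semilinear uniformly parabolic equation (leading operator $\epsilon\Delta$, lower-order nonlinearity $u^\epsilon\cdot\nabla\theta^\epsilon$ with $u^\epsilon=\mathcal{P}(\theta^\epsilon)$ a zero-order operator of $\theta^\epsilon$, and $\LL$ of order at most one since $m(|\zeta|)\lesssim|\zeta|$), so I would first establish local well-posedness and then push the solution to a global one via a priori bounds. For the local part I would run a contraction mapping in $C([0,T_\epsilon];H^s(\R^d))$ on the Duhamel formula, using the heat-semigroup smoothing $\|e^{\epsilon t\Delta}\|_{H^{s-1}\to H^s}\lesssim(\epsilon t)^{-1/2}\in L^1_{\mathrm{loc}}(\dd t)$ to absorb the single derivative lost in $u^\epsilon\cdot\nabla\theta^\epsilon$ and in $\LL\theta^\epsilon$; this produces a unique maximal solution on $[0,T^*_\epsilon[$, and a standard parabolic bootstrap upgrades it to $C^\infty(]0,T^*_\epsilon[\times\R^d)$. (Alternatively one may simply invoke Theorem \ref{thm-loc} applied to the regularized equation.) The accompanying $H^s$-energy estimate, using $\divg u^\epsilon=0$, a Kato--Ponce commutator bound, and $\langle|D|^s\LL\theta^\epsilon,|D|^s\theta^\epsilon\rangle=\int A(\zeta)|\zeta|^{2s}|\widehat{\theta^\epsilon}(\zeta)|^2\,\dd\zeta\ge 0$ (valid in Case (III) since $K\ge 0$, hence $A\ge 0$), gives $\frac{\dd}{\dd t}\|\theta^\epsilon\|_{H^s}^2\lesssim\|\nabla u^\epsilon\|_{L^\infty}\|\theta^\epsilon\|_{H^s}^2$, so that, by Gr\"onwall, a continuation criterion holds: $T^*_\epsilon<\infty$ forces $\int_0^{T^*_\epsilon}\|\nabla u^\epsilon(t)\|_{L^\infty}\dd t=\infty$, and it therefore suffices to bound this integral on every finite interval.

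Next I would record the a priori bounds. Since $\divg u^\epsilon=0$ and $A\ge 0$, the $L^2$ energy identity yields $\|\theta^\epsilon(t)\|_{L^2}\le\|\theta_0^\epsilon\|_{L^2}\le\|\theta_0\|_{L^2}$ for all $t$, together with $\epsilon\int_0^t\|\nabla\theta^\epsilon(s)\|_{L^2}^2\,\dd s\le\|\theta_0\|_{L^2}^2$; and since $K\ge 0$ both $\LL$ and $-\epsilon\Delta$ obey the maximum principle, so arguing as in \LEM{lem-theMP}(1) one gets $\|\theta^\epsilon(t)\|_{L^\infty}\le\|\theta_0^\epsilon\|_{L^\infty}\lesssim_\epsilon\|\theta_0\|_{L^2}$ (the mollified, compactly truncated datum $\theta_0^\epsilon=\phi_\epsilon*(\theta_01_{B_{1/\epsilon}})$ is bounded). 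All of these bounds hold on the whole maximal interval $[0,T^*_\epsilon[$.

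Finally, to close the argument I would show $\int_0^T\|\nabla u^\epsilon(t)\|_{L^\infty}\,\dd t<\infty$ for every finite $T<T^*_\epsilon$, which by the criterion above forces $T^*_\epsilon=\infty$. Here I would start from $\theta^\epsilon\in L^\infty([0,T];L^2)\cap L^2([0,T];\dot H^1)$ and the global $L^\infty$ bound, and run a finite parabolic bootstrap on the Duhamel formula $\theta^\epsilon(t)=e^{\epsilon(t-t_0)\Delta}\theta^\epsilon(t_0)+\int_{t_0}^{t}e^{\epsilon(t-s)\Delta}\big(-u^\epsilon\cdot\nabla\theta^\epsilon-\LL\theta^\epsilon\big)(s)\,\dd s$ on intervals $[t_0,T]$ with $t_0>0$: at each step the $\epsilon$-smoothing of $e^{\epsilon t\Delta}$, the Sobolev boundedness of $\mathcal{P}$, and the fact that $\LL$ has order $\le 1$ let one gain spatial derivatives, until $\nabla u^\epsilon\in L^1([t_0,T];L^\infty)$; near $t=0$ the local solution already lies in $C([0,T_\epsilon];H^s)$. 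Equivalently, since $\theta^\epsilon$ is a bounded solution of a parabolic drift-diffusion equation with divergence-free drift of bounded mean oscillation and subcritical nonlocal dissipation, the De Giorgi--Nash--Moser estimates (cf. \cite{CV}) give uniform interior H\"older regularity of $\theta^\epsilon$, hence of $u^\epsilon=\mathcal{P}(\theta^\epsilon)$ by Calder\'on--Zygmund theory, yielding the same conclusion. Uniqueness and the $C^\infty$-smoothing on $]0,\infty[\times\R^d$ then follow from the local theory. The main obstacle is precisely this last step: the naive $H^s$-energy estimate is only cubic and would blow up in finite time in general dimension, so one must genuinely exploit the $\epsilon\Delta$ smoothing (the constants of course degenerate as $\epsilon\to 0$, which is harmless here since $\epsilon$ is fixed); everything else is routine.
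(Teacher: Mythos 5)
Your proposal is correct, but it takes a genuinely different route from the one the paper has in mind. The paper's (omitted) proof defers to the nonlocal maximum principle method, citing \cite[Theorem 1.4]{LMX} in the $\alpha=2$ case: for fixed $\epsilon>0$ one shows the flow preserves a stationary modulus of continuity $\omega$ with $\omega'(0+)<\infty$ — the extra dissipative term $2\epsilon\partial_{\xi\xi}\omega$ in the criterion \EQ{keyGC} is precisely what makes this possible — yielding the time-uniform bound $\|\nabla\theta^\epsilon(t)\|_{L^\infty}\le\omega'(0+)$ and hence $T^*_\epsilon=\infty$ directly from the blowup criterion \EQ{blcr0}; the payoff is that the whole paper stays inside a single framework. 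You instead combine the $L^2$ and $L^\infty$ a priori bounds (correctly derived: $A\ge 0$ and $\divg u^\epsilon=0$ give the $L^2$ bound, $K\ge 0$ in Case (III) gives the comparison principle as in Lemma~\ref{lem-theMP}(1)) with a parabolic bootstrap, or equivalently De Giorgi--Nash--Moser for the uniformly parabolic equation with BMO divergence-free drift, genuinely exploiting the fixed-$\epsilon$ smoothing of $\epsilon\Delta$; this is the Caffarelli--Vasseur philosophy and is perfectly viable for fixed $\epsilon$, if not uniform in $\epsilon$. Two small refinements: the commutator estimate actually gives $\frac{\dd}{\dd t}\|\theta^\epsilon\|_{H^s}^2\lesssim\bigl(\|\nabla u^\epsilon\|_{L^\infty}+\|\nabla\theta^\epsilon\|_{L^\infty}\bigr)\|\theta^\epsilon\|_{H^s}^2$, so the continuation criterion should include $\|\nabla\theta^\epsilon\|_{L^\infty}$ (your final step controls both quantities, so nothing breaks); and if you go via De Giorgi--Nash--Moser you should note that $\LL$ enters the level-set energy estimates purely dissipatively through the C\'ordoba--C\'ordoba inequality $\psi'(\theta)\,\LL\theta\ge\LL(\psi(\theta))$, valid since $K\ge 0$, so it does not obstruct the H\"older gain driven by $\epsilon\Delta$ — the Duhamel bootstrap you also outline sidesteps this point entirely.
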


The proof of this lemma is more or less standard, and one can refer to \cite[Theorem 1.4]{LMX} (at $\alpha=2$ case) for the use of the nonlocal maximum principle method,
and we omit the details here.

Since $u^\epsilon$ is divergence-free, we can also show the uniform-in-$\epsilon$ energy estimate. By taking the $L^2$-inner product of the equation \eqref{appDDeq}
with $\theta^\epsilon$, and using the integration by parts, we have
\begin{equation}\label{ene-est1}
  \frac{1}{2}\frac{d}{dt}\|\theta^\epsilon(t)\|_{L^2}^2 + \int_{\R^d} \LL(\theta^\epsilon)(x,t)\,\theta^\epsilon(x,t)\dd x + \epsilon \|\nabla \theta^\epsilon(t)\|_{L^2}^2 =0.
\end{equation}
Since the symbol of $\LL$ satisfies $A(\zeta)\geq 0$ from \eqref{LKf} and \eqref{Kcond3}-\eqref{mDec2}, we see that
\begin{equation}\label{pos-fac}
  \int_{\R^d} (\LL\theta^\epsilon )(x,t)\,\theta^\epsilon(x,t)\dd x = \int_{\R^d} A(\zeta) |\widehat{\theta^\epsilon}(\zeta,t)|^2\dd \zeta \geq 0.
\end{equation}
Inserting \eqref{pos-fac} into \eqref{ene-est1} leads to $\frac{d}{dt}\|\theta^\epsilon(t)\|_{L^2}^2 \leq 0$, which by integrating in time implies
\begin{equation}\label{ene-est2}
  \|\theta^\epsilon(t)\|_{L^2}\leq \|\theta^\epsilon_0\|_{L^2}\leq \|\theta_0\|_{L^2},\quad \forall t\geq 0.
\end{equation}
By applying Lemma \ref{lem-symb}, we also obtain
\begin{equation}\label{Lthe-est}
\begin{split}
  \int_{\R^d} (\LL\theta^\epsilon )(x,t)\,\theta^\epsilon(x,t)\dd x
  & \geq C^{-1}\int_{\R^d} |\zeta|^{\alpha-\sigma}|\widehat{\theta^\epsilon}(\zeta,t)|^2\dd\zeta - C \int_{\R^d} |\widehat{\theta^\epsilon}(\zeta,t)|^2\dd\zeta \\
  & \geq C^{-1} \|\theta^\epsilon\|_{\dot H^{\frac{\alpha-\sigma}{2}}}^2 - C \|\theta^\epsilon\|_{L^2}^2.
\end{split}
\end{equation}
Plugging this estimate into \eqref{ene-est1}, and using \eqref{ene-est2}, we find
\begin{equation*}\label{ene-key0}
  \frac{d}{dt}\|\theta^\epsilon(t)\|_{L^2}^2 + \frac{2}{C}\|\theta^\epsilon(t)\|_{\dot H^{\frac{\alpha-\sigma}{2}}}^2 \leq 2C \|\theta_0\|_{L^2}^2,
\end{equation*}
which ensures that for every $T>0$,
\begin{equation}\label{ene-key}
  \sup_{t\in [0,T]}\|\theta^\epsilon(t)\|_{L^2}^2 + \frac{2}{C}\int_0^T \|\theta^\epsilon (t)\|_{\dot H^{\frac{\alpha-\sigma}{2}}}^2\dd t \leq (1+ 2C T)\|\theta_0\|_{L^2}^2.
\end{equation}

Next based on the uniform $L^2$ estimate, we can use the De Giorgi's method to show the $L^\infty$-improvement, that is,
for any fixed $t_0>0$ and every $T\geq t_0$, there is a constant $C_*>0$ independent of $\epsilon$ and $T$ so that
\begin{equation}\label{theLinf}
  \sup_{t\in [t_0,T]}\|\theta^\epsilon(t)\|_{L^\infty_x}\leq C_*\left( \frac{1}{t_0}+C\right)^{\frac{d}{2(\alpha-\sigma)}} (1+2CT)^{\frac{1}{2}}\|\theta_0\|_{L^2},
\end{equation}
with $C>0$ the constant appearing in \eqref{ene-key}.
The proof is similar to that of \cite[Corollary 4]{CV} or \cite[Theorem 2.1]{ConWu2}, 
and here we sketch the main process in obtaining \eqref{theLinf}.
Since the operator $\LL$ defined by \eqref{Lexp} has nonnegative kernel $K$, by arguing as obtaining a corresponding inequality for fractional Laplacian operator in \cite{CorC},
we have that for every convex function $\psi$, $\psi'(\theta^\epsilon)\, \LL(\theta^\epsilon) \geq \LL (\psi(\theta^\epsilon))$.
We also find for every convex $\psi$, $-\psi'(\theta^\epsilon)\,\Delta\theta^\epsilon \geq - \Delta (\psi(\theta^\epsilon))$.
For $M>0$ chosen later (cf. \eqref{Mdef}), applying the above two inequalities with
\begin{equation}\label{}
  \psi(\theta^\epsilon)=(\theta^\epsilon-M_k)_+ =: \theta^\epsilon_k, \quad M_k:=M(1-2^{-k}),\;\; k\in \N,
\end{equation}
we obtain the following pointwise inequality from \eqref{appDDeq},
\begin{equation}\label{}
  \partial_t \theta^\epsilon_k + u^\epsilon\cdot \nabla \theta^\epsilon_k + \LL \theta^\epsilon_k -\epsilon \Delta \theta^\epsilon_k \leq 0.
\end{equation}
As deriving the energy estimate, we use \eqref{Lthe-est} to get
\begin{equation}\label{ene-est4}
  \frac{1}{2} \frac{d}{dt}\|\theta^\epsilon_k(t)\|_{L^2}^2 + C^{-1} \|\theta^\epsilon_k(t)\|_{\dot H^{\frac{\alpha-\sigma}{2}}}^2 +\epsilon \|\nabla \theta^\epsilon_k(t)\|_{L^2}^2 \leq C \|\theta^\epsilon_k(t)\|_{L^2}^2.
\end{equation}
Then for a fixed constant $t_0>0$ and every $T\geq t_0$, we denote $T_k:=t_0(1-2^{-k})$, $k\in \N$, and the level set of energy as
\begin{equation*}
  U^\epsilon_k := \sup_{t\in [T_k,T]} \|\theta^\epsilon_k(t)\|_{L^2}^2 + \frac{2}{C} \int_{T^k}^T \|\theta_k^\epsilon(t)\|_{\dot H^{\frac{\alpha-\sigma}{2}}}^2\dd t.
\end{equation*}
For some $s\in [T_{k-1}, T_k]$, we integrating \eqref{ene-est4} in time between $s$ and $t\in [T_k,T]$, and also between $s$ and $T$ to find
\begin{equation*}
\begin{split}
  &  \|\theta_k^\epsilon(t)\|_{L^2}^2 \leq \|\theta_k^\epsilon(s)\|_{L^2}^2 + 2C\int_s^t \|\theta_k^\epsilon(t)\|_{L^2}^2 \dd t,\quad \textrm{and} \\
  \frac{2}{C} & \int_s^T \|\theta_k^\epsilon(t)\|_{\dot H^{\frac{\alpha-\sigma}{2}}}^2\dd t \leq \|\theta_k^\epsilon(s)\|_{L^2}^2 + 2 C\int_s^T \|\theta_k^\epsilon(t)\|_{L^2}^2 \dd t,
\end{split}
\end{equation*}
which implies $U_k^\epsilon \leq 2 \|\theta_k^\epsilon(s)\|_{L^2}^2 + 4C\int_s^T \|\theta_k^\epsilon(t)\|_{\dot H^{\frac{\alpha-\sigma}{2}}}^2\dd t$.
Taking the mean in $s$ on $[T_{k-1},T_k]$, we infer
\begin{equation}\label{Uk-est}
  U_k^\epsilon \leq \left(\frac{2^{k+1}}{t_0}+ 4C\right) \int_{T_{k-1}}^T \|\theta_k^\epsilon(t)\|_{L^2}^2\dd t.
\end{equation}
The inequality \eqref{Uk-est} is almost identical with \cite[(A.3)]{ConWu2}, and we can proceed further to obtain
\begin{equation*}
  U_k^\epsilon \leq \left( \frac{2}{t_0} + 4C\right) \frac{2^{k(q-1)}}{M^{q-2}} (U_{k-1}^\epsilon)^{q/2},\quad \textrm{with}\;\; q:= 2+\frac{2(\alpha-\sigma)}{d}.
\end{equation*}
Since $U_0^\epsilon\leq (1+2CT)\|\theta_0\|_{L^2}^2$, by choosing $M$ (owing to \cite[Lemma 2.6]{WuX}) to be
\begin{equation}\label{Mdef}
  M = (1+2 CT)^{1/2}\|\theta_0\|_{L^2}\left( 2^{2+\frac{d}{\alpha-\sigma}}\left(\frac{2}{t_0} + 4C\right) \right)^{\frac{d}{2(\alpha-\sigma)}},
\end{equation}
we have $\lim_{k\rightarrow \infty}U_k^\epsilon = 0$, which ensures $\theta^\epsilon\leq M$ for all $t\in [t_0,T]$. The same result likewise holds for $-\theta^\epsilon$,
and thus we conclude \eqref{theLinf}.

Hence, the uniform estimate \eqref{ene-key} and \eqref{theLinf} guarantee that, for some $t_0>0$ and every $T\geq t_0$, up to a subsequence
$\theta^\epsilon$ converges weakly (weakly-$*$ in $L^\infty_t L^2_x$ and $L^\infty([t_0,T]\times\R^d)$) to some function $\theta$ belonging to
\begin{equation}\label{eq:space}
L^\infty([0,T]; L^2(\R^d))\cap L^2([0,T]; \dot H^{\frac{\alpha-\sigma}{2}}(\R^d))\cap L^\infty([t_0,T]\times \R^d).
\end{equation}
Moreover, by using the compactness argument (e.g. \cite[Proposition 6.3]{MX-gQG}), we can show that
$\theta^\epsilon\rightarrow \theta$ and $u^\epsilon\rightarrow u=\mathcal{P}(\theta)$ both strongly in $L^2([0,T]; L^2_{\textrm{loc}}(\R^d))$. Thus we can pass the weak limit $\epsilon\rightarrow 0$
in the approximate system \eqref{appDDeq} to show that $\theta(x,t)$ is a global weak solution for the original equation \eqref{DDeq}-\eqref{uexp}, which satisfies the energy estimate
\eqref{ene-key} and $L^\infty$-estimate \eqref{theLinf} with $\theta$ in place of $\theta^\epsilon$.

Now applying Proposition \ref{prop-evRe} to the approximate equation \eqref{appDDeq} (with $\tilde\theta^\epsilon(t):=\theta^\epsilon(t+t_0)$ replacing $\theta^\epsilon(t)$) and Fatou's lemma,
we get that for every $\beta\in ]1-\alpha+\sigma,1[$ and every $T> t_0+t_1$,
\begin{equation}\label{theCb-est}
  \sup_{t\in [t_0+t_1, T]}\|\theta(t)\|_{\dot C^\beta(\R^d)}\leq C(\|\theta_0\|_{L^2},t_0,\alpha,\beta,\sigma,d,T), 
\end{equation}
with $t_1$ the time introduced above. Furthermore, \eqref{theCb-est} yields that for every $\beta\in ]1-\alpha+\sigma,1[$,
\begin{equation*}
\begin{split}
  \sup_{t\in [t_0+t_1,T]}\|u(t)\|_{C^\beta} & \leq C_0\sup_{t\in [t_0+t_1,T]}\|\theta(t)\|_{L^2} +C_0\sup_{t\in [t_0+t_1,T]}\|\theta(t)\|_{\dot C^\beta} \\
  & \leq C_0\, C(\|\theta_0\|_{L^2},t_0,\alpha,\beta,\sigma,d,T),
\end{split}
\end{equation*}
which together with Lemma \ref{lem-RC} implies the $C^\infty_{x,t}$-regularity of $\theta(x,t)$ for all $t\in ]t_0+ t_1,T]$.

Besides, if $\alpha\in ]0,1[$ and $\sigma=0$ in the condition (A3), i.e. $m(y)\equiv C_0 |y|^\alpha$, $\forall |y|>0$,
from \eqref{Aest2}, we have that there is no term $-\|\theta^\epsilon\|_{L^2}^2$ in \eqref{Lthe-est} and the constant $C$ in the right-hand side of \eqref{ene-key}, \eqref{theLinf} and \eqref{Mdef} can be replaced with $0$,
which guarantees that $T$ in \eqref{eq:space}-\eqref{theCb-est} can be chosen to be $\infty$. Next by choosing $\beta=1-\frac{\alpha}{2}$, we see that $\gamma=\frac{\alpha^4}{C}$,
and \eqref{t1-bdd2} just reduces to
\begin{equation}\label{t1-bdd3}
  t_1\leq \frac{C}{\alpha} \left( \frac{C(1-\alpha)}{\alpha^5}\right)^{\frac{\alpha}{1-\alpha}} \|\theta(t_0)\|_{L^\infty}^{\frac{\alpha}{1-\alpha}};
\end{equation}
moreover \eqref{theLinf} becomes
\begin{equation}\label{the-t0}
  \|\theta(t_0)\|_{L^\infty}\leq \left( \frac{C 2^{d/\alpha} }{t_0} \right)^{d/(2\alpha)} \|\theta_0\|_{L^2},
\end{equation}
which combined with \eqref{t1-bdd3} leads to \eqref{t1-bdd}. 
Thus we finish the proof of Theorem \ref{thmevRe}.

\subsection{Proof of Theorem \ref{thm:glb2}: global regularity result in the logarithmically supercritical case}

Considering the $\epsilon$-regularized equation \eqref{appDDeq} under the assumptions of Theorem \ref{thm:glb2},
by virtue of the standard Bony's para-differential calculus and Lemma \ref{lem-symb}, there is a uniquely global smooth solution $\theta^\epsilon(x,t)$ to the system \eqref{appDDeq} so that
$\theta^\epsilon\in C([0,\infty[; H^s(\R^d))\cap C^\infty(]0,\infty[\times \R^d)$ with $s> d/2+1$.
Owing to Lemma \ref{lem-theMP}, we have the uniform-in-$\epsilon$ $L^\infty$-estimate
$\sup_{t\geq 0}\|\theta^\epsilon(t)\|_{L^\infty}\leq B_0$ with
\begin{equation}\label{B0}
B_0:=
\begin{cases}
\|\theta_0\|_{L^\infty},\quad & \textrm{if Case (II) is considered,} \\
C(\|\theta_0\|_{L^2\cap L^\infty},\sigma,d),\quad & \textrm{if Case (III) is considered,}
\end{cases}
\end{equation}
and the uniform energy estimate $\|\theta^\epsilon(t)\|_{L^2}\leq \|\theta_0\|_{L^2}$, $\forall t\geq 0$ if \textrm{Case (III)} is considered.

We will apply the method of nonlocal maximum principle as in Section \ref{sec-evRe} to show the uniform-in-$\epsilon$ global regularity result.
Let $A_0\leq \frac{c_0}{2}$ be a positive constant chosen later ($c_0$ is the constant appearing in \eqref{Kcond1}), then analogous with \eqref{MOC3.1}-\eqref{MOC3.2}, we introduce the following family of moduli of continuity that for $\xi_0\in ]\delta, A_0]$,
\begin{equation}\label{MOC4.1}
  \omega(\xi,\xi_0)=
  \begin{cases}
    (1-\beta)\kappa m(\delta^{-1})\delta + \gamma\int_\delta^{\xi_0}m(\eta^{-1})\dd \eta -\gamma m(\xi_0^{-1})(\xi_0-\delta)+ \beta\kappa m(\delta^{-1})\xi, &  \textrm{for}\; 0 \leq \xi \leq \delta, \\
    \kappa m(\delta^{-1})\delta + \gamma\int_\delta^{\xi_0}m(\eta^{-1})\dd \eta -\gamma m(\xi_0^{-1})\xi_0 + \gamma m(\xi_0^{-1})\xi,& \textrm{for}\;\delta<\xi\leq \xi_0, \\
    \kappa m(\delta^{-1})\delta + \gamma \int_\delta^\xi m(\eta^{-1}) \dd \eta, & \textrm{for}\; \xi_0<\xi\leq c_0, \\
    \omega(c_0,\xi_0), &\textrm{for}\; \xi >c_0,
  \end{cases}
\end{equation}
and for $\xi_0 \leq \delta$,
\begin{equation}\label{MOC4.2}
  \omega(\xi,\xi_0)=
  \begin{cases}
    (1-\beta)\kappa m(\delta^{-1})\delta^{1-\beta}\xi_0^\beta + \beta \kappa m(\delta^{-1})\delta^{1-\beta}\xi_0^{\beta-1}\xi,
    &\quad \textrm{for}\; 0\leq \xi< \xi_0, \\
    \kappa m(\delta^{-1}) \delta^{1-\beta} \xi^\beta, &\quad \textrm{for}\; \xi_0\leq \xi\leq \delta, \\
    \kappa m(\delta^{-1}) \delta + \gamma \int_\delta^\xi m(\eta^{-1})\,\dd \eta, & \quad \textrm{for}\; \delta<\xi\leq c_0 \\
    \omega(c_0,\xi_0),&\quad\textrm{for}\; \xi>c_0,
  \end{cases}
\end{equation}
where $\beta\in ]\sigma,1[$, $\kappa,\gamma,\delta$ are appropriate positive constants, and $\xi_0=\xi_0(t)$ is given by
\begin{equation}\label{xi0-2}
  \frac{d}{dt}\xi_0= - \rho \, m(\xi_0^{-1}) \xi_0, \quad \xi_0(0)= A_0,
\end{equation}
with $\rho>0$ some constant chosen later.

First we prove that under the condition \eqref{mcd1}, and by suitably choosing $\delta$, the initial data $\theta_0$ strictly obeys the initial MOC $\omega(\xi,\xi_0(0))=\omega(\xi,A_0)$.
Indeed, it suffices to show
\begin{equation}\label{eq:Targ}
  \omega(0+,A_0) > 2B_0.
\end{equation}
Without loss of generality we also assume $A_0\leq c_2^{-1}$ (with $c_2\geq 1$ appearing in \eqref{mcd1}), and we see that
\begin{align}\label{eq:calcu}
  \omega(0+,A_0) & =(1-\beta)\kappa m(\delta^{-1})\delta + \gamma\int_\delta^{A_0}m(\eta^{-1})\dd \eta -\gamma m(A_0^{-1})(A_0-\delta) \nonumber\\
  & \geq \frac{\gamma}{c_2} \int_\delta^{A_0} \frac{1}{\eta(\log \eta^{-1} )^\mu}\dd \eta - \gamma m(1)  \nonumber \\
  & \geq \frac{\gamma}{c_2} \int_{\frac{1}{A_0}}^{\frac{1}{\delta}} \frac{1}{\eta(\log \eta )^\mu}\dd \eta - \gamma m(1) \\
  & \geq \nonumber
  \begin{cases}
    \frac{\gamma}{c_2(1-\mu)}\left( \left( \log\frac{1}{\delta}\right)^{1-\mu}- \left( \log \frac{1}{A_0}\right)^{1-\mu} \right)-\gamma m(1) , & \quad \textrm{if   }\mu\in[0,1[, \\
    \frac{\gamma}{c_2} \left( \log\log \frac{1}{\delta} -\log\log \frac{1}{A_0}\right) -\gamma m(1), & \quad \textrm{if   }\mu=1.
  \end{cases}
\end{align}
In order to achieve \eqref{eq:Targ}, if $\mu\in [0,1[$, we need
\begin{equation*}
  \log \frac{1}{\delta} >  \left[\left( \log \frac{1}{A_0}\right)^{1-\mu} + \frac{c_2(1-\mu)}{\gamma}\big(2B_0 +\gamma m(1)\big)\right]^{\frac{1}{1-\mu}},
\end{equation*}
and from the inequality $(a+b)^{\frac{1}{1-\mu}}\leq C_\mu (a^{\frac{1}{1-\mu}} + b^{\frac{1}{1-\mu}})$ for $a,b>0$, it suffices to choose $\delta$ as
\begin{equation}\label{del-cd1}
  \delta = A_0^{C_\mu} \exp\Big(- C_\mu \Big(\frac{c_2(1-\mu)}{\gamma}\big(3B_0 +\gamma m(1)\big)\Big)^{1/(1-\mu)}\Big);
\end{equation}
whereas if $\mu=1$, it suffices to set $\delta$ as
\begin{equation*}
  \log\log\frac{1}{\delta} =\log \log \frac{1}{A_0} + \frac{c_2}{\gamma}\big(3B_0+\gamma m(1) \big),
\end{equation*}
that is,
\begin{equation}\label{del-cd2}
  \delta =A_0^{\exp \left(\frac{c_2}{\gamma}\big(3B_0+\gamma m(1) \big)\right)}.
\end{equation}

Next by using \eqref{eq:calcu}, we see that the MOC defined by \eqref{MOC4.1}-\eqref{MOC4.2} satisfies for all $0<\xi_0\leq A_0$,
$$\omega(A_0,\xi_0)\geq \omega(A_0,0+)> \gamma \int_\delta^{A_0} m(\eta^{-1}) \dd \eta > 2 B_0 ,$$
thus according to Proposition \ref{prop-GC}, we only need to justify the following criterion
\begin{equation}\label{Targ4}
  - \partial_{\xi_0}\omega(\xi,\xi_0) \dot\xi_0(t)+ \Omega(\xi,t)\partial_\xi\omega(\xi,\xi_0) + D(\xi,t) + \epsilon \partial_{\xi\xi}\omega(\xi,\xi_0)<0,
\end{equation}
for all $t>0$, $0<\xi_0\leq A_0$, $0<\xi\leq A_0$ with $A_0\leq \frac{c_0}{2}$, and  
\begin{equation}\label{Dxit.0}
\begin{split}
  D(\xi,t) \leq &
  \,C_1'\omega(\xi,\xi_0) + C_1 \int_0^{\frac{\xi}{2}}\left(\omega(\xi+2\eta,\xi_0)+\omega (\xi-2\eta,\xi_0) -2\omega (\xi,\xi_0)\right) \frac{m(\eta^{-1})}{\eta}\textrm{d} \eta \\
  & + C_1 \int_{\frac{\xi}{2}}^{A_0} \big(\omega (2\eta+\xi,\xi_0)-\omega (2\eta-\xi,\xi_0) -2\omega (\xi,\xi_0)\big) \frac{m(\eta^{-1})}{\eta}\textrm{d}\eta,
\end{split}
\end{equation}
and
\begin{equation}\label{Omega.0}
  \Omega(\xi,t) \leq -\frac{C_2}{m(\xi^{-1})}  D(\xi,t)+ (C_2+C_2') \omega(\xi,\xi_0) + C_2 \xi \int_\xi^\infty \frac{\omega(\eta,\xi_0)}{\eta^2} \dd \eta,
\end{equation}
and $C_1=C_1(d), C_2=C_2(d)>0$, and $C_1'$, $C_2'$ are just $0$ if {\textrm Case (II)} is assumed, and are the constants (depending only on $d,\tilde\alpha,c_0,c_1$) respectively appearing in \eqref{Dxi-est2} and \eqref{Ome-es2} if {\textrm Case (III)} is assumed.

By arguing as Lemma \ref{lem-mocev}, we indeed can prove the desired inequality \eqref{Targ4} as long as that $\rho,\kappa,\gamma$ are suitable constants satisfying \eqref{rkg-cdsum}
(maybe with slightly different $C$) and $A_0$ satisfying
\begin{equation}\label{A0bd1}
  0< A_0 \leq \min \Big\{\Big( \frac{C_1 \tilde{c}\,m(1) (1-\beta)(1-\sigma)}{64C_1'}\Big)^{1/(1-\sigma)},\frac{c_0}{2},c_2^{-1}\Big\},
\end{equation}
We will present the different points compared to the proof of Lemma \ref{lem-mocev} in the end of this subsection.

Then at the time $t_1$ satisfying $\xi_0(t_1)=0$, we have that $\theta^\epsilon(x,t_1)$ uniformly-in-$\epsilon$ obeys the MOC $\omega(\xi)=\omega(\xi,0+)$ given by
\begin{equation}\label{MOC4.3}
\omega(\xi)=
\begin{cases}
  \kappa\, m(\delta^{-1}) \delta^{1-\beta} \xi^\beta, & \quad \textrm{for}\;\; 0\leq \xi \leq \delta, \\
  \kappa\, m(\delta^{-1})\delta + \gamma \int_\delta^\xi m(\eta^{-1})\dd\eta, & \quad \textrm{for} \;\; \delta< \xi \leq c_0,\\
  \omega(c_0), & \quad \textrm{for} \;\; \xi > c_0,
\end{cases}
\end{equation}
with $\kappa,\gamma>0$ the suitable constants satisfying \eqref{rkg-cdsum}.
In a similar way as obtaining \eqref{eq:claim}, we moreover get that the MOC $\omega(\xi)$ given by \eqref{MOC4.3} is uniformly-in-$\epsilon$ strictly preserved by the solution $\theta^\epsilon(x,t_1)$.
Then we can argue as the proof of Lemma \ref{lem-HoldRC} and Steps 5-6 in the proof of \eqref{Targ4} to show that for every $t>t_1$ and $0<\xi\leq A_0$,
\begin{equation}\label{Targ5}
  \Omega(\xi,t)\omega'(\xi) + D(\xi,t) + \epsilon \omega''(\xi) < 0,
\end{equation}
where $D(\xi,t)$ and $\Omega(\xi,t)$ are given by \eqref{Dxit.0} and \eqref{Omega.0} with $\omega(\cdot)$ in place of $\omega(\cdot,\xi_0)$,
which guarantees that $\theta^\epsilon(x,t)$ uniformly-in-$\epsilon$ strictly preserve such a MOC $\omega(\xi)$ for all time $t\geq t_1$.

From \eqref{rkg-cdsum}, we can choose the positive constants $\rho,\kappa,\gamma$ as
\begin{equation*}
  \rho = (1-\beta)(1-\sigma)/\overline{C}, \quad \kappa = (1-\beta)^2/\overline{C}, \quad
  \gamma =  \min \set{\beta (1-\beta)^2 , (1-\beta)^3(1-\sigma)}/\overline{C},
\end{equation*}
with $\overline{C}>0$ the suitable constant depending only on $d$, $C_2'$ (and $a, |\Psi|$).
Thanks to \eqref{t1-est}, \eqref{mcd1} and \eqref{Cbetaest}, we find that the eventual regularity time $t_1$ satisfies
\begin{equation}\label{t1-bdd4}
  t_1\leq  \frac{1}{(1-\sigma)\rho \,m(A_0^{-1})} \leq \frac{ \overline{C} c_2}{(1-\sigma)^2(1-\beta)} A_0 (\log A_0^{-1})^\mu \leq \frac{ \overline{C} C_0 c_2}{(1-\sigma)^2(1-\beta)} A_0^{\frac{1}{2}},
\end{equation}
and for every $\beta\in ]\sigma,1[$,
\begin{equation}\label{the-est1}
\begin{split}
    \sup_{t\in [t_1,\infty[} &  \|\theta^\epsilon(t)\|_{\dot C^\beta(\R^d)}\leq\kappa m(\delta^{-1})\delta^{1-\beta} \leq \kappa\,m(1) \delta^{-\beta} \\
    & \leq
    \begin{cases}
      \frac{\overline{C}m(1)}{(1-\beta)^2}  A_0^{-C_\mu \beta}\exp\Big( \beta C_\mu \left(\frac{3  c_2(1-\mu)B_0 }{\gamma}+ c_2(1-\mu)m(1)\right)^{1/(1-\mu)}\Big), &\quad
      \textrm{if    } \mu\in [0,1[, \\
      \frac{\overline{C}m(1)}{(1-\beta)^2} \left( A_0^{-1}\right)^{\beta \exp \big(\frac{ 3 c_2B_0}{\gamma}+ c_2 m(1) \big)}, &\quad \textrm{if    }\mu=1.
    \end{cases}
\end{split}
\end{equation}

Now for any $t_*>0$, by virtue of \eqref{t1-bdd4}, we also need $A_0$ satisfies that $\frac{ \overline{C} C_0 c_2}{(1-\sigma)^2(1-\beta)} A_0^{\frac{1}{2}} \leq \frac{t_*}{2}$, i.e. $A_0\leq \left( \frac{(1-\sigma)^2(1-\beta) t_*}{2\overline{C} C_0 c_2}\right)^2$,
thus for each $\sigma\in [0,1[$ and $\beta\in ]\sigma,1[$, we can choose $A_0$ to be
\begin{equation}
  A_0 = \min \Big\{\Big(\frac{C_1 \tilde{c}\,m(1) (1-\beta)(1-\sigma)}{64C_1'}\Big)^{1/(1-\sigma)},\frac{c_0}{2}, c_2^{-1}, \Big( \frac{(1-\sigma)^2(1-\beta) t_*}{2\overline{C} C_0 c_2}\Big)^2\Big\},
\end{equation}
so that the uniform-in-$\epsilon$ H\"older estimate \eqref{the-est1} holds true.
According to Lemma \ref{lem-RC} and the Calder\'on-Zygmund theorem, we can further get $\theta^\epsilon\in C^\infty([t_*,\infty[\times \R^d)$ uniformly in $\epsilon$.

If Case (III) is considered and $\divg u=0$, $\theta_0\in L^2\cap L^\infty(\R^d)$, in a similar way as deriving \eqref{ene-key}, we apply Lemma \ref{lem-symb} to get that $\theta^\epsilon \in L^\infty([0,T]; L^2(\R^d))\cap L^2([0,T]; \dot H^{\frac{1-\sigma}{2}(\R^d)})$
uniformly-in-$\epsilon$ for every $T>0$. Since $u$ is divergence-free, for any $T>0$, similarly as the corresponding part at Subsection \ref{subsec-evRe1}, we can pass $\epsilon\rightarrow 0$ in \eqref{appDDeq}
to obtain the existence of weak solution $\theta \in L^\infty([0,T]; L^2(\R^d))\cap L^2([0,T]; \dot H^{\frac{1-\sigma}{2}}(\R^d))$ to the equation \eqref{DDeq}-\eqref{uexp} which also satisfies $\theta\in  C^\infty([t_*,T]\times \R^d)$, as desired.

If Case (II) is considered, $\theta_0\in C_0(\R^d)$, and there is no divergence-free condition of $u$, we can pass $\epsilon\rightarrow 0$ to get a limit function $\theta\in L^\infty([0,\infty[\times \R^d)\cap C^\infty([t_*,\infty[\times\R^d)$.
For any $t_*>0$, the limit function $\theta$ on the time period $[t_*,\infty[$ satisfies the equation \eqref{DDeq}-\eqref{uexp} (but it is not so clear whether $\theta$ is a weak solution to \eqref{DDeq}-\eqref{uexp} on $[0,t_*]$).\\

Finally, we state the different points of proving \eqref{Targ4} compared to the proof of \eqref{Targ3}.

Case 1: $\delta<\xi_0\leq A_0$, $0<\xi\leq \delta$.

Since $\partial_\eta\omega(\eta,\xi_0)=0$ for all $\eta>c_0$, we can prove the estimates analogous to \eqref{omegint} and \eqref{omeg-est0} with $C_2+C_2'$ in place of $C_2$. For the contribution from the diffusion term,
we have (noting that $\alpha=1$)
\begin{align}\label{Dxe-est2}
  D(\xi,t) &  \leq C_1' \omega(\xi,\xi_0) - 2C_1 \omega(0+,\xi_0) \int_{\frac{\xi}{2}}^{A_0} \frac{m(\eta^{-1})}{\eta}\dd\eta \nonumber \\
  & \leq C_1' \omega(\xi,\xi_0) - 2C_1\omega(0+,\xi_0) \left( \frac{\xi}{2}\right) m\left( \frac{2}{\xi}\right) \int_{\frac{\xi}{2}}^\xi  \frac{1}{\eta^2} \dd\eta \nonumber \\
  & \leq C_1' \left( \omega(0+,\xi_0) + \beta \kappa m(\delta^{-1})\delta\right) - C_1 \omega(0+,\xi_0) m(\xi^{-1}) \nonumber \\
  & \leq C_1' \beta \kappa m(\delta^{-1})\delta - \frac{C_1}{2} \omega(0+,\xi_0)m(\xi^{-1}),
\end{align}
where in the last line we used the estimate $m(\xi^{-1})\geq m(A_0^{-1})\geq \frac{2C_1'}{C_1}$, which is implied by a stronger condition $A_0\leq \left( \frac{C_1 m(1)}{2C_1'}\right)^{1/(1-\sigma)}$.
Since $\omega(0+,\xi_0)\geq M_{\xi_0,\delta}$ by \eqref{Mxi0}, thus if $\xi_0 \leq N\delta$ with $N\in\N$ defined in \eqref{Ncond}, thanks to \eqref{Mxi0-est1}, we get
\begin{equation*}{}
\begin{split}
  D(\xi,t) & \leq  C_1' \beta \kappa m(\delta^{-1})\delta - \frac{C_1 (1-\beta) \kappa}{4} m(\delta^{-1})\delta \,m(\xi^{-1}) - \frac{C_1(1-\sigma)\,\gamma}{4} m(\xi_0^{-1})\xi_0 m(\xi^{-1}) \\
  & \leq - \frac{C_1 (1-\beta) \kappa}{8} m(\delta^{-1})\delta \,m(\xi^{-1}) - \frac{C_1(1-\sigma)\,\gamma}{4} m(\xi_0^{-1})\xi_0 m(\xi^{-1}),
\end{split}
\end{equation*}
where in the second line we used $A_0 \leq \left( \frac{C_1 m(1) (1-\beta)}{8 C_1' \beta} \right)^{\frac{1}{1-\sigma}}$;
whereas if $\xi_0\leq N\delta$, by virtue of \eqref{D2xit-es2}, we see that through setting $\gamma \leq \frac{(1-\beta)(1-\sigma)\kappa}{16} $,
\begin{align*}
  D(\xi,t) & \leq C_1' \beta \kappa m(\delta^{-1})\delta - \frac{C_1}{4} \left( (1-\beta)\kappa\, m(\delta^{-1})\delta -\gamma  m(\xi_0^{-1})\xi_0 \right) m(\xi^{-1}) \\
  & \leq -\frac{C_1}{4} \left( \frac{(1-\beta) \kappa}{2}-\frac{4\gamma}{1-\sigma} \right) \left( m(\delta^{-1}) \right)^2 \delta \leq -\frac{C_1(1-\beta)\kappa}{16} \left( m(\delta^{-1})\right)^2 \delta,
\end{align*}
where we also used $A_0 \leq \left( \frac{C_1 m(1) (1-\beta)}{8 C_1' \beta} \right)^{\frac{1}{1-\sigma}}$. Hence under the conditions \eqref{rkg-cd1}, \eqref{rkg-cd2}
(up to some pure numbers and $C_2$ replaced by $C_2+C_2'$), we show that \eqref{Targ4} holds in this case.

Case 2: $\delta<\xi_0\leq A_0$, $\delta<\xi\leq \xi_0$.

The different points are quite similar to those stated in Case 1 above, and under the (slightly modified) conditions \eqref{rkg-cd3} and \eqref{rkg-cd4},
we can show \eqref{Targ4} in this case.

Case 3: $\delta<\xi_0\leq A_0$, $\xi_0<\xi\leq A_0$.

We obtain \eqref{Oxe-est1} with $C_2$ replaced by $C_2+C_2'$. For $D(\xi,t)$, similarly as \eqref{Dxe-est1} and \eqref{Dxe-est2}, we have
\begin{align*}
  D(\xi,t) & \leq C_1'\omega(\xi,\xi_0) + C_1 \big(\omega(2\xi,\xi_0)-2\omega(\xi,\xi_0) \big) \int_{\frac{\xi}{2}}^{A_0}\frac{m(\eta^{-1})}{\eta}\dd \eta \\
  & \leq C_1'\omega(\xi,\xi_0) + \frac{C_1}{2} \big(\omega(2\xi,\xi_0) -2\omega(\xi,\xi_0)\big) m(\xi^{-1}),
\end{align*}
thus if $\xi \geq \delta \left( \frac{1}{2^{1-\sigma}-1}\right)^{1/\sigma}$, by using \eqref{om-fact1}, we get
\begin{equation*}
\begin{split}
  D(\xi,t) \leq C_1' \omega(\xi,\xi_0) - \frac{C_1\tilde{c}(1-\sigma)}{4} \omega(\xi,\xi_0) m(\xi^{-1})
  \leq -\frac{C_1 \tilde{c} (1-\sigma)}{8}\omega(\xi,\xi_0) m(\xi^{-1}),
\end{split}
\end{equation*}
where $\tilde{c} = \inf_{x\in ]0,1]}\set{\frac{2^x-1}{x}}>0$ and in the last inequality we used
$ A_0 \leq \left( \frac{C_1 m(1)\tilde{c}\, (1-\sigma)}{ 8 C_1' }\right)^{\frac{1}{1-\sigma}}$.
If $\xi \leq \delta \left( \frac{1}{2^{1-\sigma}-1}\right)^{1/\sigma}$, by arguing as \eqref{Dxe-est3} and \eqref{Dxe-est2}, and using \eqref{om-est1}, we find
\begin{align*}
  D(\xi,t) & \leq C_1' \omega(\xi,\xi_0) - \frac{\omega(0+,\xi_0)}{2} m(\xi^{-1}) \\
  & \leq C_1' \left(\kappa + \frac{\tilde{c}\gamma}{2(1-\sigma)} \right) m(\delta^{-1})\delta - \frac{(1-\beta)\kappa }{2} m(\delta^{-1})\delta m(\xi^{-1}) \\
  & \leq -\frac{(1-\beta)\kappa}{4} m(\delta^{-1})\delta m(\xi^{-1}),
\end{align*}
where in the last line we used $\gamma\leq \kappa$ and
$A_0 \leq \left( \frac{C_1 m(1)\, (1-\beta) (1-\sigma)}{ 4 C_1' }\right)^{\frac{1}{1-\sigma}}$.
The remaining proof is similar to Case 3 in the proof of Lemma \ref{lem-mocev}, and \eqref{Targ4} holds in this case under (slightly modified) \eqref{rkg-cd4.1} and \eqref{rkg-cd4.2}.

Case 4: $0<\xi_0\leq \delta$, $0<\xi< \xi_0$.

We have \eqref{Omeg-est3} with $C_2$ replaced by $C_2 +C_2'$. In a similar treatment as \eqref{D2xit3} and \eqref{Dxe-est2}, we infer
\begin{equation*}
  D(x,t)\leq C_1' \omega(\xi,\xi_0) - \frac{C_1}{2}\omega(0+,\xi_0) m(\xi^{-1})\leq -\frac{C_1}{4}\omega(0+,\xi_0) m(\xi^{-1}),
\end{equation*}
where in the last inequality we used the fact $\omega(\xi,\xi_0)\leq \frac{1}{1-\beta}\omega(0+,\xi_0)$ (from \eqref{om-est2}) and the condition $A_0\leq \left( \frac{C_1 m(1)(1-\beta)}{4C_1'}\right)^{\frac{1}{1-\sigma}}$. Thus we can obtain \eqref{Targ4} in this case under (slightly modified) \eqref{rkg-cd5}.

Case 5: $0<\xi_0\leq \delta$, $\xi_0\leq \xi\leq \delta$.

We have \eqref{Omeg-est-a} with $C_2$ replaced by $C_2 +C_2'$. If $\xi_0\leq \frac{\xi}{4}$, \eqref{Dxe-est4}, \eqref{mfact} and the formula $\omega(\xi,\xi_0)= \kappa m(\delta^{-1}) \delta^{1-\beta} \xi^\beta$ lead to
\begin{align*}
  D(\xi,t) & \leq C_1' \omega(\xi,\xi_0) - \frac{C_1 \beta(1-\beta)\kappa}{32 } m(\delta^{-1})\delta^{1-\beta}\xi^\beta m(\xi^{-1}) \\
  & \leq - \frac{C_1 \beta(1-\beta)\kappa}{64 } (m(\delta^{-1}))^2\delta^{2-\sigma-\beta}\xi^{\beta-1+\sigma} ,
\end{align*}
where in the last line we used
$A_0 \leq \left( \frac{C_1 m(1) \beta(1-\beta) }{64 C_1'}\right)^{\frac{1}{1-\sigma}}$;
whereas if $\xi_0\geq \frac{\xi}{4}$, by arguing as \eqref{Dxe-est5} and \eqref{Dxe-est2}, we obtain
\begin{equation*}
\begin{split}
  D(\xi,t) & \leq C_1'\omega(\xi,\xi_0) - \frac{C_1}{2} \omega(0+,\xi_0) m(\xi^{-1}) \\
  & \leq C_1' \omega(\xi,\xi_0) - \frac{C_1(1-\beta)}{8} \omega(\xi,\xi_0) m(\xi^{-1}) \leq - \frac{C_1(1-\beta) \kappa}{16} \left( m(\delta^{-1})\right)^2
  \delta^{2-\sigma-\beta} \xi^{\beta-1+\sigma},
\end{split}
\end{equation*}
where the last inequality is deduced from using
$A_0 \leq \left( \frac{C_1 m(1) (1-\beta) }{16 C_1'}\right)^{\frac{1}{1-\sigma}}$.
Thus we can similarly obtain \eqref{Targ4} in this case under (slightly modified) \eqref{rkg-cd6}.

Case 6: $0<\xi_0\leq \delta$, $\delta< \xi\leq A_0$.

We have \eqref{eq:OmeEst} with $C_2$ replaced by $C_2+C_2'$. Similarly as \eqref{eq:Dest}, \eqref{eq:Dest2} and \eqref{Dxe-est2}, we get
\begin{align*}
  D(\xi,t) & \leq C_1' \omega(\xi,\xi_0) + C_1 (\omega(2\xi,\xi_0)-2\omega(\xi,\xi_0)) \int_{\frac{\xi}{2}}^\xi \frac{m(\eta^{-1})}{\eta} \dd \eta \\
  & \leq C_1' \omega(\xi,\xi_0) - \frac{C_1 \tilde{c}}{8}  (1-\sigma)  m(\xi^{-1})\omega(\xi,\xi_0)\leq - \frac{C_1 \tilde{c}}{16}  (1-\sigma)  m(\xi^{-1})\omega(\xi,\xi_0),
\end{align*}
where the last inequality is ensured by $A_0\leq  \left( \frac{C_1 \tilde{c}\, m(1) (1-\sigma) }{16 C_1'}\right)^{\frac{1}{1-\sigma}}$. Thus in this case we deduce \eqref{Targ4} under (slightly modified) \eqref{kap-gam-cd1}.

Therefore, gathering the above results concludes \eqref{Targ4} at all cases and thus Theorem \ref{thm:glb2} is followed.


\section{Appendix}\label{sec-app}


\begin{proof}[Justification of the statement in Remark \ref{rmk:glob}]
By using the standard Bony's para-differential calculus and Lemma \ref{lem-symb}, we first can prove the local well-posedness result that
there exists a time $T>0$ depending only on
$\|\theta_0\|_{H^s}$ and $d$ such that the equation \eqref{DDeq}-\eqref{uexp} admits a uniquely local smooth solution
$\theta \in C([0,T[; H^s(\R^d)) \cap C^\infty (]0,T[\times \R^d)$.
Moreover, let $T^*$ be the maximal existence time of this solution, then by Lemma \ref{lem-RC} and the Calder\'on-Zygmund theorem, we necessarily get that
\begin{equation}\label{blcr1}
  \textrm{if}\;\; T^*<\infty\quad \Rightarrow \quad  \|\theta\|_{L^\infty([0,T^*[;\dot C^\beta(\R^d))} =\infty,\quad \forall\beta\in]1-\alpha+\sigma,1[.
\end{equation}

Next we prove that the maximal lifespan solution $\theta(t,x)$ on the time period $[0,T^*[$ strictly preserves the MOC $\omega(\xi)$ given by \eqref{MOC4.3} with $\alpha\in ]0,1]$, $\sigma\in[0,\alpha[$, $\delta>0$,
$0<\gamma<\kappa<1$, which implies the desired uniform $\beta$-H\"older estimate of $\theta$, and further concludes the statement concerned.

Similarly as the deduction around \eqref{MOC2}, $\omega(\xi)$ is a MOC satisfying the needing properties, and the mapping $\xi\mapsto \frac{\omega(\xi)}{\xi^\beta}$ for every $\xi>0$ is non-increasing.

Then we prove that under the assumption \eqref{m-int}, the MOC \eqref{MOC4.3} with fixed $\kappa,\gamma>0$ can be obeyed by the data $\theta_0$
for $\delta$ small enough. For this purpose, noting that
$|\theta_0(x)-\theta_0(y)|\leq 2 \|\theta_0\|_{L^\infty}$, and $|\theta_0(x)-\theta_0(y)|\leq \|\theta_0\|_{\dot C^\beta} |x- y|^\beta$,
it suffices to prove
\begin{equation}\label{egoal1}
  \min\set{2\|\theta_0\|_{L^\infty}, \|\theta_0\|_{\dot C^\beta} \xi^\beta } < \omega(\xi).
\end{equation}
Denote by $a_0 := \left( \frac{2\|\theta_0\|_{L^\infty}}{\|\theta_0\|_{\dot C^\beta}}\right)^{1/\beta}$, and if $\xi \geq a_0$, then as long as
\begin{equation}\label{econd1}
  \omega(a_0)> 2\|\theta_0\|_{L^\infty},
\end{equation}
we have that \eqref{egoal1} holds for all $\xi\geq a_0$; while if $\xi \leq a_0$, by virtue of \eqref{econd1} and the fact
$\frac{\omega(\xi)}{\xi^\beta} \geq \frac{\omega(a_0)}{a_0^\beta}$ which is deduced from \eqref{efact1},
we also obtain \eqref{egoal1}, as the following deduction shows
\begin{equation*}
  \|\theta_0\|_{\dot C^\beta} \xi^\beta \leq \|\theta_0\|_{\dot C^\beta} \frac{a_0^\beta}{\omega(a_0)} \omega(\xi)
  \leq \frac{2\|\theta_0\|_{L^\infty}}{\omega(a_0)} \omega(\xi) <\omega(\xi).
\end{equation*}
Now we prove that for every $\theta_0$, the condition \eqref{econd1} can be guaranteed by the assumption \eqref{m-int}. Indeed, without loss of generality we assume that
$a_0 \geq \delta$, then we get
\begin{equation}\label{efact2}
  \omega(a_0)\geq \gamma \int_\delta^{a_0} m(\eta^{-1})\dd \eta\rightarrow \infty,\quad \textrm{as  $\delta\rightarrow 0+$},
\end{equation}
hence \eqref{econd1} is ensured for $\delta$ sufficiently small depending on $\gamma$ and $\|\theta_0\|_{\dot C^\beta\cap L^\infty}$.

Recalling that $B_0$ is the bound of $\|\theta(\cdot,t)\|_{L^\infty_x}$ given by \eqref{theMP1}-\eqref{theMP2}, and by letting $b_0\in ]\delta,\frac{c_0}{2}]$ be a constant chosen later,
we use \eqref{m-int} to deduce
\begin{equation}\label{efact3}
  \omega\left( b_0\right)\geq \gamma \int_\delta^{b_0} m(\eta^{-1})\dd\eta > 2 B_0,
\end{equation}
provided that $\delta$ is sufficiently small.
Thus according to Proposition \ref{prop-GC}, it suffices to show that for all $0<t<T^*$ and all $0<\xi\leq b_0$,
\begin{equation}\label{Targ6}
  \Omega(\xi,t)\omega'(\xi) + D(\xi,t) <0,
\end{equation}
where
$D(\xi,t)$ and $\Omega(\xi,t)$ (by Lemmas \ref{lem-mocdiss}, \ref{lem-mocdrf}) are respectively given by \eqref{Dxit.0} and \eqref{Omega.0} with $\{\omega(\cdot),b_0\}$ in place of $\{\omega(\cdot,\xi_0),A_0\}$.

In a similar way as the proof of \eqref{Targ4} at Steps 5-6 and the proof of \eqref{Targ1} (noting that $b_0$ plays the same role as $A_0$ in showing \eqref{Targ4}), we find that by setting
$b_0 = \min\Big\{\frac{c_0}{2},\Big( \frac{C_1 m(1) \tilde{c}^2 \beta (\alpha-\sigma)}{16C_1'}\Big)^{1/(\alpha-\sigma)}\Big\}$, and for $\kappa,\gamma$ fixed constants satisfying \eqref{kap-gam-cd1} (up to pure numbers and $C_2$ replaced by $C_2+ C_2'$), and for $\delta>0$ sufficiently small constant satisfying \eqref{efact2}-\eqref{efact3},
we conclude \eqref{Targ6} at all the considered cases. Hence, the statement concerned on the global well-posedness result for \eqref{DDeq}-\eqref{uexp} is justified.

\end{proof}

\vskip0.2cm

\textbf{Acknowledgements.}
C. Miao was supported by National Natural Science Foundation of China (grants Nos. 11671047, 11831004 and 11826005).
L. Xue was supported by National Natural Science Foundation of China (grants Nos. 11401027, 11671039, 11771043).

\end{document}